\newtheorem{thm}{Theorem}[section]
\newtheorem{prop}[thm]{Proposition}
\newtheorem{lem}[thm]{Lemma}
\newtheorem{cor}[thm]{Corollary}
\newtheorem{lemma}[thm]{Lemma}
\newtheorem{example}[thm]{Example}
\newtheorem{remark}[thm]{Remark}
\newtheorem{conj}[thm]{Conjecture}
\newenvironment{mydef}[1][Definition]{\begin{trivlist}
\item[\hskip \labelsep {\bfseries #1}]}{\end{trivlist}}
\newcommand{\m}[1]{\mathcal{#1}}
\newcommand{\mb}{\mathbb}
\newcommand{\F}{\mathbb{F}}
\newcommand{\Z}{\mathbb{Z}}
\newcommand{\N}{\mathbb{N}}
\newcommand{\ra}{\rightarrow}
\newcommand{\mm}{\mathfrak{m}}
\newcommand{\uu}{\# \mathcal{U}}
\begin{document}
\title{On zeros of a polynomial in a finite grid}
\author{Anurag Bishnoi}
\address{Department of Mathematics, Ghent University, Ghent, Belgium}
\email{anurag.2357@gmail.com}
%\thanks{$^\dagger$Corresponding Author}

\author{Pete L. Clark}
\address{Department of Mathematics, University of Georgia, Athens, GA, USA}
\email{plclark@gmail.com}

\author{Aditya Potukuchi}
\address{Department of Computer Science, Rutgers University, NJ, USA}
\email{apotu.57@gmail.com}

\author{John R. Schmitt}
\address{Department of Mathematics, Middlebury College, Middlebury, VT, USA}
\email{jschmitt@middlebury.edu}
%\author{A. Bishnoi, P. L. Clark, A. Potukuchi, J. R. Schmitt}
\maketitle

\begin{abstract}
\noindent
A 1993 result of Alon and F\"uredi gives a sharp upper bound on  the number of zeros of a multivariate polynomial over an integral domain in a finite grid, in terms of the degree of the polynomial.  This result was recently generalized to polynomials over an arbitrary commutative ring, assuming a certain ``Condition (D)'' on the grid which holds vacuously when the ring is a
domain.  In the first half of this paper we give a further Generalized Alon--F\"uredi Theorem which provides a sharp upper bound when the degrees of the polynomial in each variable are also taken into account.  This yields in particular a new proof of Alon--F\"uredi.  We then discuss the relationship between Alon--F\"uredi and results of DeMillo--Lipton, Schwartz and Zippel.    A direct coding theoretic interpretation of Alon--F\"uredi Theorem and its generalization in terms of Reed--Muller type affine variety codes is shown which gives us the minimum Hamming distance of these codes. Then we apply the Alon--F\"uredi Theorem to quickly recover – and sometimes strengthen – old and new results in finite geometry, including the Jamison/Brouwer--Schrijver bound on affine blocking sets.  We end with a discussion of multiplicity enhancements.
%We gave a twofold generalization of this result: Alon--F\"{u}redi theorem by taking into account the degree of the polynomial in each %variable and by taking the coefficients of the polynomial from a commutative ring.
%We establish a connection with coding theory by showing that Alon--F\"{u}redi theorem is equivalent to determining the minimum Hamming %distance of Affine Cartesian Codes \cite{LRMV14}, which generalise the classical Reed--Muller codes.
%We show that the Alon--F\"{u}redi theorem and our result can be seen as a generalization of Schwartz--Zippel lemma and some related %results.
%Finally we give several applications by proving the Jamison/Brouwer--Schrijver bound on affine blocking sets, a lower bound on the number %of hyperplanes through an essential point of a projective blocking set, and a lower bound on the number of points missed by a partial %cover of the finite Desarguesian projective space by hyperplanes.
\end{abstract}

\bigskip \noindent \textbf{Keywords:} polynomial method, Reed--Muller code, Schwartz--Zippel lemma, blocking set, partial cover\\
\textbf{MSC2010:} 05E40, 11T06, 11T71, 51E20, 51E21

\tableofcontents

\section{Introduction}
\subsection{Notation} We denote the positive integers by $\Z^+$ and the non-negative integers by $\N$.  For $n \in \Z^+$, we put $[n] = \{1, 2, \dots, n\}$.
\\ \\
For us, rings are commutative with multiplicative identity. Throughout this paper $R$ denotes a ring and $F$ denotes a field, each arbitrary unless otherwise specified.
\\ \\
Following \cite{Schauz08} and \cite{Clark14}, a nonempty subset $S \subset R$ is said to satisfy \textbf{Condition (D)} if for all $x \neq y \in S$, the element $x - y \in R$ is not a zero divisor.  A \textbf{finite grid} is a subset $A = \prod_{i=1}^n A_i$ of $R^n$ (for some $n \in \Z^+$) with each $A_i$ a finite, nonempty subset of $R$.  We say that $A$ satisfies Condition (D) if each $A_i$ does.
\\ \\
%For $N, a_1, \dots, a_n, b_1, \dots, b_n \in \mb Z^+$, we denote by $\mm(a_1, \dots, a_n; b_1, \dots, b_n; N)$ a certain combinatorial quantity which will be defined in $\S$\ref{sec:balls}.
%When $b_1 = \dots = b_n = 1$ we denote this quantity by $\mm(a_1, \dots, a_n; N)$.
%\\ \\
For $A \subset R^n$ and $f \in R[\underline{t}] = R[t_1,\ldots,t_n]$, we put
\[ Z_A(f) = \{x \in A \mid f(x) = 0 \} \text{ and } \ \mathcal{U}_A(f) = \{x \in A \mid f(x) \neq 0 \}. \]

\subsection{The Alon--F\"uredi Theorem} 
In \cite{Alon--Furedi93} Alon and F\"{u}redi solved a problem posed by B\'{a}r\'{a}ny (based on a result of Komjath) of finding the minimum number of hyperplanes required to cover all points of the hypercube $\{0, 1\}^n \subseteq F^n$ \textit{except one}.  One such covering is given by $n$ hyperplanes defined by the zeros of the polynomials $t_1 -1$, $t_2 -1$, $\ldots$, $t_n -1$.
Alon and F\"{u}redi proved that $n$ is in fact the minimum number.  They then generalised this result to all finite grids $A = \prod_{i=1}^n A_i \subset F^n$, showing that the minimum number of hyperplanes required to cover all points of $A$ except one is $\sum_{i = 1}^n (\#A_i - 1)$.
\\ \\
There is also a quantitative refinement: as we vary over families of $d$ hyperplanes which do not cover all
points of $A$, what is the minimum number of points which are missed?  To answer this, Alon and F\"uredi proved the following result.

\begin{thm}[{Alon--F\"{u}redi Theorem \cite[Thm. 5]{Alon--Furedi93}}]
\label{thm:AF}
Let $F$ be a field, let $A = \prod_{i=1}^n A_i \subset F^n$ be a finite grid, and let $f \in F[\underline{t}] = F[t_1,\ldots,t_n]$ be a polynomial which does not vanish on all points of $A$.  Then $f(x) \neq 0$ for
at least $\min \prod y_i$ elements $x \in A$, where the minimum is taken over all positive integers $y_i \leq  \# A_i$ with $\sum_{i=1}^n y_i = \sum_{i=1}^n \# A_i - \deg f$.  More concisely (see $\S$\ref{sec:balls})
\[ \#\mathcal{U}_A(f) \geq \mm(\# A_1,\ldots,\# A_n;\sum_{i=1}^n \# A_i - \deg f). \]
\end{thm}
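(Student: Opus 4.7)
The plan is to proceed by induction on $n$. For the base case $n=1$, any polynomial $f \in F[t_1]$ that does not vanish on $A_1$ has at most $\deg f$ roots in $F$, hence $\#\mathcal{U}_{A_1}(f) \geq \#A_1 - \deg f$, which matches $\mm(\#A_1; \#A_1 - \deg f) = \#A_1 - \deg f$.

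For the inductive step, I would first carry out a preliminary reduction: replace $f$ by its remainder modulo the ideal generated by the monic polynomials $g_i(t_i) = \prod_{a \in A_i}(t_i - a)$. Since each $g_i$ vanishes identically on $A$, the values of $f$ on $A$ are unchanged; and because each replacement of $t_i^{\#A_i}$ uses only \emph{constant} coefficients, the total degree does not increase. We may therefore assume $\deg_{t_i} f < \#A_i$ for every $i$. A standard consequence of this normalization is that a polynomial of this shape is uniquely determined by its values on $A$; in particular, a coefficient polynomial in fewer variables that is formally nonzero cannot vanish identically on the corresponding subgrid.

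Next, write $f = \sum_{j=0}^{d_n} f_j(t_1,\ldots,t_{n-1})\, t_n^j$ with $d_n = \deg_{t_n} f < \#A_n$, and let $k$ be the largest index for which $f_k \neq 0$ as a formal polynomial. Set $A' = \prod_{i=1}^{n-1} A_i$. Since $\deg f_k \leq \deg f - k$, the inductive hypothesis applied to $f_k$ on $A'$ yields
\[ \#\mathcal{U}_{A'}(f_k) \geq \mm\bigl(\#A_1,\ldots,\#A_{n-1};\ \textstyle\sum_{i=1}^{n-1}\#A_i - (\deg f - k)\bigr). \]
For each $x' \in \mathcal{U}_{A'}(f_k)$, the univariate polynomial $f(x',t_n) \in F[t_n]$ has leading coefficient $f_k(x') \neq 0$, hence has degree exactly $k$, and so is nonzero on at least $\#A_n - k$ elements of $A_n$.

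Combining these bounds,
\[ \#\mathcal{U}_A(f) \geq (\#A_n - k)\cdot \mm\bigl(\#A_1,\ldots,\#A_{n-1};\ \textstyle\sum_{i=1}^{n-1}\#A_i - \deg f + k\bigr). \]
The argument is closed by the purely combinatorial inequality
\[ y \cdot \mm\bigl(\#A_1,\ldots,\#A_{n-1};\ s - y\bigr) \geq \mm\bigl(\#A_1,\ldots,\#A_n;\ s\bigr) \qquad (1 \leq y \leq \#A_n), \]
applied with $y = \#A_n - k$ and $s = \sum_{i=1}^n \#A_i - \deg f$. I expect this ball-dropping identity — essentially the statement that any optimal configuration for $\mm(\#A_1,\ldots,\#A_n;s)$ is obtainable by first fixing the last coordinate $y_n = y$ and then optimising over the remaining $n-1$ coordinates — to be the main technical point, and I anticipate that the material of $\S$\ref{sec:balls} develops exactly the combinatorial properties of $\mm$ needed to make this final step clean.
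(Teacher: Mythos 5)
Your proof is correct and follows essentially the same route as the paper's proof of the Generalized Alon--F\"uredi Theorem (Theorem \ref{thm:main}), from which Theorem \ref{thm:AF} is deduced in $\S$\ref{sec:GAF}: the same induction on $n$ via the top $t_n$-coefficient of the $A$-reduced polynomial, followed by the same bins-merging inequality. You have simply absorbed the grid-reduction step into the induction rather than stating it separately; the combinatorial inequality you invoke at the end is precisely the $b_1=\dots=b_n=1$ case of Lemma \ref{lem:main}, whose range hypothesis $n-1 \leq s-y \leq \sum_{i=1}^{n-1}\#A_i$ is indeed met here because after reduction $\deg f - k \leq \sum_{i=1}^{n-1}\deg_{t_i} f \leq \sum_{i=1}^{n-1}(\#A_i-1)$.
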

\noindent
The minimum referred to in Theorem \ref{thm:AF} is known in all cases -- see Lemma \ref{FSLEMMA}a) -- leading to an explicit form of the bound.
\\ \\
Several proofs of Theorem \ref{thm:AF} have been given.  The original argument in \cite{Alon--Furedi93} involves the construction of auxiliary polynomial functions of low degree via linear algebra.  A second proof was given by Ball and Serra as an application of their Punctured Combinatorial Nullstellensatz \cite{Ball-Serra09}, \cite{Ball-Serra11}.  
Recently, L\'{o}pez, Renter\'{a}-M\'{a}rquez and Villarreal gave a proof of Alon--F\"uredi \cite{LRMV14}, in its coding theoretic formulation (see $\S$\ref{sec:coding_theory}).
Geil had noticed that the minimum distance of generalized Reed--Muller codes can be determined easily using the Gr\"obner basis theory \cite[Theorem 2]{Geil08}.
This technique was then used by Carvalho to give another proof of Theorem \ref{thm:AF} when $F$ is a finite field \cite[Prop. 2.3]{Carvalho13}, which is in fact a special case of an earlier result by Geil and Thomsen \cite[Prop. 5]{Geil-Thomsen13} (take all weights equal to $1$). 

In \cite{Clark15}, Clark generalized the Alon--F\"uredi Theorem by replacing the field $F$ by an arbitrary ring $R$, under the assumption that the finite grid $A$ satisfies Condition (D).  This is a modest generalization in that Condition (D) is exactly what is needed for polynomial functions on $A$ to behave as they do in the case of a field, and the proof adapts that of Ball-Serra.
\\ \\
Clark, Forrow and Schmitt \cite{CFS16} used Alon--F\"uredi to obtain a restricted variable generalization of a theorem of Warning \cite{Warning35} giving a lower bound on the number of zeros of a system of polynomials over a finite field.  (Alon--F\"uredi gives a lower bound on \emph{non-zeros}, but over a finite field $\F_q$,
we have Chevalley's trick: $f(x) = 0 \iff 1-f(x)^{q-1} \neq 0$.)  This work also gave several combinatorial
applications of this lower bound on restricted variable zero sets.
\\ \\
One of the main goals of this paper is to revisit the Alon--F\"uredi Theorem and give direct combinatorial applications (i.e., not of Chevalley--Warning type).
We begin by giving the following generalization of the Alon--F\"uredi Theorem:

\begin{thm}[Generalized Alon--F\"{u}redi Theorem]
\label{thm:main}
Let $R$ be a ring and let $A_1, \dots, A_n$ be non-empty finite subsets of $R$ that satisfy Condition (D).  For $i \in [n]$, let $b_i$ be an integer such that $1 \leq b_i \leq \# A_i$.
Let $f \in R[t_1, \dots, t_n]$ be a non-zero polynomial such that $\deg_{t_i} f \leq \# A_i - b_i$ for all $i \in [n]$.
Let $\m U_A(f) = \{x \in A \mid f(x) \neq 0\}$ where $A = A_1 \times \dots \times A_n \subset R^n$.
Then we have (see $\S$\ref{sec:balls})
\[\# \m U_A(f) \geq \mm(\# A_1, \dots, \# A_n; b_1, \dots b_n; \sum_{i = 1}^n \# A_i - \deg f).\]
Moreover, this bound is sharp in all cases.
\end{thm}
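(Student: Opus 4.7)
The plan is to induct on $n$, in the spirit of the Ball--Serra argument used for Theorem~\ref{thm:AF}, but carrying the per-variable degree bounds $\deg_{t_i} f \leq \# A_i - b_i$ through the recursion. For the base case $n=1$, the polynomial $f \in R[t_1]$ has $\deg f \leq \# A_1 - b_1$, and the standard root-counting argument using Condition (D) on $A_1$ shows that $f$ has at most $\deg f$ zeros in $A_1$. Hence $\#\m{U}_A(f) \geq \# A_1 - \deg f \geq b_1$, which equals $\mm(\# A_1; b_1; \# A_1 - \deg f)$ since the sole admissible value of $y_1$ is forced.

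For the inductive step, view $f$ as an element of $R[t_1,\dots,t_{n-1}][t_n]$ and write
\[ f \ = \ p(t_1,\dots,t_{n-1})\, t_n^{d_n} \ + \ (\text{lower-order terms in } t_n), \]
where $d_n := \deg_{t_n} f \leq \# A_n - b_n$ and $p \neq 0$ is the leading coefficient. Then $\deg_{t_i} p \leq \# A_i - b_i$ for $i < n$ and $\deg p \leq \deg f - d_n$, so the induction hypothesis applied to $p$ on $A' := \prod_{i<n} A_i$ yields
\[ \#\m{U}_{A'}(p) \ \geq \ \mm(\# A_1,\dots,\# A_{n-1};\, b_1,\dots,b_{n-1};\, k'), \qquad k' := \sum_{i<n} \# A_i - \deg p. \]
For each $x' \in \m{U}_{A'}(p)$, the univariate polynomial $f(x',t_n) \in R[t_n]$ has leading coefficient $p(x') \neq 0$ and hence degree exactly $d_n$; Condition (D) on $A_n$ then yields at least $y_n := \# A_n - d_n \geq b_n$ non-roots in $A_n$. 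Summing fiber-wise,
\[ \#\m{U}_A(f) \ \geq \ y_n \cdot \#\m{U}_{A'}(p) \ \geq \ y_n \cdot \mm(\# A_1,\dots,\# A_{n-1};\, b_1,\dots,b_{n-1};\, k'). \]

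To close the induction I need the right-hand side to dominate $\mm(\# A_1,\dots,\# A_n; b_1,\dots,b_n; k)$ with $k := \sum \# A_i - \deg f$. Since $\deg p + d_n \leq \deg f$, one has $k' \geq k - y_n$, and the claim reduces to two properties of $\mm$, to be established in $\S$\ref{sec:balls}: (i) for feasible arguments, $\mm(\cdots;m)$ is non-decreasing in $m$, and (ii) for any feasible $y_n$,
\[ \mm(\# A_1,\dots,\# A_n; b_1,\dots,b_n; k) \ \leq \ y_n \cdot \mm(\# A_1,\dots,\# A_{n-1}; b_1,\dots,b_{n-1}; k - y_n). \]
Part (ii) is immediate from the definition of $\mm$ as a minimum over a product. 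Part (i), which I expect to be the main obstacle, will follow from a swap/exchange argument: given an optimal tuple at level $m > \sum b_i$, some coordinate $y_i$ exceeds $b_i$, so decrementing it produces a feasible tuple at level $m-1$ whose product is smaller by a factor $(y_i-1)/y_i \leq 1$.

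Finally, for sharpness, let $(y_1^*,\dots,y_n^*)$ realise $\mm(\# A_1,\dots,\# A_n; b_1,\dots,b_n; k)$, choose for each $i$ a subset $T_i \subset A_i$ with $\# T_i = \# A_i - y_i^*$, and set
\[ f \ = \ \prod_{i=1}^n \prod_{a \in T_i}(t_i - a). \]
Then $\deg_{t_i} f = \# A_i - y_i^* \leq \# A_i - b_i$, and $\deg f = \sum \# A_i - k$. Because each $A_i$ satisfies Condition (D), every non-zero factor $x_i - a$ (for $x_i \in A_i \setminus\{a\}$) is a non-zero-divisor, and products of non-zero-divisors are non-zero-divisors; hence $f(x) = 0$ iff $x_i \in T_i$ for some $i$, giving $\#\m{U}_A(f) = \prod_i y_i^*$ and matching the lower bound.
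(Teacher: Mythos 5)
Your proof follows essentially the same route as the paper's: induction on $n$, applying the inductive hypothesis to the leading $t_n$-coefficient $f_{d_n}$ of $f$, counting at least $\# A_n - \deg_{t_n} f$ non-roots in each good fibre, and then closing the induction via the product inequality $k\cdot\mm(a_1,\dots,a_{n-1};b_1,\dots,b_{n-1};N-k)\geq\mm(a_1,\dots,a_n;b_1,\dots,b_n;N)$ (the paper's Lemma~\ref{lem:main}) together with monotonicity of $\mm$ in its ball-count argument. The sharpness construction via products of linear factors $\prod_i\prod_{a\in T_i}(t_i-a)$ is also the one used in $\S$\ref{sec:sharp}, so there is nothing to flag beyond the fact that the paper leaves the monotonicity of $\mm$ implicit while you sketch a short exchange argument for it.
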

\noindent
As we shall explain in $\S$\ref{sec:GAF}, one recovers Theorem \ref{thm:AF} from Theorem \ref{thm:main} by taking $b_1 = \ldots = b_n = 1$.  Our argument specializes to give a new proof of Alon--F\"uredi.
\\ \\
In $\S$\ref{sec:SZ_main} we relate the Generalized Alon--F\"uredi Theorem to work of DeMillo--Lipton, Schwartz and Zippel.  We find in particular that Alon--F\"uredi implies the result which has become known as the ``Schwartz--Zippel Lemma''.  In fact, the original result of Zippel (and earlier, DeMillo--Lipton) is a bit different and not implied by Alon--F\"uredi (see Example \ref{EXAMPLE2}).  However, it is implied by Generalized Alon--F\"uredi, and this was one of our motivations for strengthening Alon--F\"uredi as we have.
\\ \\
The Alon--F\"uredi Theorem has a natural  coding theoretic interpretation (see $\S$\ref{sec:coding_theory}) as it computes the minimum Hamming distance of the affine grid code ${\rm AGC}_d(A)$, an $F$-linear code of
length $\# A$.  In this way Alon--F\"uredi turns out to be the restricted variable generalization of a much older result in the case $A_i = F = \F_q$, the
Kasami--Lin--Peterson Theorem, which computes the minimum Hamming distance of generalized Reed--Muller codes.
We will show that the Generalized Alon--F\"uredi Theorem is equivalent to the computation of the minimum Hamming distance of a more general class of $R$-linear codes.   These generalized affine grid codes have larger distance (though also smaller dimension) than the standard ones, so they may turn out to be useful.
\\ \\
In $\S$\ref{sec:finite_geometry}, we pursue applications to finite geometry.  We begin by revisiting and slightly sharpening the original result of Alon--F\"uredi on hyperplane coverings.  This naturally leads us to partial covers and blocking sets in affine and projective geometries over $\F_q$.  Applying Alon--F\"uredi and projective duality we get a new upper bound, Theorem \ref{thm:affine}, on the number of hyperplanes which do not meet a $k$-element subset of $\mathrm{AG}(n,q)$.  From this result the classical theorems of Jamison--Brouwer--Schrijver on affine blocking sets and Blokhuis--Brouwer on essential points of projective blocking sets follow as corollaries.  We are also able to strengthen a recent result of Dodunekov, Storme and Van de Voorde.
\\ \\
Finally, in $\S$\ref{sec:mult} we discuss multiplicity enhancements in the sense of \cite{DKSS13}.  The material here is most closely related to that of $\S$\ref{sec:SZ_main}, but we have placed it at the end because it has a somewhat more technical character than the rest of the paper.  %Already the definition of multiplicity in the %multivariable case involves some preliminary %machinery.

%When $b_1 = \dots = b_n = 1$, then we show that %Theorem \ref{thm:main} implies Theorem \ref{thm:AF} %(see Section \ref{sec:grid}).
%Moreover,  our proof of Theorem \ref{thm:main} in %Section \ref{sec:proof}  is much simpler than the %original proof by Alon and F\"{u}redi.
%This proof also simplifies the original proofs of %some of the results in coding theory and finite %geometry that will follow.
%We discuss these in Section \ref{sec:coding_theory} %and \ref{sec:finite_geometry}.
%In Section \ref{sec:finite_geometry} we give some new %results on partial covers and blocking sets of finite %projective spaces.
%A lower bound on the number of non-zeros of a %polynomial in a grid gives us an upper bound on the %number of zeros of the polynomial in that grid.
%This observation shows that the results of Schwartz, %Zippel, DeMillo and Lipton are simply special cases %of our bound (see Section \ref{sec:SZ}).

\subsection{Acknowledgements}
We wish to thank Olav Geil and Fedor Petrov for helpful conversations and useful pointers to the literature.  

\section{Preliminaries}

\subsection{Balls in Prefilled Bins}
\label{sec:balls}
%We recall the balls in bins setup which leads to the %quantity $\mm(a_1,\ldots,a_n;N)$ appearing in the %Alon--F\"uredi Theorem and then give a mild %generalization to ``prefilled bins'' which leads to
%the quantity $\mm(a_1,\ldots,a_n;b_1,\ldots,b_n;N)$.  %\\ \\
Let $a_1, \dots, a_n \in \Z^+$.  Consider $n$ bins $A_1, \dots, A_n$ such that $A_i$ can hold up to $a_i$ balls.  For $N \in \Z^+$ with $n \leq N \leq \sum_{i=1}^n a_i$, we define a distribution of $N$ balls in these $n$ bins to be an $n$-tuple $y = (y_1, \dots, y_n) \in (\Z^+)^n$ with $y_i \leq a_i$ for all $i \in [n]$ and $\sum_{i=1}^n y_i = N$.
For a distribution $y$ of $N$ balls in $n$ bins, we put $P(y) = \prod_{i=1}^n y_i$.
For $n \leq N \leq \sum_{i=1}^n a_i$ we define $\mm(a_1, \dots, a_n; N)$ to be the minimum value of $P(y)$ as $y$ ranges over all such distributions of $N$ balls in $n$ bins.  For $N < n$ we define $\mm(a_1, \dots, a_n; N) = 1$.
\\ \\
Without loss of generality we may -- and shall -- assume $a_1 \geq \dots \geq a_n$.  We define the
\textbf{greedy distribution} $y_G = (y_1,\ldots,y_n)$ as follows: first place one ball in each bin; then place the remaining balls into bins from left to right, filling each bin completely before moving on to the next bin, until we run out of balls.

\begin{lemma}
\label{lem:FS}
\label{FSLEMMA}
Let $n \in \mb Z^+$, and let $a_1 \geq \dots \geq a_n$ be positive integers.
Let $N \in \Z$ with $n \leq N \leq a_1 + \dots + a_n$.  
\begin{enumerate}[$(a)$]
\item We have
\[ \mm(a_1,\ldots,a_n;N) = P(y_G) = y_1 \cdots y_n. \]
\item Suppose $a_1 = \dots = a_n = a \geq 2$.  Then
\[ \mm(a,\ldots,a;N) = (r+1)a^{\left\lfloor \frac{N-n}{a-1} \right\rfloor}, \]
where $r \equiv N-n \pmod{a-1}$ and $0 \leq r < a-1$.  
\item For all non-negative integers $k$, we have
\[\mm(2,\ldots,2;2n-k) = 2^{n-k}. \]
\item Let $n, a_1,\ldots,a_n \in \Z^+$ with $a_1 \geq \dots \geq a_n$.
Let $N \in \Z$ be such that $N - n = \sum_{i = 1}^j (a_i - 1) + r$ for some $j \in \{0, \dots, n\}$ and some $r$ satisfying $0 \leq r < a_{j+1}$.
Then $\mathfrak{m}(a_1, \dots, a_n; N) = (r+1)\prod_{i = 1}^j a_i$.
\end{enumerate}
\end{lemma}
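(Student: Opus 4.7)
The plan is to establish part (a) by an exchange argument; parts (b), (c), and (d) will then follow by reading off the greedy distribution. Assume throughout that $a_1 \geq \cdots \geq a_n$.

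For (a), I will show that any distribution $y \neq y_G$ admits a rebalancing move that does not increase $P(y)$ and strictly decreases the weighted sum $\Phi(y) := \sum_{\ell=1}^n \ell \cdot y_\ell$. Since $\Phi$ takes positive integer values, iterating the move must terminate; moreover the only distributions admitting no such move are those for which no pair $(i,j)$ with $i<j$, $y_i < a_i$, $y_j > 1$ exists. Letting $j^\star := \max\{\ell : y_\ell > 1\}$, the absence of such a pair forces $y_\ell = a_\ell$ for $\ell < j^\star$ and $y_\ell = 1$ for $\ell > j^\star$, which is exactly $y_G$. To define the move, given such indices $i<j$, set $k := \min(a_i - y_i,\ y_j - 1) \geq 1$ and replace $(y_i, y_j)$ by $(y_i + k, y_j - k)$. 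Direct expansion gives
\[ (y_i + k)(y_j - k) - y_i y_j \;=\; k(y_j - y_i - k). \]
If $k = a_i - y_i$ then $y_j - y_i - k = y_j - a_i \leq 0$, using $y_j \leq a_j \leq a_i$; if $k = y_j - 1$ then $y_j - y_i - k = 1 - y_i \leq 0$. Hence $P$ does not increase under the move, while $\Phi$ drops by $k(j-i) \geq 1$, which gives $P(y_G) \leq P(y)$ for all valid $y$ and completes part (a).

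Parts (b), (c), and (d) then amount to writing down the greedy. For (d), with $N - n = \sum_{i=1}^j (a_i - 1) + r$ and $0 \leq r < a_{j+1}$, the greedy distribution is $(a_1, \ldots, a_j, r+1, 1, \ldots, 1)$, giving $P(y_G) = (r+1)\prod_{i=1}^j a_i$. Part (b) is the specialization $a_1 = \cdots = a_n = a$: then $N - n = q(a-1) + r$ with $q = \lfloor (N-n)/(a-1) \rfloor$ and $0 \leq r < a - 1$, yielding $(r+1) a^q$. Part (c) is the further specialization to $a = 2$, where $a - 1 = 1$ forces $r = 0$ and $q = N - n = n - k$, so $\mm = 2^{n-k}$.

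The main obstacle is choosing the right size for the rebalancing move in part (a). A naive single-ball swap can strictly \emph{increase} $P(y)$ whenever $y_j > y_i + 1$; the fix is to move enough balls in one step to either saturate bin $i$ or reduce bin $j$ to $1$, and the sortedness hypothesis $a_i \geq a_j$ is precisely what ensures one of these extreme moves is product-non-increasing.
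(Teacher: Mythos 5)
Your proposal is correct, but it takes a genuinely different route from the paper on the main point. For parts (a)--(c) the paper simply cites \cite[Lemma 2.2]{CFS16} and proves nothing; for part (d) it just reads off the greedy distribution, which is essentially what you do. You, by contrast, prove (a) from scratch via an exchange argument, and this is where the real content lies. Your choice of move size $k = \min(a_i - y_i,\, y_j - 1)$ is the right idea: it pushes one coordinate to an extreme, and in each of the two extreme cases the sortedness $a_i \ge a_j$ (respectively the constraint $y_i \ge 1$) makes the product non-increasing, sidestepping the failure of a single-ball transfer that you correctly flag. The potential $\Phi(y) = \sum_\ell \ell\, y_\ell$ is integer-valued, bounded below, and strictly decreases under the move, so the process terminates at a move-free distribution, which you identify as $y_G$. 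One small point worth tightening: a move-free distribution with $\sum y_\ell = N$ has the shape $(a_1,\dots,a_{m-1},s,1,\dots,1)$, but the index $m$ and the value $s$ are not literally unique when some intermediate $a_\ell$ equals $1$ (one can shift a trailing $a_\ell = 1$ into the block of $1$'s, or vice versa). This does not affect the conclusion because all such parameterizations describe the same tuple $y$, hence the same product, but the phrase ``which is exactly $y_G$'' deserves a sentence acknowledging this reparameterization. Net comparison: your version makes the lemma self-contained at the cost of a somewhat longer argument; the paper's version is shorter but non-self-contained, and your derivations of (b), (c), (d) from (a) agree with the paper's treatment of (d).
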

\begin{proof}
Parts $(a)$ through $(c)$ are \cite[Lemma 2.2]{CFS16}.  $(d)$ After placing one ball in each bin we are left with $N - n$ balls.  The greedy distribution is achieved by filling the first $j$ bins entirely and then putting $r$ balls in bin $j+1$.
\end{proof}
\noindent
In every distribution $y = (y_1,\dots,y_n)$ we need $y_i \geq 1$ for all $i \in [n]$; i.e., we must place at least one ball in each bin.  So it is reasonable to think of the bins coming \textbf{prefilled} with one ball each, and then our task is to distribute the $N-n$ remaining balls
so as to minimize $P(y)$.  The concept of prefilled bins makes sense more generally: given any
$b_1,\dots,b_n \in \Z$ with $1 \leq b_i \leq a_i$, we may consider the scenario in which the $i$-th bin comes prefilled with $b_i$ balls.  If $\sum_{i=1}^n b_i \leq N \leq \sum_{i=1}^n a_i$, we may restrict to distributions $y = (y_1,\dots,y_n)$ of $N$ balls into bins of sizes $a_1,\dots,a_n$ such that $b_i \leq y_i \leq a_i$ for all $i \in [n]$ and put
\[ \mm(a_1,\dots,a_n;b_1,\dots,b_n;N) = \min P(y), \]
where the minimum ranges over this restricted set of
distributions.
For $N < \sum_{i = 1}^n b_i$ we define $\mm(a_1, \dots, a_n; b_1, \dots, b_n; N) := \prod_{i = 1}^n b_i$.
%If $N \leq \sum_{i=1}^n b_i$ we set
%\[ \mm(a_1,\dots,a_n;b_1,\dots,b_n;N) = %\prod_{i=1}^n b_i. \]
%\\ \\
%Here are three immediate but useful observations:
%\begin{equation}
%\label{EASYEQ1}
 %\mm(a_1,\dots,a_n;1,\dots,1;N) = %\mm(a_1,\dots,a_n;N),
%\end{equation}
%\begin{equation}
%\label{EASYEQ2}
% \mm(a_1,\dots,a_n;b_1,\dots,b_n;N) \geq %\mm(a_1,\dots,a_n;N),
%\end{equation}
%\begin{equation}
%\label{EASYEQ3}
%\text{If } N_1 \geq N_2 \text{ then } %\mm(a_1,\dots,a_n;b_1,\dots,b_n;N_1) \geq %\mm(a_1,\dots,a_n;b_1,\dots,b_n;N_2).
% \end{equation}

\begin{lemma}
\label{LEMMA2.2}
We have $\mm(a_1,\dots,a_n;b_1,\dots,b_n;N) = \prod_{i = 1}^n b_i \iff N \leq \sum_{i=1}^n b_i$.
\end{lemma}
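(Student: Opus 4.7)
The plan is to split the biconditional into its two directions and handle each by a direct unpacking of the definition of $\mm(a_1,\dots,a_n;b_1,\dots,b_n;N)$.

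For the $(\Leftarrow)$ direction I would break into two subcases according to whether $N < \sum_{i=1}^n b_i$ or $N = \sum_{i=1}^n b_i$. The first subcase is immediate: by the definitional extension given just before the lemma, $\mm$ equals $\prod_{i=1}^n b_i$ by fiat. The second subcase is also easy: the constraints $b_i \leq y_i \leq a_i$ together with $\sum y_i = \sum b_i$ force $y_i = b_i$ for every $i$, so the only admissible distribution is $(b_1,\dots,b_n)$ and $\mm = P(b_1,\dots,b_n) = \prod_{i=1}^n b_i$.

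For the $(\Rightarrow)$ direction I would argue the contrapositive: assuming $N > \sum_{i=1}^n b_i$ (while still $N \leq \sum_{i=1}^n a_i$ so that $\mm$ is defined via the minimum), I will show $\mm > \prod_{i=1}^n b_i$, which in particular rules out equality. Let $y = (y_1,\dots,y_n)$ be any admissible distribution. Since $\sum y_i = N > \sum b_i$ and $y_i \geq b_i$ for every $i$, there must exist some index $j$ with $y_j \geq b_j + 1$. Using $y_i \geq b_i \geq 1$ for the remaining coordinates, we get
\[ P(y) = \prod_{i=1}^n y_i \;\geq\; (b_j+1)\prod_{i \neq j} b_i \;=\; \prod_{i=1}^n b_i + \prod_{i \neq j} b_i \;>\; \prod_{i=1}^n b_i, \]
where the last strict inequality uses $b_i \geq 1$ for all $i$ so that $\prod_{i \neq j} b_i \geq 1$. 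Taking the minimum over all admissible $y$ preserves the strict inequality, giving $\mm > \prod_{i=1}^n b_i$.

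I do not foresee a real obstacle here; the statement is essentially a bookkeeping lemma. The only subtlety worth being careful about is the bifurcated nature of the definition of $\mm$ (defined by fiat when $N < \sum b_i$ and as a minimum when $\sum b_i \leq N \leq \sum a_i$), which is why the $(\Leftarrow)$ direction needs the two subcases and why in $(\Rightarrow)$ one should implicitly assume $N \leq \sum a_i$ so that the minimum is actually taken over a nonempty set. Everything else is a one-line inequality.
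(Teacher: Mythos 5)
Your proof is correct and follows essentially the same route as the paper's: the $(\Leftarrow)$ direction splits into the definitional case $N < \sum b_i$ and the forced case $N = \sum b_i$, and the $(\Rightarrow)$ direction shows every admissible $P(y)$ strictly exceeds $\prod b_i$ when $N > \sum b_i$. You merely spell out the strict inequality (via the index $j$ with $y_j \geq b_j + 1$) that the paper asserts in one line.
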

\begin{proof}
If $N \leq \sum_{i=1}^n b_i$ then $\mm(a_1,\dots,a_n;b_1,\dots,b_n;N) = \prod_{i=1}^n b_i$ by definition unless $N = \sum_{i=1}^n b_i$, and this case is immediate: we have exactly enough balls to perform the prefilling.  If $N > \sum_{i=1}^n b_i$, then $\mm(a_1,\dots,a_n;b_1,\dots,b_n;N)$ is
the minimum over a set of integers each of which is strictly greater than $\prod_{i=1}^n b_i$.
\end{proof}
\noindent
In this prefilled context, the greedy distribution $y_G$ is defined by starting with the bins prefilled with $b_1,\dots,b_n$ balls and then distribute the remaining balls from left to right, filling each bin completely before moving on to the next bin.  One sees, e.g. by adapting the argument of \cite[Lemma 2.2]{CFS16}, that
\[ \mm(a_1,\dots,a_n;b_1,\dots,b_n;N) = P(y_G) \]
when we also have $b_1 \geq \dots \geq b_n$.
But this may not hold in general, as the following example shows.
 %(We have already reordered so as to enforce $a_1 %\geq \dots a_n$; simultaneously monotonicity of the %$b_i$'s cannot always be ensured.)

\begin{example}
\label{EXAMPLE1}
Let $n = 2$, $a_1 = 4$, $a_2 = 3$, $b_1 = 1$, $b_2 = 2$, $N = 4$.  Then $P(y_G) = 4$ but $\mm(4,3;1,2;4) = 3$ achieved by the distribution $(1, 3)$.
\end{example}
\noindent
In general, we do not know a simple description of $\mm(a_1,\dots,a_n;b_1,\dots,b_n;N)$.  In practice, it can be computed using dynamic programming.
%\\ \\
%The quantity $\mm(a_1,\dots,a_n;b_1,\dots,b_n;N)$ %appears in the conclusion of our Generalized %Alon--F\"uredi Theorem.  The following result
%will be used in the proof.

\begin{lem}
\label{lem:main}
Let $a_1, \dots, a_n, b_1, \dots, b_n \in \Z^+$
with $1 \leq b_i \leq a_i$ for all $i \in [n]$.
Let $k \in \mb Z$ such that $b_n \leq k \leq a_n$.
If \[b_1 + \dots + b_{n-1} \leq N - k \leq a_1 + \dots + a_{n-1}\] for some $N \in \mb Z$, then
\[k \cdot \mm(a_1, \dots, a_{n-1}; b_1, \dots, b_{n-1}; N - k) \geq \mm(a_1, \dots, a_n; b_1, \dots, b_n; N).\]
\end{lem}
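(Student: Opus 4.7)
The plan is to prove this essentially from unpacking the definition: the left-hand side is the product of $k$ with the minimum product over $(n-1)$-bin distributions summing to $N-k$, and any such distribution can be extended to an $n$-bin distribution summing to $N$ by appending $k$ as the final coordinate. Since $\mm(a_1,\dots,a_n;b_1,\dots,b_n;N)$ is the minimum of $P(y)$ over \emph{all} valid $n$-bin distributions, it is bounded above by the product of this particular extended distribution.

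First, I would verify that both $\mm$ values are genuinely attained as minima over nonempty sets of distributions. The hypothesis $\sum_{i=1}^{n-1} b_i \leq N-k \leq \sum_{i=1}^{n-1} a_i$ ensures that the $(n-1)$-bin problem has at least one valid distribution (when $N-k = \sum_{i=1}^{n-1} b_i$ one takes $y_i = b_i$, and in this case the convention $\mm = \prod b_i$ still matches the product of that distribution). Combined with $b_n \leq k \leq a_n$, this also gives $\sum_{i=1}^n b_i \leq N \leq \sum_{i=1}^n a_i$, so the $n$-bin $\mm$ is likewise a minimum over a nonempty set of distributions.

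Next, I would pick a distribution $(y_1,\dots,y_{n-1})$ achieving $\mm(a_1,\dots,a_{n-1};b_1,\dots,b_{n-1};N-k)$, so that $b_i \leq y_i \leq a_i$ for $i \in [n-1]$ and $\sum_{i=1}^{n-1} y_i = N-k$. Setting $y_n := k$, the tuple $(y_1,\dots,y_{n-1},k)$ satisfies $b_i \leq y_i \leq a_i$ for all $i \in [n]$ (using $b_n \leq k \leq a_n$) and sums to $N$, hence it is a valid $n$-bin distribution. Therefore
\[
k \cdot \mm(a_1,\dots,a_{n-1};b_1,\dots,b_{n-1};N-k) = k \cdot \prod_{i=1}^{n-1} y_i = \prod_{i=1}^n y_i \geq \mm(a_1,\dots,a_n;b_1,\dots,b_n;N),
\]
which is the claimed inequality.

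There is no real obstacle here; the only subtlety is bookkeeping the boundary case $N - k = \sum_{i=1}^{n-1} b_i$, where the $(n-1)$-bin $\mm$ is defined by convention as $\prod_{i=1}^{n-1} b_i$ rather than as a minimum — but in that case the unique candidate distribution $(b_1,\dots,b_{n-1})$ realizes this value, so the argument above still applies verbatim.
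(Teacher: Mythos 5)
Your argument is correct and is essentially the paper's own proof: extend a distribution of $N-k$ balls in the first $n-1$ bins by appending $k$ in the last bin, observe it is a valid $n$-bin distribution, and compare products. The only cosmetic difference is that you pick a minimizing $(n-1)$-bin distribution while the paper quantifies over all of them before taking the minimum; the content is the same.
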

\begin{proof}
Let $y' = (y_1,\dots,y_{n-1})$ be a distribution of $N-k$ balls in the first $n-1$ bins.
Then $y = (y_1,\dots,y_{n-1},k)$ is a distribution of $N$ balls in $n$ bins with the last bin getting $k$ balls.
Therefore,
\[ \mm(a_1,\dots,a_n;b_1,\dots,b_n;N) \leq P(y) = k \cdot P(y'). \]
Since this holds for all such distributions $y'$, we get
\[ \mm(a_1,\dots,a_n;b_1,\dots,b_n;N) \leq k \cdot \mm(a_1,\dots,a_{n-1};b_1,\dots,b_{n-1};N-k). \qedhere\]
\end{proof}

\subsection{Grid Reduction and Condition (D)}
\label{sec:grid}
%Let $R$ be a commutative ring and $n \in \Z^+$.  By a %\textbf{grid} in $R^n$ we mean a subset of the form
%$A = \prod_{i=1}^n A_i$, in which each $A_i \subset %R$ is finite and nonempty.  We say $A$ satisfies %\textbf{Condition (D)} if each $A_i$ does, which %recall means that for all $x_i \neq y_i \in A_i$,
%$x_i - y_i$ is not a zero-divisor (and thus the %condition is vacuous when $R$ is a domain).
%\\ \\
%Condition (D) was introduced by U. Schauz %\cite{Schauz08}.  It was revisited in \cite[$\S %3$]{Clark14}, which contains proofs of all assertions %made in this section.  Condition (D) is necessary and %sufficient for the interplay between polynomials and %polynomial functions on $A$ to take the simplest %possible form, as we now recall.
%\\ \\
For any finite grid $A \subset R^n$, evaluation of a polynomial $f \in R[\underline{t}] = R[t_1,\dots,t_n]$ at elements of $A$ gives a ring homomorphism
\[ E_A: R[\underline{t}] \ra R^A, \ f \mapsto (x \in A \mapsto f(x)). \]
Let $I(A)$ be the kernel of $E_A$, i.e., the set of polynomials which vanish identically on $A$.  There are some ``obvious'' elements of $I(A)$, namely
\[ \forall i \in [n], \ \varphi_i = \prod_{x_i \in A_i} \left(t_i - x_i\right). \]
Let $\Phi  = \langle \varphi_1,\dots,\varphi_n \rangle$ be the ideal generated by these elements.  Then $\Phi \subset I(A)$.
\\ \\
We say a polynomial $f \in R[\underline{t}]$ is \textbf{A-reduced} if $\deg_{t_i}(f) < \# A_i$ for all $i \in [n]$.  The $A$-reduced polynomials form an $R$-submodule $\mathcal{R}_{A}$ of $R[\underline{t}]$ which is free of rank $\# A$, and the composite map
\[ \mathcal{R}_A \hookrightarrow R[\underline{t}] \ra R[\underline{t}]/\Phi \]
is an $R$-module isomorphism \cite[Prop.~10]{Clark14}, i.e., every polynomial $f \in R[\underline{t}]$ differs from a unique $A$-reduced polynomial $r_{A}(f)$ by an element of $\Phi$, and we have $E_A(f) = E_A(r_A(f))$.
The polynomial $r_A(f)$ can be computed from $f$ by dividing by $\varphi_1$, then dividing the remainder by $\varphi_2$, and so on. It follows that $\deg r_A(f) \leq \deg f$ and $\deg_{t_i} r_A(f) \leq \deg_{t_i} f$ for all $i \in [n]$.

\begin{thm}[{CATS\footnote{CATS = Chevalley-Alon-Tarsi-Schauz \cite{Chevalley35}, \cite{Alon-Tarsi92}, \cite{Schauz08}.} Lemma, \cite[Thm.~12]{Clark14}}]
The following are equivalent: \\
(i) The finite grid $A$ satisfies Condition (D). \\
(ii) If $f \in \mathcal{R}_A$ and $f(x) = 0$ for all
$x \in A$, then $f = 0$. \\
(iii) We have $\Phi = I(A)$.
\end{thm}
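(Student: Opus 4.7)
The plan is to establish (ii) $\Leftrightarrow$ (iii) formally from the stated grid reduction, and then prove (i) $\Leftrightarrow$ (ii) separately, with the main content residing in (i) $\Rightarrow$ (ii). For the first equivalence, every $f \in R[\underline{t}]$ decomposes uniquely as $f = g + r_A(f)$ with $g \in \Phi$ and $r_A(f) \in \mathcal{R}_A$, and $\Phi \subseteq I(A)$. Assuming (ii), if $f \in I(A)$ then $r_A(f) = f - g \in I(A) \cap \mathcal{R}_A$, forcing $r_A(f) = 0$ and $f \in \Phi$, which gives (iii). Conversely, under (iii) any $A$-reduced $f \in I(A)$ lies in $\Phi$; since both $f$ and $0$ are $A$-reduced representatives of $f \bmod \Phi$, uniqueness forces $f = 0$.

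For (i) $\Rightarrow$ (ii) I would induct on $n$. The base case $n = 1$ is the crux: given $f \in R[t_1]$ of degree less than $k = \#A_1$ vanishing on $A_1 = \{a_1, \dots, a_k\}$, I would use the division algorithm to write $f = (t_1 - a_1)\, q_1$, evaluate at $a_2$ to obtain $(a_2 - a_1)\, q_1(a_2) = 0$, and invoke Condition (D) to conclude that $a_2 - a_1$ is a non-zero-divisor, so $q_1(a_2) = 0$ and $(t_1 - a_2) \mid q_1$. Iterating this peeling yields $f = \varphi_1 \cdot q_k$; since $\varphi_1$ is monic of degree $k > \deg f$, a degree comparison via the leading coefficient (which is preserved because $\varphi_1$ is monic) forces $q_k = 0$ and hence $f = 0$. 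For the inductive step, I would expand $f \in \mathcal{R}_A$ in powers of $t_n$, writing $f = \sum_{j=0}^{\#A_n - 1} g_j(t_1, \dots, t_{n-1})\, t_n^j$ with each $g_j$ being $A'$-reduced for $A' = \prod_{i<n} A_i$. For any fixed $x' \in A'$, the univariate polynomial $f(x', t_n) \in R[t_n]$ has degree less than $\#A_n$ and vanishes on $A_n$, so the base case gives $g_j(x') = 0$ for every $j$. Then each $g_j$ vanishes identically on $A'$ and is $A'$-reduced, so the inductive hypothesis (applicable since $A'$ inherits Condition (D)) gives $g_j = 0$, hence $f = 0$.

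For (ii) $\Rightarrow$ (i) I would prove the contrapositive by explicit construction. If Condition (D) fails, say $a \neq a'$ in $A_1$ with $r(a - a') = 0$ for some nonzero $r \in R$, take $f = r \prod_{b \in A_1 \setminus \{a\}} (t_1 - b) \in R[\underline{t}]$. Then $\deg_{t_1} f = \#A_1 - 1$ with leading coefficient $r \neq 0$, so $f$ is a nonzero $A$-reduced polynomial; and $f$ vanishes on all of $A$, since at a point with first coordinate in $A_1 \setminus \{a\}$ the corresponding factor kills the product, while at a point with first coordinate $a$ the factor $(a - a')$ is present and is annihilated by the scalar $r$. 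The hard part will be the base case of the induction in (i) $\Rightarrow$ (ii), where Condition (D) combines with the monicity of the linear factors $(t_1 - a_j)$ to mimic the factor theorem over a domain; once that is in place, the multivariate step and the explicit construction for the converse are largely bookkeeping.
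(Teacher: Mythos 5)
The paper itself does not prove this theorem; it states it with a citation to \cite[Thm.~12]{Clark14}, so there is no in-paper proof to compare against. Your argument is correct and follows the standard route one finds in that reference and in the Combinatorial Nullstellensatz literature: (ii) $\Leftrightarrow$ (iii) falls directly out of the $R$-module decomposition $R[\underline{t}] = \Phi \oplus \mathcal{R}_A$; (i) $\Rightarrow$ (ii) reduces to the univariate case, where monic division by $t_1-a_1$ (valid over any ring) plus Condition (D) lets you peel off linear factors one at a time, and the leading-coefficient comparison against the monic $\varphi_1$ forces the quotient to vanish; and (ii) $\Rightarrow$ (i) is the explicit contrapositive construction $f = r\prod_{b \in A_1\setminus\{a\}}(t_1-b)$, which is nonzero ($r$ is its leading coefficient in $t_1$) yet vanishes on all of $A$ because the factor $a-a'$ appears and is annihilated by $r$. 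The one point worth stating explicitly in the base case is that the remainder on dividing $f$ by $t_1 - a_1$ is the constant $f(a_1)$, which is $0$ by hypothesis; you implicitly use this but never say it. That is a presentational nit, not a gap. The proof is complete.
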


\begin{remark}
These results can be directly used to solve the main problem studied by Alon and F\"{u}redi.
Let $f$ be a polynomial that vanishes on all points of $A$ except the point $x = (x_1, \dots, x_n)$.
Since the polynomial $\prod_{i = 1}^n \prod_{\lambda \in A_i \setminus \{x_i\}} (t_i - \lambda)$ is $A$-reduced and it vanishes everywhere on $A$ except at $x$, it must be equal to $r_A(f)$.
Thus, \[\deg f \geq \deg r_A(f) = \sum_{i=1}^n (\# A_i - 1). \]
Now associate the set of hyperplanes that cover all points of $A$ except one by the product of their corresponding linear polynomials.
%This result on polynomials vanishing at all points of $A$ except one also leads to a proof of Chevalley--Warning theorem
\end{remark}

\subsection{Generalized Alon--F\"uredi Implies Alon--F\"uredi}
\label{sec:GAF}
Let $A = \prod_{i=1}^n A_i \subset R^n$ be a finite grid satisfying Condition (D), and for $i \in [n]$ put $a_i = \# A_i$.  Suppose $f \in R[\underline{t}]$ does not vanish identically on $A$.   Let
\[ \mathcal{U}_A(f) = \{x \in A \mid f(x) \neq 0\}. \]
Then the Alon--F\"uredi Theorem is the assertion that
\[ \# \mathcal{U}_A(f) \geq \mm(a_1,\dots,a_n;\sum_{i=1}^n a_i - \deg f). \]
The nonvanishing hypothesis on $f$ is equivalent to
$r_A(f) \neq 0$.  Then $r_A(f)$ satisfies the hypotheses of Theorem \ref{thm:main} with $b_1 = \dots = b_n = 1$.  Since $E_A(f) = E_A(r_A(f))$,
we have $\mathcal{U}_A(f) = \mathcal{U}_A(r_A(f))$, and thus
\[ \# \mathcal{U}_A(f) \geq \mm(a_1,\dots,a_n;1,\dots,1;\sum_{i=1}^n a_i - \deg r_A(f)) \] \[ = \mm(a_1,\dots,a_n;\sum_{i=1}^n a_i - \deg r_A(f)) \geq \mm(a_1,\dots,a_n;\sum_{i=1}^n a_i - \deg f). \]

%\begin{remark}
%What we actually proved is that under the hypotheses %of Alon--F\"uredi,
%\[ \# \mathcal{U}_A \geq %\mm(a_1,\dots,a_n;\sum_{i=1}^n a_i - \deg r_A(f)). %\]
%This is stronger than the usual statement, but %follows from it via grid reduction.  \\ \indent
%The point of grid reduction is that without changing %the associated polynomial function we may impose the %local degree conditions $\deg_{t_i} f \leq \# A_i - %1$.  This motivates us to pursue lower bounds on $\# %\mathcal{U}_A$ in terms of $\deg f$ together with %upper bounds on $\deg_{t_i} f$ which are stronger %than those implied by grid reduction, say $\deg_{t_i} %f \leq \# A_i - b_i$ for some $1 \leq b_i \leq \# %A_i$. That better bounds can be attained under these %additional restrictions is precisely the Generalized %Alon--F\"uredi Theorem.
%\end{remark}

\section{Proof of the Generalized Alon--F\"{u}redi Theorem}

\subsection{A Preliminary Remark}
If $f$ satisfies the hypotheses of the Generalized Alon--F\"uredi Theorem, then
\[ \deg f \leq \sum_{i=1}^n \deg_{t_i} f \leq \sum_{i=1}^n (a_i- b_i),  \]
so
\[ \sum_{i=1}^n b_i \leq \sum_{i=1}^n a_i - \deg f \leq \sum_{i=1}^n a_i. \]
Thus, whereas the conventional Alon--F\"uredi setup allows the case in which we have too few balls to fill the bins -- in which case the result gives the trivial (but sharp!) bound $\# \mathcal{U}_A(f) \geq 1$ --,
in our setup we do not need to consider this case.

\subsection{Proof of the Generalized Alon--F\"uredi Bound}
\label{sec:proof}
For $i \in [n]$, put $a_i = \# A_i$.
We go by induction on $n$.  \\
\textit{Base Case}: Let $f \in R[t_1]$ be a nonzero polynomial.  Suppose $f$ vanishes precisely at the distinct elements $x_1,\dots,x_k$ of $A_1$.  Dividing $f$ by $t_1-x_1$ shows $f = (t_1-x_1)f_1(t_1)$, and -- since $A_1$ satisfies Condition (D) -- $f_1(x_i) = 0$ for $2 \leq i \leq k$.  Continuing in this way we get $f = \prod_{i=1}^k (t_1-x_i) f_k(t_1)$, and thus $\deg f \geq k$.  So
\[ \# \mathcal{U}_A(f) = a_1 - k \geq a_1 - \deg f, \]
which is the conclusion of the Generalized Alon--F\"uredi Theorem in this case. \\
\textit{Induction Step}: Suppose $n \geq 2$ and the result holds for $n-1$.
%Since for all $i \in [n]$ we have $\deg_{t_i} f \leq a_i - b_i < a_i = \# A_i$, $f$ is $A$-reduced.
%By Theorem \ref{CATS}, since $f \neq 0$ we have  $f|_A \not \equiv 0$.
Write \[ f(t_1, \dots, t_n) = \sum_{i=0}^{d_n} f_i(t_1,\dots,t_{n-1}) t_n^i, \]
so that $d_n = \deg_{t_n} f$ is the largest index $i$ such that $f_i \neq 0$.
Moreover we have $\deg f_{d_n} \leq \deg f - d_n$ and  for all $i \in [n-1], \deg_{t_i} f_{d_n} \leq \deg_{t_i} f \leq a_i - b_i$. \\
%so as above $f_{d_n}$ is $A'$-reduced and thus $(f_{d_n})|_{A'} \not \equiv 0$ by Theorem \ref{CATS}.
Put $A' = \prod_{i=1}^{n-1} A_i$.  By the induction hypothesis,  we have
\[ \#\m U_{A'}(f_{d_n}) \geq \mm(a_1, \dots, a_{n-1}; b_1, \dots, b_{n-1};
\sum_{i=1}^{n-1}a_i - \deg f_{d_n}) \]
\[ \geq \mm(a_1, \dots, a_{n-1}; b_1, \dots, b_{n-1}; \sum_{i=1}^{n-1}a_i - \deg f + d_n). \]
Let $x' = (x_1,\dots,x_{n-1}) \in \m U_{A'}(f_{d_n})$.
Then $f(x',t_n) = \sum_{i = 0}^{d_n} f_i(x') t_n^i \in R[t_n]$ has degree $d_n \geq 0$ since its leading term $f_{d_n}(x') t^{d_n}$ is non-zero.
Since $A_n$ satisfies Condition (D), $f(x',t_n)$ vanishes at no more than $d_n$ points of $A_n$, so there are at least $a_n-d_n$ elements $x_n \in A_n$ such that $(x',x_n) \in \m U_A(f)$.  Thus
\[
 %\label{MAININEQ1}
 \# \m U_A(f) \geq (a_n -d_n) \mm(a_1,\dots,a_{n-1};b_1,\dots,b_{n-1}; \sum_{i=1}^{n-1} a_i - \deg f + d_n). \]
Since
\[ \deg f \leq \sum_{i=1}^n \deg_{t_i} f = \sum_{i=1}^{n-1} \deg_{t_i} f + d_n \]
and thus
\[     \sum_{i=1}^{n-1} b_i \leq \sum_{i=1}^{n-1} (a_i- \deg_{t_i} f) \leq \sum_{i=1}^{n-1}a_i - \deg f + d_n
\leq
 \sum_{i=1}^{n-1} a_i, \]
 we may apply Lemma \ref{lem:main} with
 $N = \sum_{i=1}^n a_i - \deg f$ and $k = a_n-d_n$, getting

 \[(a_n -d_n) \mm(a_1,\dots,a_{n-1};b_1,\dots,b_{n-1}; \sum_{i=1}^{n-1} a_i - \deg f + d_n) \geq \]
 \[
 \label{MAININEQ2}
 \mm(a_1,\dots,a_n;b_1,\dots,b_n;\sum_{i=1}^n a_i - \deg f). \]
 We deduce that
 \[ \# \m U_A(f) \geq  \mm(a_1,\dots,a_n;b_1,\dots,b_n;\sum_{i=1}^n a_i - \deg f). \]

\subsection{Sharpness of the Generalized Alon--F\"uredi Bound}
\label{sec:sharp}
For $i \in [n]$, put $a_i = \# A_i$, and
let $d$ be an integer such that $0 \leq d \leq \sum_{i=1}^n (a_i-b_i)$ (see $\S$3.1).
For any distribution $y = (y_1,\dots,y_n)$ of $\sum_{i=1}^n a_i - d$ balls in $n$ bins with
 $b_i \leq y_i \le a_i$, for all $i \in [n]$ choose a subset $S_i \subset A_i$ of cardinality $a_i-y_i$, and put\footnote{An empty product is understood to take the value $1$.}
\[ f(t) = \prod_{i=1}^n \prod_{x_i \in S_i} (t_i-x_i). \]
 Then
\[ \deg f = \sum_{i=1}^n (a_i-y_i) =d, \]
\[\forall i \in [n],~\deg_{t_i} f = a_i-y_i \leq a_i - b_i \]
and
\[ \#\mathcal{U}_A(f) = P(y) = \prod_{i=1}^n y_i. \]
Thus, for all finite grids $A = \prod_{i=1}^n A_i$
satisfying Condition (D) and all permissible values of $\deg_{t_1} f,\dots,\deg_{t_n} f$ and $\deg f$, there are instances of equality in the Generalized Alon--F\"uredi Bound.  The case $b_1 = \dots = b_n = 1$ yields the (known) sharpness of
the Alon--F\"uredi Bound.

\subsection{An equivalent formulation}
\label{sec:Petrov}
Let us say that a polynomial $f \in R[\underline{t}]$ is \textbf{polylinear} (resp. \textbf{simple polylinear}) if it is a product of factors (resp. distinct factors) of the form $t_i - x$ for $x \in R$. 
Petrov has observed (personal communication) that the Generalized Alon--F\"uredi Theorem is equivalent to the statement that for any
nonzero $A$-reduced polynomial $f \in R[\underline{t}]$, there is a simple polylinear polynomial $g \in R[\underline{t}]$ with $\deg_{t_i} f = \deg_{t_i} g$ for all $i \in [n]$, $\deg f = \deg g$ and such that $\# Z_A(f) \leq \# Z_A(g)$.  Thus it is possible to formulate the result without reference to balls in prefilled bins.  However, as we will see, having the result in this form is useful for applications.

%The Generalized Alon--F\"uredi Bound is sharp in the %following strong sense: for any distribution $y = %(b_1, \dots, b_n)$ of $\sum a_i - \deg f$ balls in %$n$ bins, choose a set $S_i \subseteq A_i$ of %cardinality $a_i - y_i$, for each $i$.  Then the %polynomial $\prod_{i = 1}^n \prod_{\lambda \in S_i} %(t_i - \lambda)$ has precisely $P(y)$ non-zeros in %$A$.
%\\ \\
%Setting

\section{Connections with the Schwartz--Zippel Lemma}
\label{sec:SZ_main}
\subsection{Schwartz--Zippel Lemma}
\label{sec:SZ}
The material in this section is motivated by a blog post of Lipton \cite{Lipton09} which discusses the history of the ``Schwartz--Zippel Lemma''.  We will further weigh in on the history of this circle of results, discuss various improvements and give the connection to the Alon--F\"uredi Theorem.

\begin{thm}[Schwartz--Zippel Lemma] Let $R$ be a domain and let $S \subset R$ be finite and nonempty.  Let $f \in R[\underline{t}] = R[t_1,\dots,t_n]$ be a nonzero polynomial.  Then
\begin{equation}
\label{SZEQ1}
 \# Z_{S^n}(f) \leq (\deg f) (\# S)^{n-1}.
 \end{equation}
\end{thm}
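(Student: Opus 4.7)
The plan is to deduce the Schwartz--Zippel bound directly from the Alon--F\"uredi Theorem (Theorem \ref{thm:AF}). Write $s = \# S$ and $d = \deg f$. Since $R$ is a domain, every nonzero difference in $R$ is a non--zero-divisor, so $S$ satisfies Condition (D) and the grid $A := S^n$ is eligible for Theorem \ref{thm:AF}. The target inequality $\# Z_{S^n}(f) \leq d s^{n-1}$ is equivalent to the lower bound $\# \mathcal{U}_{S^n}(f) \geq s^n - d s^{n-1}$, so it suffices to bound $\#\mathcal{U}_{S^n}(f)$ from below.

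I would first dispose of a degenerate range by cases on the size of $d$. If $d \geq s$, then $d s^{n-1} \geq s^n \geq \# Z_{S^n}(f)$, and the conclusion holds without any appeal to Alon--F\"uredi. The substantive case is $0 \leq d < s$. In this range I note that $f$ cannot vanish identically on $S^n$: a short induction on $n$, anchored at the classical fact that a nonzero univariate polynomial over a domain has at most $\deg f$ roots, shows that any nonzero $f \in R[\underline{t}]$ vanishing on all of $S^n$ satisfies $\deg f \geq s$, contradicting $d < s$. Hence the hypothesis of Theorem \ref{thm:AF} is met.

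Applying Theorem \ref{thm:AF} with $a_1 = \dots = a_n = s$ yields
\[ \# \mathcal{U}_{S^n}(f) \geq \mm(s, \dots, s;\, ns - d). \]
I would then evaluate this balls-in-bins quantity via the greedy distribution of Lemma \ref{FSLEMMA}(a). After prefilling each bin with one ball, $n(s-1) - d$ balls remain; since $d < s$, this is at least $(n-1)(s-1)$, so the greedy rule fills the first $n-1$ bins to capacity $s$ and deposits the leftover $s - d$ balls in the last bin. Thus $\mm(s,\dots,s;\,ns-d) = (s-d) s^{n-1}$, and passing from non-zeros back to zeros gives exactly $\# Z_{S^n}(f) \leq s^n - (s-d) s^{n-1} = d s^{n-1}$.

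I do not foresee a genuine obstacle. The whole argument reduces to a one-line greedy computation of $\mm(s, \dots, s;\, ns-d)$ in the ``almost full'' regime $d < s$, plus the standard Alon--F\"uredi input. The only point demanding minor care is ensuring the applicability of Theorem \ref{thm:AF}, i.e.\ that $f$ does not vanish identically on $S^n$; this is automatic once $d < s$, and the remaining range $d \geq s$ is absorbed by the trivial bound $\# Z_{S^n}(f) \leq s^n$.
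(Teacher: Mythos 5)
Your proposal is correct and follows essentially the same route as the paper: dispose of the range $\deg f \geq s$ trivially, then in the range $\deg f < s$ apply Theorem~\ref{thm:AF} and compute $\mm(s,\dots,s;ns-\deg f)=(s-\deg f)s^{n-1}$ via the greedy distribution. The only cosmetic difference is how you justify that $f$ does not vanish identically on $S^n$ (you sketch an induction, while the paper simply notes that $\deg f < s$ makes $f$ an $S^n$-reduced nonzero polynomial, so the CATS Lemma applies); both are fine.
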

\begin{proof}
Let $s = \# S$.  The statement is equivalent to
\[ \# \mathcal{U}_{S^n}(f) \geq s^{n-1}(s - \deg f). \]
If $\deg f \geq s$, then (\ref{SZEQ1}) asserts
that $f$ has no more zeros in $S^n$ than the size of
$S^n$: true.  So the nontrivial case is $\deg f < s$.  Then $f$ is $S^n$-reduced, so
\[ \# \mathcal{U}_{S^n}(f) \geq \mm(s,\dots,s;n s - \deg f) = s^{n-1}(s - \deg f) \]
by Alon--F\"uredi and because the greedy distribution is $(s,\dots,s,s - \deg f)$.
\end{proof}
\noindent
The case of the Schwartz--Zippel Lemma in which $R = S = \F_q$ is due to Ore \cite{Ore22}.  Thus the Schwartz--Zippel Lemma may be viewed as a ``Restricted Variable Ore Theorem'', although it is not the most general result along those lines.  In fact, the same argument establishes the following.

\begin{thm}[Generalized Schwartz--Zippel Lemma]
\label{LOWDEGREEAFTHM}
Let $A = \prod_{i=1}^n A_i \subset R^n$ be a finite grid satisfying Condition (D), and suppose $\# A_1 \geq \dots \geq \# A_n$.  Let $f \in R[\underline{t}] = R[t_1,\dots,t_n]$ be a nonzero polynomial.  Then
\[ \# Z_A(f) \leq (\deg f) \prod_{i=1}^{n-1} \# A_i. \]
\end{thm}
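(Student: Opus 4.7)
The plan is to reduce this to an application of the Alon--F\"uredi Theorem (Theorem \ref{thm:AF}) via the CATS Lemma, and then explicitly compute the greedy distribution using Lemma \ref{FSLEMMA}(d). Put $a_i = \# A_i$, so $a_1 \geq \dots \geq a_n$.

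First I would dispose of the trivial range $\deg f \geq a_n$: in that case $(\deg f) \prod_{i=1}^{n-1} a_i \geq \prod_{i=1}^n a_i = \# A \geq \# Z_A(f)$, so the inequality is automatic. Thus I may assume $\deg f < a_n$, which by $a_n \leq a_i$ for all $i$ gives $\deg_{t_i} f \leq \deg f < a_i$. Hence $f$ is $A$-reduced, and since $A$ satisfies Condition (D) and $f \neq 0$, the CATS Lemma implies that $f$ does not vanish identically on $A$, so the hypotheses of Theorem \ref{thm:AF} are met.

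Next I would invoke the Alon--F\"uredi Theorem to obtain
\[ \# \mathcal{U}_A(f) \geq \mm(a_1,\ldots,a_n; \textstyle\sum_{i=1}^n a_i - \deg f), \]
and then compute this value using Lemma \ref{FSLEMMA}(d). After the mandatory prefilling of one ball per bin, I am left with $N - n = \sum_{i=1}^n (a_i - 1) - \deg f$ balls. Because $\deg f \leq a_n - 1 \leq a_i - 1$ for every $i$, the greedy distribution fills bins $1, \ldots, n-1$ completely (using $\sum_{i=1}^{n-1}(a_i-1)$ balls) and leaves $a_n - 1 - \deg f$ balls for bin $n$. In the notation of Lemma \ref{FSLEMMA}(d) this is the case $j = n - 1$, $r = a_n - 1 - \deg f$, and the lemma gives
\[ \mm(a_1,\ldots,a_n; \textstyle\sum_{i=1}^n a_i - \deg f) = (a_n - \deg f)\prod_{i=1}^{n-1} a_i. \]

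Finally I would convert the lower bound on $\# \mathcal{U}_A(f)$ into the desired upper bound on $\# Z_A(f)$ via $\# Z_A(f) = \# A - \# \mathcal{U}_A(f)$, yielding
\[ \# Z_A(f) \leq \prod_{i=1}^n a_i - (a_n - \deg f)\prod_{i=1}^{n-1} a_i = (\deg f) \prod_{i=1}^{n-1} a_i. \]
There is essentially no obstacle here; the only point that warrants care is making sure that the assumption $\deg f < a_n$ puts the computation squarely in the regime where the greedy distribution has the clean closed form above, which is precisely why the trivial case $\deg f \geq a_n$ had to be separated off at the start.
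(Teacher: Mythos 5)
Your proposal is correct and matches the paper's approach: dispose of the trivial range $\deg f \geq \#A_n$, observe that otherwise $f$ is $A$-reduced (and hence, by the CATS Lemma under Condition (D), does not vanish on all of $A$), apply the Alon--F\"uredi bound, and evaluate it via the greedy distribution $(\#A_1,\dots,\#A_{n-1},\#A_n-\deg f)$. You spell out the invocation of Lemma \ref{FSLEMMA}(d) and the complementation step $\#Z_A(f) = \#A - \#\mathcal{U}_A(f)$, which the paper leaves implicit, but the underlying argument is the same.
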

\noindent
Indeed, the nontrivial case of this result is precisely the case $\#A_1 \geq \dots \geq \#A_n > \deg f$ of Alon--F\"uredi, and the greedy distribution is $(\#A_1,\dots, \#A_{n-1}, \#A_n-\deg f)$.
%\\ \\
%In the next sections we will take a closer look at %the original work of Schwartz, DeMillo--Lipton and %Zippel.

\subsection{Schwartz's Theorem}
\label{sec:Schwartz}
The Schwartz--Zippel Lemma appears in Schwartz's 1980 paper as a corollary of a more general upper bound on zeros of a polynomial over a domain \cite[Cor. 1]{Schwartz80}.  We give a version over an arbitrary ring.

\begin{thm}[{Schwartz Theorem \cite[Lemma 1]{Schwartz80}}]
\label{SCHWARTZ}
%\label{LOCALSCHWARTZ}
Let  $f = f_n \in R[t_1,\dots,t_n]$ be a
nonzero polynomial and let $d_n = \deg_{t_n} f_n$.
Let $f_{n-1} \in R[t_1,\dots,t_{n-1}]$ be the coefficient of $t_n^{d_n}$
in $f_n$.  Let $d_{n-1} = \deg_{t_{n-1}} f_{n-1}$, and let $f_{n-2} \in R[t_1,\dots,t_{n-2}]$
be the coefficient of $t_{n-1}^{d_{n-1}}$ in $f_{n-1}$.  Continuing in this
manner we define for all $1 \leq i \leq n$ a polynomial
$f_i \in R[t_1,\dots,t_i]$ with $\deg_{t_i} f_i = d_i$.
%Let $d_1 = \deg_{t_1} f_1$, and
%let $f_2 \in R[t_2,\dots,t_n]$ be the coefficient of $t_1^{d_1}$
%in $f_1$.  Let $d_2 = \deg_{t_2} f_2$, and let $f_3 \in R[t_3,\dots,t_n]$
%be the coefficient of $t_2^{d_2}$ in $f_2$.  Continuing in this
%manner we define for all $i \in [n]$ a polynomial
%$f_i \in R[t_i,\dots,t_n]$ with $\deg_{t_i} f_i = d_i$.
Let $A = \prod_{i=1}^n A_i$ be a finite grid
satisfying Condition (D).  Then
\[ \# Z_A(f) \leq  \# A \sum_{i=1}^n \frac{d_i}{\# A_i}. \]
\end{thm}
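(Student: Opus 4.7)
The plan is to prove this by induction on $n$, using the recursive structure of the $f_i$'s naturally.

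For the base case $n=1$, the polynomial $f_1 \in R[t_1]$ has degree $d_1$, and since $A_1$ satisfies Condition (D), the argument from the base case of the Generalized Alon--F\"uredi proof (factoring out linear factors $t_1 - x_i$) shows $\# Z_{A_1}(f_1) \leq d_1 = \# A_1 \cdot \frac{d_1}{\# A_1}$.

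For the induction step, I would write $f = f_n = \sum_{i=0}^{d_n} g_i(t_1,\dots,t_{n-1}) \, t_n^i$, where by the definition of the recursion, $g_{d_n} = f_{n-1}$ is a nonzero polynomial in $R[t_1,\dots,t_{n-1}]$. Let $A' = \prod_{i=1}^{n-1} A_i$. I then partition $A$ according to whether the leading coefficient vanishes at $x' = (x_1,\dots,x_{n-1}) \in A'$:
\begin{itemize}
\item If $f_{n-1}(x') \neq 0$, then $f(x', t_n) \in R[t_n]$ has degree exactly $d_n$, and since $A_n$ satisfies Condition (D) it has at most $d_n$ zeros in $A_n$.
\item If $f_{n-1}(x') = 0$, then trivially $f(x', t_n)$ has at most $\# A_n$ zeros in $A_n$.
\end{itemize}
Summing over $A'$ gives
\[ \# Z_A(f) \leq d_n \cdot \bigl(\# A' - \# Z_{A'}(f_{n-1})\bigr) + \# A_n \cdot \# Z_{A'}(f_{n-1}) \leq d_n \cdot \# A' + \# A_n \cdot \# Z_{A'}(f_{n-1}). \]

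Now apply the induction hypothesis to $f_{n-1}$ (which is nonzero and for which the recursion yields exactly the same polynomials $f_{n-2}, \dots, f_1$ with the same degrees $d_{n-1}, \dots, d_1$, and for which $A'$ still satisfies Condition (D)) to get $\# Z_{A'}(f_{n-1}) \leq \# A' \sum_{i=1}^{n-1} \frac{d_i}{\# A_i}$. Substituting and using $\# A = \# A_n \cdot \# A'$ gives
\[ \# Z_A(f) \leq \frac{\# A}{\# A_n} \cdot d_n + \# A \sum_{i=1}^{n-1} \frac{d_i}{\# A_i} = \# A \sum_{i=1}^n \frac{d_i}{\# A_i}, \]
completing the induction.

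The only subtle point is checking that the induction can actually be applied: the recursive construction starting from $f_{n-1}$ yields the same sequence $f_{n-2}, f_{n-3}, \dots$ with the same $d_{n-1}, \dots, d_1$ that we obtained starting from $f_n$, so the induction hypothesis feeds back cleanly. There is no real obstacle beyond this bookkeeping; the crux is simply the dichotomy based on whether the $t_n$-leading coefficient $f_{n-1}$ vanishes at a given slice, which turns the recursive definition of $f_{n-1}, \dots, f_1$ into an iterated ``worst case'' bound.
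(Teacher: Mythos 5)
Your proof is correct and is essentially the same as the paper's: both go by induction on $n$, handle the base case via the root-factor argument under Condition (D), split the slices of $A$ over $A'$ according to whether $f_{n-1}(x')$ vanishes, bound each case by $\# A_n$ and $d_n$ respectively, and feed the induction hypothesis into $\# Z_{A'}(f_{n-1})$. Your intermediate display is just a slightly more explicit spelling-out of the same counting.
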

\begin{proof}
For $i \in [n]$, put $a_i = \# A_i$.  We go by induction on $n$.  The base case is the same as that of Theorem \ref{thm:main}: essentially the root-factor phenomenon of high school algebra, used with some care because $R$ need not be a domain.  Inductively
we suppose the result holds for polynomials in $n-1$ variables and in particular for $f_{n-1} \in R[t_1,\dots,t_{n-1}]$
and $A' = \prod_{i=1}^{n-1} A_i$.  Let $x' = (x_1,\dots,x_{n-1}) \in A'$.
If $f_{n-1}(x') = 0$, it may be the case that $f_n(x', x_n) = 0$ for all
$x_n \in A_n$.  But if not, then $f_n(x', t_n) \in R[t_n]$ has at most
$d_n$ zeros in $A_n$.  Thus the number of zeros of $f = f_n$ in
$A$ is at most
\[ \# A_n \cdot \# A' \left( \sum_{i=1}^{n-1} \frac{d_i}{a_i} \right) +
d_n \# A' = \# A \sum_{i=1}^n \frac{d_i}{a_i}.  \qedhere \]
\end{proof}
\noindent

\begin{prop}
Theorem \ref{SCHWARTZ} implies Theorem \ref{LOWDEGREEAFTHM}.
%Schwartz's Theorem implies the Low Degree %Alon--F\"uredi Theorem.
\end{prop}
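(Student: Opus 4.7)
The plan is to manipulate Schwartz's bound
\[ \#Z_A(f) \leq \#A\sum_{i=1}^n \frac{d_i}{\#A_i} = \sum_{i=1}^n d_i \prod_{j\neq i}\#A_j \]
into the cleaner form claimed in Theorem \ref{LOWDEGREEAFTHM}. Two ingredients are needed: a bound on $\sum d_i$ in terms of $\deg f$, and an exploitation of the ordering $\#A_1\geq\dots\geq\#A_n$ on the products $\prod_{j\neq i}\#A_j$.

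First I would establish the key degree inequality
\[ \sum_{i=1}^n d_i \leq \deg f, \]
by induction on $n$. The base case $n=1$ is immediate since $d_1 = \deg f_1 = \deg f$. For the inductive step, observe that the polynomial $f_{n-1}$ is (up to dividing off $t_n^{d_n}$) extracted from those monomials of $f_n$ whose $t_n$-degree equals $d_n$; any such monomial has total degree at most $\deg f_n = \deg f$, so after stripping off $t_n^{d_n}$ we get $\deg f_{n-1} \leq \deg f - d_n$. Applying the inductive hypothesis to $f_{n-1} \in R[t_1,\dots,t_{n-1}]$ yields $\sum_{i=1}^{n-1} d_i \leq \deg f_{n-1} \leq \deg f - d_n$, and adding $d_n$ to both sides finishes the induction.

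Next I would use the size ordering. For each $i\in[n]$,
\[ \prod_{j\neq i}\#A_j \;=\; \frac{\#A_n}{\#A_i}\prod_{j=1}^{n-1}\#A_j \;\leq\; \prod_{j=1}^{n-1}\#A_j, \]
since $\#A_n \leq \#A_i$ by hypothesis (with equality when $i = n$). Substituting both inequalities into the rewritten Schwartz bound gives
\[ \#Z_A(f) \;\leq\; \sum_{i=1}^n d_i \prod_{j\neq i}\#A_j \;\leq\; \Bigl(\sum_{i=1}^n d_i\Bigr)\prod_{j=1}^{n-1}\#A_j \;\leq\; (\deg f)\prod_{j=1}^{n-1}\#A_j, \]
which is precisely the conclusion of Theorem \ref{LOWDEGREEAFTHM}.

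There is no serious obstacle here; the whole argument is a short piece of degree bookkeeping plus a monotonicity observation. The only point that needs a little care is the inductive claim $\deg f_{n-1}\leq \deg f - d_n$, which must be read off directly from the definition of $f_{n-1}$ as the coefficient of $t_n^{d_n}$ in $f_n$, rather than from a more naive estimate on total degrees.
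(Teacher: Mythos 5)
Your argument is correct and essentially matches the paper's proof: both bound $\sum_i d_i \leq \deg f$ and then exploit the ordering $\#A_1 \geq \dots \geq \#A_n$ to bound each $\prod_{j\neq i}\#A_j$ by $\prod_{j=1}^{n-1}\#A_j$. The only cosmetic difference is that the paper obtains $\sum_i d_i \leq \deg f$ in one line by observing that the coefficient of $t_1^{d_1}\cdots t_n^{d_n}$ in $f$ is nonzero, whereas you reach the same conclusion by a short induction on $n$.
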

\begin{proof} The coefficient of $t_1^{d_1} \cdots t_n^{d_n}$ in $f$ is nonzero, so $\sum_{i=1}^n d_i \leq \deg f$, and thus
\[ \# Z_A(f) \leq \# A \sum_{i=1}^n \frac{d_i}{\# A_i} \leq
(\# A_1 \cdots \# A_{n-1}) \sum_{i=1}^n d_i \leq (\deg f) \prod_{i=1}^{n-1} \# A_i. \qedhere \]
\end{proof}

%\begin{thm}(Schwartz--Zippel Lemma \cite{Schwartz80},\cite{Zippel79})
%\label{SZ2}
%Let $F$ be any field, and let $0 \neq f \in F[t_1,\dots,t_n]$ be a nonzero
%polynomial of degree $d$.  Let $S$ be a finite subset of $F$.  Then the
%probability that for randomly chosen elements $x_1,\dots,x_n \in S$
%we have $f(x_1,\dots,x_n) = 0$ is at most $\frac{d}{|S|}$.  More precisely,
%put $Z_S(f) := \{ (x_1,\dots,x_n) \in S^n \ | \ f(x_1,\dots,x_n) = 0 \}$.
%Then
%\[ |Z_S(f)| \leq d |S|^{n-1}. \]
%\end{thm}
%\begin{proof} By induction on $n$.
%For $n = 1$, it simply says that a nonzero degree $d$ univariate polynomial
%over a field cannot have more than $d$ roots.  Assume true for $n-1$ variables
%and write
%\[f(t_1,\dots,t_n) = \sum_{i=0}^d f_i(t_2,\dots,t_n) t_1^i. \]
%Since $f$ is nonzero, so is some $f_i$; choose the largest such index
%$i$.  We have $\deg(f_i) \leq d-i$.  By our induction hypothesis, the
%probability that $f_i(t_2,\dots,t_n) = 0$ is at most $\frac{d-i}{|S|}$.  Now,
%if $f_i(x_2,\dots,x_n) \neq 0$, then $f(t_1,x_2,\dots,x_n)$ is univariate
%of degree $i$.  The conditional probability that $f(x_1,\dots,x_n) = 0$
%given that $f_i(x_2,\dots,x_n)$ is not zero is therefore at most %$\frac{i}{|S|}$.
%Let us denote by $A$ the event that $f(x_1,\dots,x_n) = 0$ and by $B$
%the event that $f_i(x_2,\dots,x_n) = 0$.  We therefore have
%\[ \Pr A = \Pr(A \cap B) + \Pr(A \cap B^c) \]
%\[ = \Pr(B) \Pr(A|B) + \Pr(B^c)\Pr(A|B^c) \]
%\[ \leq \Pr(B) + \Pr(A|B^c) \leq \frac{d-i}{|S|} + \frac{i}{|S|} = %\frac{d}{|S|}.  \qedhere\]
%\end{proof}

\subsection{DeMillo--Lipton and Zippel}
\label{sec:DMLZ}
The following result was proved by DeMillo and Lipton in \cite{DeMillo--Lipton78} and then independently by Zippel \cite{Zippel79}.

\begin{thm}[DeMillo--Lipton--Zippel Theorem]
\label{DMLZ}
Let $R$ be a domain, let $f \in R[\underline{t}] = R[t_1,\dots,t_n]$ be a nonzero polynomial, and let  $d \in \Z^+$ be such that $\deg_{t_i} f \leq d$ for all $i \in [n]$. Let $S \subset R$ be a nonempty set with more than $d$ elements.  Then
\[ \#Z_{S^n}(f) \leq (\# S)^n - (\# S-d)^n. \]
\end{thm}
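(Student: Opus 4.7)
The plan is to derive this bound directly from the Generalized Alon--F\"uredi Theorem (Theorem \ref{thm:main}) by a judicious choice of prefill values. Set $s = \#S$ and work on the grid $A = S^n$, so that $a_i = \#A_i = s$ for every $i$. Since $R$ is a domain, $S$ satisfies Condition (D) automatically. The key step is to take $b_i = s - d$ for all $i \in [n]$: the hypothesis $\#S > d$ ensures $1 \leq s - d \leq s$, and the assumption $\deg_{t_i} f \leq d$ translates precisely into $\deg_{t_i} f \leq \#A_i - b_i$, so all hypotheses of Theorem \ref{thm:main} are met.

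Applying Theorem \ref{thm:main} with these parameters gives
\[ \# \mathcal{U}_{S^n}(f) \;\geq\; \mathfrak{m}\bigl(s,\ldots,s;\; s-d,\ldots,s-d;\; ns - \deg f\bigr). \]
Next I would observe that this minimum is at least $(s-d)^n$. This is essentially by construction: every admissible distribution $y = (y_1,\ldots,y_n)$ in the prefilled-bins problem satisfies $y_i \geq b_i = s - d$ for every $i$, and therefore $P(y) = \prod_{i=1}^n y_i \geq (s-d)^n$. Taking complements in $S^n$ then yields
\[ \# Z_{S^n}(f) = s^n - \# \mathcal{U}_{S^n}(f) \leq s^n - (s-d)^n, \]
as desired.

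There is no real obstacle here: once the generalization Theorem \ref{thm:main} is available, the DeMillo--Lipton--Zippel bound falls out by design. Indeed, incorporating the individual degrees $\deg_{t_i} f$ via prefill values was motivated (as the introduction indicates) precisely by the fact that the plain Alon--F\"uredi Theorem does not imply Theorem \ref{DMLZ}. If one wishes, Lemma \ref{LEMMA2.2} can be brought in to note that the lower bound $(s-d)^n$ is actually attained as a value of $\mathfrak{m}$ exactly when $\deg f = nd$, corresponding to the extremal polynomials of the form $\prod_{i=1}^n \prod_{x \in T_i}(t_i - x)$ with $\# T_i = d$; but this refinement is not needed for the stated inequality.
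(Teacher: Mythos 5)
Your proof is correct, but it takes a genuinely different route from the one the paper uses to prove Theorem \ref{DMLZ} itself. The paper's own proof of this statement is a direct induction on $n$: it observes that since $\deg_{t_n} f \leq d$, at most $d$ specializations $f(t_1,\ldots,t_{n-1},x_n)$ vanish identically, applies the inductive bound $s^{n-1}-(s-d)^{n-1}$ to the remaining $\geq s-d$ nonzero specializations, and then computes $ds^{n-1} + (s-d)\bigl(s^{n-1}-(s-d)^{n-1}\bigr) = s^n - (s-d)^n$. That argument is entirely elementary and self-contained, needing only the single-variable root-factor bound. Your proof instead derives the statement as a corollary of Theorem \ref{thm:main} by taking $b_i = s-d$ and noting that every admissible distribution has $y_i \geq b_i$, so $\mathfrak{m}(s,\ldots,s;s-d,\ldots,s-d;ns-\deg f) \geq (s-d)^n$. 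This is exactly the mechanism the paper later uses in the Proposition showing that Generalized Alon--F\"uredi implies Generalized DeMillo--Lipton--Zippel (and in fact shows equality $\mathfrak{m} = \prod b_i$ there, using Lemma \ref{LEMMA2.2}, which you also mention). So your approach is the one the paper's introduction advertises as a motivation for introducing the prefilled-bins machinery, while the paper's proof of Theorem \ref{DMLZ} as stated is the classical induction. What the paper's induction buys is a streamlined, dependency-free argument; what your derivation buys is that it makes the logical relationship between the two theorems transparent and shows DMLZ is a special case of the more general framework, with the extremal polynomials falling out for free.

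One minor precision issue worth noting: you assert that the minimum "is actually attained as a value of $\mathfrak{m}$ exactly when $\deg f = nd$." That is right in spirit, but the cleaner justification is: $\mathfrak{m} = \prod b_i = (s-d)^n$ iff $N = ns - \deg f \leq \sum b_i = n(s-d)$, i.e., iff $\deg f \geq nd$; combined with $\deg f \leq \sum \deg_{t_i} f \leq nd$ forced by the hypotheses of Theorem \ref{thm:main}, this gives $\deg f = nd$. This is exactly the remark the paper makes after its Proposition, that Generalized DMLZ is equivalent to the case $\deg f = \sum_i \deg_{t_i} f$ of Generalized Alon--F\"uredi.
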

\begin{proof}
Put $s = \# S$.  We go by induction on $n$, and $n = 1$ case is by now familiar.  Assume
the result for $n-1$.  Since $\deg_{t_n} f \leq d$, we have
\[ \# \{x_n \in S \mid f(t_1,\dots,t_{n-1},x_n) = 0\} \leq d, \]so there are at least $s-d$ values of
$x_n$ such that $g = f(t_1,\dots,t_{n-1},x_n)$ is a nonzero
polynomial.  By induction, $g$ has at most $s^{n-1} - (s-d)^{n-1}$
zeros on $S^{n-1}$.  So
\[ \# Z_{S^n}(f) \leq d s^{n-1} + (s-d)(s^{n-1}-(s-d)^{n-1})\]
\[ = ds^{n-1} + s^n -ds^{n-1} - (s-d)^n = s^n - (s-d)^n. \qedhere \]
\end{proof}
\noindent
Just like the Schwartz--Zippel Lemma, a stronger version of the DeMillo--Lipton--Zippel Theorem can be proved with essentially the same argument.  We leave the proof -- or rather this proof -- to the reader.

\begin{thm}[Generalized DeMillo--Lipton--Zippel Theorem]
\label{GENDMLZ}
Let $R$ be a ring, $f \in R[t_1,\dots,t_n]$ a nonzero polynomial, and for $i \in [n]$ put $d_i = \deg_{t_i} f$. Let $A = \prod_{i=1}^n A_i$ be a finite grid
satisfying Condition (D).  We suppose that
$1 \leq d_i < a_i$ for all $i \in [n]$.  Then
\[ \# \mathcal{U}_A(f) \geq \prod_{i=1}^n (\# A_i- d_i). \]
\end{thm}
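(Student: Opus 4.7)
The plan is to mimic the proof of Theorem~\ref{DMLZ}, inducting on $n$ while keeping track of each partial degree $d_i$ rather than a uniform bound $d$. The base case $n=1$ is immediate: $f\in R[t_1]$ is nonzero of degree $d_1<a_1$, and because $A_1$ satisfies Condition (D), the root--factor argument from the base case of Section~\ref{sec:proof} shows $f$ has at most $d_1$ zeros in $A_1$, whence $\#\mathcal{U}_{A_1}(f)\ge a_1-d_1$.

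For the inductive step, expand
\[ f(t_1,\dots,t_n) = \sum_{j=0}^{d_n} c_j(t_1,\dots,t_{n-1})\,t_n^j, \qquad c_{d_n}\neq 0. \]
The key claim is that at least $a_n-d_n$ elements $x_n\in A_n$ give rise to a nonzero polynomial $g_{x_n}(t_1,\dots,t_{n-1}):=f(t_1,\dots,t_{n-1},x_n)\in R[t_1,\dots,t_{n-1}]$. To see this, pick any monomial $m$ in $t_1,\dots,t_{n-1}$ whose coefficient $[m]c_{d_n}$ in $c_{d_n}$ is nonzero, and consider the univariate polynomial $p_m(t_n):=\sum_{j=0}^{d_n}[m]c_j\cdot t_n^j\in R[t_n]$; its leading coefficient is nonzero, so $\deg p_m=d_n$. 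Condition (D) on $A_n$ implies that $p_m$ has at most $d_n$ zeros in $A_n$, and for each of the remaining at least $a_n-d_n$ values of $x_n$, the coefficient of $m$ in $g_{x_n}$ equals $p_m(x_n)\neq 0$, so $g_{x_n}\neq 0$.

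For each such good $x_n$ we have $\deg_{t_j}g_{x_n}\le d_j<a_j$ for all $j\in[n-1]$, so the inductive hypothesis applied to $g_{x_n}$ on the grid $A':=\prod_{j=1}^{n-1}A_j$ yields $\#\mathcal{U}_{A'}(g_{x_n})\ge\prod_{j=1}^{n-1}(a_j-d_j)$. Summing the contributions from the at least $a_n-d_n$ good choices of $x_n$ then gives $\#\mathcal{U}_A(f)\ge\prod_{i=1}^n(a_i-d_i)$, as required.

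The main technical wrinkle, which I expect to be the chief obstacle, is that after the substitution $t_n=x_n$ some partial degree $\deg_{t_j}g_{x_n}$ may drop to $0$ even when the original $d_j\ge 1$, so the induction hypothesis as literally stated does not apply to $g_{x_n}$. This is handled cleanly by strengthening the statement to permit $d_j=0$; the argument above goes through verbatim in that case, since the factor $a_j-0=\#A_j$ simply records that $t_j$ is unconstrained and every value in $A_j$ contributes independently.
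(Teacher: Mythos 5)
Your proof is correct, and it fleshes out the direct inductive argument that the paper explicitly declines to write, saying the generalized theorem ``can be proved with essentially the same argument'' as the classical DeMillo--Lipton--Zippel proof and that it ``leave[s] the proof $\dots$ to the reader.'' The proof the paper actually records is a different one: it derives the statement in a few lines from the Generalized Alon--F\"uredi Theorem by setting $b_i = a_i - d_i$, observing $\deg f \leq \sum_i d_i$, and computing $\#\mathcal{U}_A(f) \geq \mm(a_1,\dots,a_n;b_1,\dots,b_n;\sum_i b_i) = \prod_i b_i = \prod_i(a_i-d_i)$. The tradeoff is clear: your route is elementary and self-contained, needing only the univariate root--factor argument under Condition (D), whereas the paper's route is essentially immediate once the Generalized Alon--F\"uredi machinery and the balls-in-prefilled-bins combinatorics are in place, and it exhibits Generalized DeMillo--Lipton--Zippel as precisely the case $\deg f = \sum_i \deg_{t_i} f$ of the main theorem. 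You also correctly handled the two points where porting the domain argument to a ring with Condition (D) requires care: to bound the number of $x_n \in A_n$ with $f(t_1,\dots,t_{n-1},x_n)=0$ you pass to a single univariate coefficient polynomial $p_m \in R[t_n]$ of degree exactly $d_n$, where Condition (D) on $A_n$ applies directly (an alternative is to note that non-zero-divisors in $R$ remain non-zero-divisors in $R[t_1,\dots,t_{n-1}]$, so the root--factor argument applies verbatim in that polynomial ring, but your approach is more self-contained); and you flag and repair the need to allow $d_j = 0$ in the induction hypothesis, since partial degrees can drop under substitution.
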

\noindent
%The DeMillo--Lipton--Zippel Theorem \emph{does not} follow from Alon--F\"uredi,
%but it does follow from Bishnoi's Generalization.  Indeed, we give a quick
%deduction of Theorem \ref{GENDMLZ} from Theorem \ref{BISHNOITHM}.
%\\ \\
%\emph{proof of Theorem \ref{GENDMLZ}}:
Now for a somewhat unsettling remark: the DeMillo--Lipton--Zippel Theorem does not imply the Schwartz--Zippel Lemma nor is it implied by any of Schwartz's results!
%\footnote{In \cite{Lipton09}, the Schwartz--Zippel Lemma is called the ``strong bound'' and Theorem \ref{DMLZ} is called the ``weak bound''.  We do not know the reason behind this terminology: modesty, perhaps?}

\begin{example}
\label{EXAMPLE2}
Let $S$ be a finite subset of $R$ containing $0$, satisfying Condition (D), and of size $s \geq 3$.
Let $f = t_1t_2 \in R[t_1, t_2]$.  Then we have
\[ \# Z_{S^2}(f) = 2s-1. \]
DeMillo--Lipton--Zippel gives
\[ \# Z_{S^2}(f) \leq s^2 - (s-1)^2 = 2s-1. \]
Schwartz's Theorem gives
\[ \# Z_{S^2}(f) \leq s^2 \left(\frac{1}{s} + \frac{1}{s} \right) = 2s. \]
The Alon--F\"uredi Theorem gives
\[ \# Z_{S^2}(f) \leq s^2 - \mm(s,s;2s-2) = s^2 - s(s-2) = 2s. \]
Thus neither Alon--F\"uredi nor Schwartz implies DeMillo--Lipton--Zippel.
For the other direction, take $f = t_1 + t_2$.
DeMillo--Lipton--Zippel gives  $\#Z_{S^2}(f) \leq s^2 - (s - 1)^2 = 2s - 1$, while the other results give $\#Z_{S^2}(f) \leq s$.
\end{example}

\noindent
But we can still relate Schwartz--Zippel and DeMillo--Lipton--Zippel as follows.

\begin{prop}The Generalized Alon--F\"uredi Theorem implies the Generalized DeMillo--Lipton--Zippel Theorem.
\end{prop}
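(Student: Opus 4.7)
The plan is to apply the Generalized Alon--F\"uredi Theorem with the choice $b_i := \#A_i - d_i$ for each $i \in [n]$, and then observe that the resulting minimum-of-products expression is bounded below by $\prod_{i=1}^n b_i$ tautologically.

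First I would verify that the hypotheses of Theorem \ref{thm:main} are met. The assumption $1 \leq d_i < a_i$ (where $a_i = \#A_i$) yields $1 \leq a_i - d_i \leq a_i - 1$, so indeed $1 \leq b_i \leq a_i$. Moreover, by construction $\deg_{t_i} f = d_i = a_i - b_i$, so the degree condition $\deg_{t_i} f \leq a_i - b_i$ holds (with equality). Since $f$ is nonzero and $A$ satisfies Condition (D), the Generalized Alon--F\"uredi Theorem then gives
\[ \#\mathcal{U}_A(f) \geq \mm\bigl(a_1,\dots,a_n;\,b_1,\dots,b_n;\,\textstyle\sum_{i=1}^n a_i - \deg f\bigr). \]

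Next I would unwind the definition of $\mm$. The quantity on the right is the minimum of $P(y) = \prod_{i=1}^n y_i$ taken over distributions $y = (y_1,\dots,y_n)$ with $b_i \leq y_i \leq a_i$ (the case $N < \sum b_i$ being handled by definition as $\prod b_i$ directly). In either case, every admissible distribution satisfies $y_i \geq b_i$, hence
\[ P(y) = \prod_{i=1}^n y_i \geq \prod_{i=1}^n b_i = \prod_{i=1}^n (\#A_i - d_i). \]
Combining, $\#\mathcal{U}_A(f) \geq \prod_{i=1}^n (\#A_i - d_i)$, which is exactly the Generalized DeMillo--Lipton--Zippel bound.

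There is essentially no obstacle here: the argument is a direct specialization. The only mild subtlety is to notice that one does not need any delicate understanding of $\mm(a_1,\dots,a_n;b_1,\dots,b_n;N)$ (which, as Example \ref{EXAMPLE1} warns, can fail to coincide with the greedy-distribution value when the $b_i$ are not monotone); the trivial lower bound $\mm \geq \prod b_i$ from the constraint $y_i \geq b_i$ already suffices. This is precisely the role of the prefilling parameters $b_i$ and the reason Generalized Alon--F\"uredi captures information about individual variable degrees that ordinary Alon--F\"uredi misses (cf.\ Example \ref{EXAMPLE2}).
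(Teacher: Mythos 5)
Your proposal is correct and takes essentially the same approach as the paper: both choose $b_i = \#A_i - d_i$, verify the hypotheses of Theorem \ref{thm:main}, and reduce the resulting $\mm$ bound to $\prod_i b_i$. The paper's proof reaches $\prod b_i$ by first using $\deg f \leq \sum d_i$ to compare with $\mm(a_1,\dots,a_n;b_1,\dots,b_n;\sum b_i)$ and then citing Lemma \ref{LEMMA2.2}, whereas you go slightly more directly by observing that every admissible distribution satisfies $y_i \geq b_i$; this is only a cosmetic difference.
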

\begin{proof} For $i \in [n]$, put $a_i = \# A_i$ and $b_i =
a_i - d_i$, so $1 \leq b_i \leq a_i$ for all $i \in [n]$.  Moreover $\deg f \leq \sum_{i=1}^n d_i$, so
the Generalized-Alon--F\"uredi Theorem gives
\[ \uu_A \geq \mm(a_1,\dots,a_n;b_1,\dots,b_n;\sum_{i=1}^n a_i- \deg f) \geq \mm(a_1\dots,a_n;b_1,\dots,b_n;\sum_{i=1}^n (a_i - d_i)) \] \[ =
\mm(a_1,\dots,a_n;b_1,\dots,b_n;\sum_{i=1}^n b_i) = \prod_{i=1}^n b_i =
\prod_{i=1}^n (a_i-d_i).  \qedhere\]
\end{proof}
\noindent
Generalized DeMillo--Lipton--Zippel is equivalent to the case $\deg f = \sum_{i=1}^n \deg_{t_i} f$ of Generalized Alon--F\"uredi.  In particular, the bound is sharp in every case.

\section{Connections with Coding Theory}
\label{sec:coding_theory}
\noindent
In this section we make use of some terminology (only) from coding theory.  Definitions can be found in e.g. \cite[Ch. 3]{vanLint}.
\\ \\
Consider the polynomial ring $\F_q[\underline{t}] = \F_q[t_1,\dots,t_n]$.  Since $\F_q$ is finite, we can take $\F_q^n$ itself as a finite grid, and in fact many aspects of the theory presented here were worked out in this case in the early part of the 20th century.  In particular, we say a polynomial is \textbf{reduced} if it is $\F_q^n$-reduced, and this
notion was introduced by Chevalley in his seminal work \cite{Chevalley35} on polynomial systems of low degree.  We denote the the set of reduced polynomials by $\mathcal{P}(n,q)$; it is an $\F_q$-vector space of dimension $q^n$.  The evaluation map gives an $\F_q$-linear isomorphism
\[ E: \mathcal{P}(n,q) \ra \F_q^{\F_q^n}, \ f \mapsto (x \in \F_q^n \mapsto f(x)). \]
Fixing an ordering $\alpha_1,\dots,\alpha_{q^n}$ of $\F_q^n$, this isomorphism allows us to identify each $f \in \mathcal{P}(n,q)$ with its \textbf{value table} $(f(\alpha_1),\dots,f(\alpha_{q^n}))$.  For $d \in \N$ we denote by $\mathcal{P}_d(n,q)$ the set of all reduced polynomials of degree at most $d$. \\

\begin{mydef}
The set of all value tables of all polynomials in $\m P_d(n, q)$ is called the $d$-th order generalized Reed--Muller code of length $q^n$, denoted by ${\rm GRM}_d(n, q)$.
\end{mydef}
\noindent \\
For $q = 2$, these codes were introduced and studied by Muller \cite{Muller54} and Reed \cite{Reed54}.
An explicit formula for minimum distance of the generalized Reed--Muller codes was given by Kasami, Lin and Peterson \cite{Kasami--Lin--Peterson68}, which we will recover using Alon--F\"uredi.
A systematic study of these codes in terms of the polynomial formulation was conducted by Delsarte,  Goethals and MacWilliams in \cite{Delsarte-Goethals-MacWilliams70}, where they also classified all the minimum weight codewords.
%Since ${\rm GRM}_d(n, q)$  is a linear code, the %minimum Hamming distance $\delta$ of this code is %equal to the minimum weight of the non-zero 5codewords.
%This corresponds to the minimum number of points %where a non-zero reduced polynomial of degree at most %$d$ doesn't vanish, which is precisely what Theorem %\ref{thm:main} is about.

\begin{thm}[Kasami--Lin--Peterson]
The minimum weight of the $d$-th order Generalized Reed--Muller code ${\rm GRM}_d(n, q)$ is equal to $(q-b)q^{n-a-1}$ where $d = a(q-1) + b$ with $0 < b \leq q-1$.
\end{thm}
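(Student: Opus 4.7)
The plan is to translate the minimum weight of ${\rm GRM}_d(n,q)$ into the language of non-zeros on the grid $\F_q^n$ and then apply the Alon--F\"uredi Theorem directly. Since the evaluation map $E$ is an $\F_q$-linear isomorphism from $\mathcal{P}(n,q)$ onto $\F_q^{\F_q^n}$, the minimum weight of ${\rm GRM}_d(n,q)$ is exactly $\min \# \m U_{\F_q^n}(f)$ taken over nonzero $f \in \mathcal{P}_d(n,q)$. Because $\F_q$ is a field, the grid $A = \F_q^n$ trivially satisfies Condition (D), so Theorem \ref{thm:AF} yields
\[ \# \m U_{\F_q^n}(f) \geq \mm(q,\dots,q;\, nq - \deg f). \]
Since $N \mapsto \mm(q,\dots,q;N)$ is non-decreasing, the infimum of the right-hand side over nonzero $f$ with $\deg f \leq d$ is attained when $\deg f = d$.

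Next I would evaluate $\mm(q,\dots,q;\, nq - d)$ using Lemma \ref{FSLEMMA}(b). Writing $d = a(q-1) + b$ with $0 < b \leq q-1$, one rearranges
\[ nq - d - n \;=\; n(q-1) - d \;=\; (n-a-1)(q-1) + (q-1-b), \]
which exhibits the required Euclidean form with quotient $n-a-1$ and remainder $r = q-1-b \in [0,q-1)$. Lemma \ref{FSLEMMA}(b) then gives $(r+1)\,q^{n-a-1} = (q-b)\,q^{n-a-1}$, matching the asserted lower bound on the minimum weight.

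To see that this bound is attained, I would produce an explicit polynomial along the lines of $\S$\ref{sec:sharp}. Let $\beta_1,\dots,\beta_b$ be any $b$ distinct elements of $\F_q$ (possible because $b \leq q-1 < q$) and set
\[ f \;=\; \prod_{i=1}^{a} \prod_{\alpha \in \F_q \setminus \{0\}} (t_i - \alpha) \; \cdot \; \prod_{j=1}^{b} (t_{a+1} - \beta_j). \]
Then $f$ is reduced (every $\deg_{t_i} f \leq q-1$), has $\deg f = a(q-1) + b = d$, and $f(x) \neq 0$ precisely when $x_1 = \dots = x_a = 0$, $x_{a+1} \notin \{\beta_1,\dots,\beta_b\}$, and $x_{a+2},\dots,x_n$ are arbitrary; this gives $\# \m U_{\F_q^n}(f) = (q-b)\,q^{n-a-1}$. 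The proof is in essence a direct application of Alon--F\"uredi together with one greedy-distribution computation, so I do not foresee a genuine obstacle; the only step requiring modest care is the arithmetic decomposition above, which must be arranged so that the remainder lies in $[0,q-1)$.
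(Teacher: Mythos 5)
Your proof is correct and takes essentially the same approach as the paper's: translate minimum weight into the minimum of $\# \mathcal{U}_{\F_q^n}(f)$ over nonzero reduced $f$ of degree at most $d$, apply Alon--F\"uredi, and compute $\mm(q,\dots,q;nq-d)$ via the identity $(nq-d)-n = (n-a-1)(q-1)+(q-1-b)$ together with Lemma \ref{FSLEMMA}(b). The explicit extremal polynomial you construct to verify sharpness is correct and is a slight completion of the paper's argument, which leaves that step implicit by reference to $\S$\ref{sec:sharp}.
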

\begin{proof}
The minimum weight of ${\rm GRM}_d(n,q)$ is equal
to the least number of nonzero values taken by a
nonzero reduced polynomial of degree at most $d$, which by Alon--F\"uredi is $\mm(q,\dots,q;nq-d)$.  Moreover we have \[(nq - d) - n = n(q-1) - a(q-1) - b = (n - a - 1)(q-1) + q - 1 - b, \] and  \[0 \leq q -1 - b < q - 1, \] so by Lemma \ref{lem:FS} we have
\[ \mm(q,\dots,q;nq-d) = (q-b) q^{n-a-1}. \qedhere \]
%And hence, the total number of points where any %non-zero element of $\m P_d(n, q)$ doesn't vanish is %at least $(q-b)q^{n-a-1}$.
%Since the bound is sharp, this is equal to the %minimum weight.
\end{proof}
\noindent
%In other words, the Alon--F\"uredi Theorem is yet %another polynomial method result with a classical %antecedent in the finite field case: it may be viewed %as the ``Restricted Variable Kasami--Lin--Peterson %Theorem.''
%\\ \\
The Generalized Alon--F\"uredi Theorem can also be stated in terms of coding theory.  Let $A = \prod_{i=1}^n A_i$ be a finite grid in a ring $R^n$ satisfying Condition (D), with $a_i = \# A_i$ for $i \in [n]$.  Given positive integers $b_i \leq a_i$ for all $i \in [n]$, and a natural number $d \leq \sum_{i=1}^n (a_i-b_i)$, we define the \textbf{generalized affine grid code}
${\rm GAGC}_d(A;b_1,\dots,b_n)$ as the set of value tables of all polynomials $f \in R[\underline{t]}$ with $\deg_{t_i} f \leq a_i-b_i$ for all $i \in [n]$ and $\deg f \leq d$ evaluated on $A$.  We put
\[ {\rm AGC}_d(A) = {\rm GAGC}_d(A;1,\dots,1) \]
and speak of \textbf{affine grid codes}.  Then:

\begin{thm} The minimum weight of ${\rm GAGC}_d(A;b_1,\dots,b_n)$ is \\ $\mm(a_1,\dots,a_n;b_1,\dots,b_n;\sum_{i=1}^n a_i - d)$.
\end{thm}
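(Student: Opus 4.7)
The plan is to recognize this theorem as the coding-theoretic restatement of the Generalized Alon--F\"uredi Theorem (Theorem~\ref{thm:main}); essentially all the content has been proved, and the task is to translate.

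First I would unpack the definitions: a codeword of ${\rm GAGC}_d(A;b_1,\dots,b_n)$ is the value table on $A$ of some $f \in R[\underline{t}]$ with $\deg_{t_i} f \leq a_i - b_i$ for all $i$ and $\deg f \leq d$, and its Hamming weight is $\#\m U_A(f)$. Since $b_i \geq 1$, every such $f$ is $A$-reduced; combined with Condition (D), the CATS Lemma tells us that evaluation is injective on this subspace. Hence nonzero codewords correspond bijectively to nonzero polynomials $f$ in the specified degree-restricted set, and the minimum weight equals the minimum of $\#\m U_A(f)$ taken over such nonzero $f$.

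For the lower bound, apply Theorem~\ref{thm:main} to any nonzero $f$ in this set to get
\[ \#\m U_A(f) \geq \mm(a_1,\dots,a_n;b_1,\dots,b_n;\textstyle\sum_{i=1}^n a_i - \deg f). \]
The next step is to replace $\deg f$ by $d$ using monotonicity of $\mm(a_1,\dots,a_n;b_1,\dots,b_n;\cdot)$, which I would justify by a brief combinatorial argument: the function is the constant $\prod b_i$ for $N \leq \sum b_i$, and for $N > \sum b_i$ any minimizing distribution must have some coordinate strictly exceeding the corresponding $b_j$, so decrementing that coordinate produces a distribution of $N-1$ balls with strictly smaller product, giving strict increase on the upper range. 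Since $\deg f \leq d$, monotonicity yields $\#\m U_A(f) \geq \mm(a_1,\dots,a_n;b_1,\dots,b_n;\sum a_i - d)$.

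For matching sharpness I would invoke the construction of $\S$\ref{sec:sharp}. The hypothesis $d \leq \sum (a_i - b_i)$ implicit in the definition of the code forces $\sum a_i - d \geq \sum b_i$, so there exists a distribution $(y_1,\dots,y_n)$ with $b_i \leq y_i \leq a_i$ and $\sum y_i = \sum a_i - d$ attaining $\mm(a_1,\dots,a_n;b_1,\dots,b_n;\sum a_i - d)$; for each $i$ choose $S_i \subset A_i$ with $\#S_i = a_i - y_i$ and set $f = \prod_{i=1}^n \prod_{x_i \in S_i}(t_i - x_i)$. Then $\deg f = d$, $\deg_{t_i} f = a_i - y_i \leq a_i - b_i$, and $\#\m U_A(f) = \prod y_i$, producing a codeword of exactly the target weight. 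There is no serious obstacle; the only items requiring any care are the injectivity of evaluation (exactly why Condition (D) is built into the setup) and the brief monotonicity check for $\mm$, both of which are routine given the earlier sections.
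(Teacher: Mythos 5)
The paper states this theorem without a written proof, treating it as an immediate translation of the Generalized Alon--F\"uredi Theorem into coding-theoretic language; your proposal is exactly that translation, filled in correctly. You correctly identify the three ingredients that need to be made explicit: injectivity of evaluation on $A$-reduced polynomials under Condition (D) (so that nonzero codewords come from nonzero polynomials), monotonicity of $N \mapsto \mm(a_1,\dots,a_n;b_1,\dots,b_n;N)$ (needed to pass from $\deg f$ to $d$, and also used implicitly by the paper in $\S$2.3), and the sharpness construction of $\S$\ref{sec:sharp} for the matching codeword. Your brief argument for monotonicity is sound, and the observation that $d \leq \sum(a_i - b_i)$ guarantees the sharpness construction is available is the right thing to check. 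This is the intended proof; there is nothing to compare beyond the level of detail.
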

\noindent
Affine grid codes were studied in \cite{LRMV14} (under the name of Affine Cartesian Codes) where  they proved the following:

\begin{thm}[{\cite[Thm. 3.8]{LRMV14}}]
\label{LRMVTHM}
Let $F$ be a field and $A = \prod_{i=1}^n A_i \subset F^n$ a finite grid with $\# A_1 \geq \dots \geq \# A_n \geq 1$.
Then the minimum weight of ${\rm AGC}_d(A)$ is \\
$\begin{cases} \# A_1 \cdots \# A_{k-1} (\#A_k - \ell) & \text{ if } d \leq \sum_{i=1}^n(\# A_i-1) -1 \\
1 & \text{ if } d \geq \sum_{i=1}^n (\# A_i-1) \end{cases}$, \\
where $k,\ell \in \Z$ are such that $d = \sum_{i=k+1}^n (\# A_i-1) + \ell$, $k \in [n]$ and
 $\ell \in [\# A_k-1]$.
% $1 \leq \ell \leq \# A_k-1$.

%$\begin{cases} (a_{k+1}-\ell)a_{k+2} \dots a_n & \text{ if } d \leq \sum_{i=1}^n(a_i-1) -1 \\
%1 & \text{ if } d \geq \sum_{i=1}^n (a_i-1) \end{cases}$, \\
%where $k \geq 0, \ell$ are the integers such that $d = \sum_{i=1}^k (a_i-1) + \ell$ and $1 \leq \ell \leq a_{k+1}-1$.
\end{thm}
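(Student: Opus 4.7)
The plan is to derive this theorem as a direct combinatorial corollary of Alon--F\"uredi: the minimum Hamming weight of $\mathrm{AGC}_d(A)$ is the minimum of $\#\mathcal{U}_A(f)$ as $f$ ranges over nonzero polynomials in $F[\underline{t}]$ of degree at most $d$. Since $F$ is a field, every finite grid in $F^n$ satisfies Condition (D), so Theorem \ref{thm:AF} together with the monotonicity of $\mm$ in its last argument yields the lower bound
\[ \#\mathcal{U}_A(f) \geq \mm(\#A_1,\ldots,\#A_n; \textstyle\sum_{i=1}^n \#A_i - d), \]
and the explicit product construction of \S\ref{sec:sharp} (with $b_i = 1$) exhibits a polynomial of degree exactly $d$ achieving equality. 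Thus the minimum weight of $\mathrm{AGC}_d(A)$ equals this $\mm$-value, and the theorem reduces to evaluating it via Lemma \ref{FSLEMMA}(d).

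Set $a_i = \#A_i$ and $N = \sum_{i=1}^n a_i - d$. For the second case of the theorem, if $d \geq \sum_{i=1}^n(a_i - 1)$ then $N \leq n$, so the greedy distribution places at most one ball in each bin and produces $\mm = 1$; this is either the $N < n$ convention from \S\ref{sec:balls}, or the $j=n$, $r=0$ boundary of Lemma \ref{FSLEMMA}(d) when $N = n$.

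For the first case, I translate the parametrization: given $d = \sum_{i=k+1}^n(a_i - 1) + \ell$ with $k \in [n]$ and $\ell \in [a_k - 1]$, a short computation yields
\[ N - n = \sum_{i=1}^{k-1}(a_i - 1) + (a_k - 1 - \ell), \]
and since $0 \leq a_k - 1 - \ell < a_k$ and $a_1 \geq \cdots \geq a_n$, Lemma \ref{FSLEMMA}(d) applies with $j = k-1$ and $r = a_k - 1 - \ell$, giving
\[ \mm(a_1,\ldots,a_n;N) = (r+1)\prod_{i=1}^{k-1} a_i = (a_k - \ell)\prod_{i=1}^{k-1} a_i, \]
exactly as claimed. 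There is no genuine obstacle here beyond bookkeeping: the only subtlety is matching the $(k,\ell)$-parametrization of the theorem statement to the $(j,r)$-parametrization of Lemma \ref{FSLEMMA}(d) and confirming the boundary cases $k = 1$ (empty product) and $\ell = a_k - 1$ (so $r = 0$).
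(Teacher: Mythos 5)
Your proposal is correct and takes essentially the same route as the paper. The paper's proof is a one-liner: it cites the immediately preceding theorem (minimum weight of ${\rm GAGC}_d(A;b_1,\dots,b_n)$ equals $\mm(a_1,\dots,a_n;b_1,\dots,b_n;\sum a_i - d)$, specialized to $b_i=1$) and then reads off the greedy distribution $(\#A_1,\dots,\#A_{k-1},\#A_k-\ell,1,\dots,1)$ via Lemma \ref{lem:FS}; you re-derive the "minimum weight $=\mm$" fact from Alon--F\"uredi plus the sharpness construction (which is exactly what underlies the cited theorem), and you unpack the $(k,\ell)\leftrightarrow(j,r)$ translation and the $N\leq n$ boundary case a bit more explicitly. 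Same argument, slightly more verbose.
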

%Let $F$ be a field, and let $f \in F[t_1, \dots, %t_n]$ be a non-zero polynomial of total degree at %most $d$.
%For $i \in [n]$, define $a_i := |A_i|$ and assume %$a_1 \geq \dots \geq a_n$.
%If for all $i \in [n]$ we have $\deg_{t_i} f < a_i$, %and $d = \sum_{i = k+1}^n (a_i - 1) +  l$ for some %$k, l$ such that $0 \leq k \leq n$, $1 \leq l \leq %a_k - 1$, then
%\[|Z_A(f)| \leq a_1 \dots a_{k-1}(a_k \dots a_n - %a_k + l).\]
%\end{thm}
\begin{proof}
The minimum weight of ${\rm AGC}_d(A)$ is $\mm(\#A_1,\dots,\#A_n;\sum_{i=1}^n \# A_i - d)$. So the result follows from Lemma \ref{lem:FS}, as the greedy distribution of \[\sum_{i = 1}^n \# A_i - d = \sum_{i = 1}^{k-1} (\#A_i - 1) + (\# A_k - 1 - \ell) + n\] balls is $(\# A_1, \dots, \#A_{k-1}, \# A_k - \ell, 1, \dots, 1)$.
\end{proof}
\begin{remark}
\begin{enumerate}[$(a)$]
\item The paper \cite{LRMV14} makes no mention of Alon--F\"uredi.  Their proof of Theorem \ref{LRMVTHM} is self-contained and thus gives a proof of Alon--F\"uredi with the balls in bins constant replaced by its explicit value $P(y_G)$.  On the other hand it is longer than the other proofs of Alon--F\"uredi appearing in the literature. 
%We wish to note that C. Carvalho has given another proof of Theorem %\ref{LRMVTHM} using the theory of Gr\"obner Bases in \cite{Carvalho13}. \\
\item Our proof of Theorem \ref{LRMVTHM} works for a grid $A \subset R^n$ satisfying Condition (D).  
%\\
%c) Since $\mathfrak{m}(a_1, \dots, a_n; b_1, \dots, b_n; N) \geq %\mathfrak{m}(a_1, \dots, a_n; N)$ the ${\rm GAGC}_d(A)$ has a larger %minimum Hamming distance than ${\rm AGC}_d(A)$.
%So, in practice these codes might be better! \\
\item When $b_1 \geq \dots \geq b_n$, the greedy algorithm computes $\mm(a_1,\dots,a_n;b_1,\dots,b_n;N)$ and we could give a similarly explicit description of ${\rm GAGC}_d(A_1;b_1,\dots,b_n)$. 
%\\ {\color{red}{Should we??}}.
\item While (binary) Reed--Muller codes are mentioned in \cite{Alon--Furedi93} under Corollary 1, the connection between  Alon--F\"uredi Theorem and generalised Reed--Muller codes is not explored. 
\end{enumerate}
\end{remark}

\section{Applications to Finite Geometry}
\label{sec:finite_geometry}

\subsection{Partial Coverings of Grids by Hyperplanes}
%Let $R$ be a ring and let $A = \prod_{i=1}^n A_i %\subset R^n$ be a finite grid, with $a_i = \# A_i$.
By a \textbf{hyperplane} in $R^n$ we mean a polynomial $H = c_1 t_1 + \dots + c_n t_n + r \in R[\underline{t}]$ for which at least one $c_i$ is not a zero-divisor.  (Referring to the polynomial itself rather than its zero locus in $R^n$ will make the discussion cleaner.)  A family $\mathcal{H} = \{H_i\}_{i=1}^d$ \textbf{covers} $x \in R^n$ if $H_i(x) = 0$ for some
$i \in [n]$; $\mathcal{H}$ \textbf{covers} a subset $S \subset R^n$ if it covers every point of $S$, and $\mathcal{H}$ \textbf{partially covers} $S$ otherwise. For a family $\mathcal{H} = \{H_i\}_{i=1}^d$ of hyperplane in $R^n$, put
\[ f_{\mathcal{H}} = \prod_{i=1}^d H_i. \]
Thus $f_{\mathcal{H}}$ is a polynomial of degree $d$.  If $\mathcal{H}$ covers $A$, then $f$ vanishes identically on $A$.  If $R$ is a domain the converse holds, and thus $\mathcal{H}$ covers $A$ iff $f_{\mathcal{H}} \in \langle \varphi_1,\dots,\varphi_n \rangle$.
We now revisit the original combinatorial problem studied by Alon and F\"uredi, which is part c) of the following theorem.  However, our proof is via Theorem \ref{thm:AF} instead of the approach used in \cite{Alon--Furedi93}.

% showcases the use \of grid reduction.

\begin{thm}
\label{thm:hyperplanes}
Let $R$ be a domain, let $A = \prod_{i=1}^n A_i \subset R^n$ be a finite grid, and let $\mathcal{H} = \{H_i\}_{i = 1}^d$ be family of
hyperplanes in $R^n$. 
\begin{enumerate}[$(a)$]
\item If $\mathcal{H}$ partially covers $A$, then $\mathcal{H}$ fails to cover at least \\ $\mm(\# A_1,\dots,\# A_n;\sum_{i=1}^n \# A_i - d)$ points of $A$.  
\item For all $d \in \Z^+$, there is a family of hyperplane $\{H_i = t_{j_i}-x_i\}_{i=1}^d$ with
$j_i \in [n]$ and $x_i \in A_i$ which covers all but exactly 
$\mm(\#A_1,\dots,\# A_n;\sum_{i=1}^n \# A_i - d)$ points of $A$. 
% In other words, hyperplanes which are parallel to %the coordinate axes achieve the most efficient %partial covering by any given number of hyperplanes. %\\
\item If $\mathcal{H}$ covers all but exactly one point of $A$, then $d \geq \sum_{i=1}^n (\# A_i-1)$.
\end{enumerate}
\end{thm}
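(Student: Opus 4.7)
The plan is to derive all three parts directly from the Alon--F\"uredi Theorem (Theorem \ref{thm:AF}), with no need to revisit Chevalley--Warning or $A$-reduced polynomials. Write $a_i = \#A_i$ and $N = \sum_{i=1}^n a_i - d$. The key translation is that $f_{\mathcal{H}} = \prod_{i=1}^d H_i \in R[\underline{t}]$ is a polynomial of degree $d$ whose zero locus on $A$ is exactly the set of points of $A$ covered by $\mathcal{H}$: this uses that $R$ is a domain, so $f_{\mathcal{H}}(x) = 0$ iff $H_i(x) = 0$ for some $i$.

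For part $(a)$, the partial-cover hypothesis means some $x \in A$ is not covered, equivalently $f_{\mathcal{H}}(x) \neq 0$, so $f_{\mathcal{H}}$ does not vanish identically on $A$. Theorem \ref{thm:AF} then yields
\[ \#\mathcal{U}_A(f_{\mathcal{H}}) \geq \mm(a_1,\dots,a_n;N), \]
and by the observation above the left-hand side is exactly the number of uncovered points.

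For part $(b)$, I would mimic the sharpness construction of \S\ref{sec:sharp}. If $d \leq \sum_i (a_i-1)$, let $y_G = (y_1,\dots,y_n)$ be the greedy distribution realising $\mm(a_1,\dots,a_n;N)$, so $1 \leq y_i \leq a_i$ and $\sum y_i = N$. For each $i$, pick any $S_i \subset A_i$ with $\#S_i = a_i - y_i$ and take the family $\{t_i - x : x \in S_i,\ i \in [n]\}$. This is a family of $\sum(a_i - y_i) = d$ hyperplanes of the stated form, and its uncovered set is $\prod_i (A_i \setminus S_i)$, of size $\prod_i y_i = \mm$. For $d > \sum_i(a_i-1)$ we have $N < n$ and $\mm = 1$ by the convention of \S\ref{sec:balls}; run the construction with $d' = \sum_i(a_i-1)$, which leaves a single point uncovered, and pad with $d - d'$ repetitions of any hyperplane already in the family, which preserves the uncovered set.

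Part $(c)$ falls out of $(a)$ at once: if $\mathcal{H}$ covers all but exactly one point of $A$, then $\mathcal{H}$ partially covers $A$ and $(a)$ forces $1 \geq \mm(a_1,\dots,a_n;N)$. Every admissible distribution satisfies $\prod_i y_i \geq 1$, with equality only when every $y_i = 1$, hence $\sum y_i = n$; combined with the convention $\mm = 1$ when $N < n$, this gives $N \leq n$, i.e., $d \geq \sum_i(a_i-1)$. This provides a more direct route to the original Alon--F\"uredi covering theorem than the $A$-reduced argument sketched in the remark of \S\ref{sec:grid}. I do not anticipate any real obstacle: the only step requiring anything beyond a direct application of Theorem \ref{thm:AF} is the padding in $(b)$, which works because $\mathcal{H}$ is indexed rather than a set, so repetitions are permitted.
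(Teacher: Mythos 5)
Your proof is correct and follows essentially the same route as the paper: translate covering into vanishing of $f_{\mathcal{H}} = \prod H_i$ (valid over a domain), apply Theorem \ref{thm:AF} for $(a)$, invoke the sharpness construction of \S\ref{sec:sharp} for $(b)$, and for $(c)$ observe that $1 \geq \mm(a_1,\dots,a_n;N)$ forces $N \leq n$ — you argue this directly, where the paper cites Lemma \ref{LEMMA2.2}, but these are the same observation. One small point in your favor: for $(b)$ the paper simply refers to \S\ref{sec:sharp}, which only treats $0 \leq d \leq \sum_i(a_i-1)$; your padding-by-repetition step for $d > \sum_i(a_i-1)$ (legitimate since $\mathcal{H}$ is an indexed family, not a set) closes a gap the paper leaves implicit.
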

\begin{proof}
%For $i \in [n]$, put $a_i = \# A_i$.  \\
$(a)$ As above, $\mathcal{H}$ covers $x \in R^n$ iff $f_{\mathcal{H}}(x) = 0$.  Apply Alon--F\"uredi.  \\
$(b)$ The sharpness construction of $\S$\ref{sec:sharp} is precisely of this form. \\
$(c)$ If $\mathcal{H}$ covers all points of $A$ except one, then \[ 1 = \#\mathcal{U}_A(H_1 \cdots H_d) \geq \mm(\# A_1,\dots,\# A_n;\sum_{i=1}^n \# A_i - d), \]
so Lemma \ref{LEMMA2.2} gives $\sum_{i=1}^n \# A_i - d \leq n$, i.e. $d \geq \sum_{i=1}^n (\#A_i-1)$.
\end{proof}
\noindent
We complement Theorem \ref{thm:hyperplanes} by computing the minimum cardinality of a hyperplane covering of a finite grid (not necessarily specifying Condition (D)) over a ring $R$.

\begin{thm}
\label{NEWPOTUTHM}
Let $A = \prod_{i=1}^n A_i \subset R^n$ be a finite grid, and let $\mathcal{H} = \{H_i\}_{i=1}^d$ be a hyperplane covering of $A$.  Then $d \geq \min \# A_i$.
\end{thm}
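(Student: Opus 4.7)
The plan is to prove the bound by a simple double-counting argument; no invocation of Alon--F\"uredi or of Condition (D) is required. Since $\mathcal{H}$ covers $A$, each $x \in A$ satisfies $H_i(x) = 0$ for at least one $i \in [d]$, so counting incident pairs $(x,i)$ with $H_i(x) = 0$ yields
\[ \# A \;\leq\; \sum_{x \in A} \#\{ i : H_i(x) = 0\} \;=\; \sum_{i=1}^d \# Z_A(H_i). \]

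The next step is to bound $\# Z_A(H_i)$ for a single hyperplane. Write $H_i = c_{i,1} t_1 + \dots + c_{i,n} t_n + r_i$; by the definition of hyperplane, there exists some coordinate $j_i \in [n]$ for which $c_{i,j_i}$ is not a zero divisor. For each fixed tuple $(x_k)_{k \neq j_i} \in \prod_{k \neq j_i} A_k$, the equation $H_i(x_1,\dots,x_n) = 0$ takes the form $c_{i,j_i} t_{j_i} = \alpha$ for some $\alpha \in R$; if $y, y' \in A_{j_i}$ were two solutions, then $c_{i,j_i}(y-y') = 0$ together with the hypothesis on $c_{i,j_i}$ would force $y = y'$. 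Hence at most one element of $A_{j_i}$ works per choice of the remaining coordinates, which gives
\[ \# Z_A(H_i) \;\leq\; \prod_{k \neq j_i} \# A_k \;=\; \frac{\# A}{\# A_{j_i}} \;\leq\; \frac{\# A}{\min_j \# A_j}. \]

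Combining the two displays, $\# A \leq d \cdot \# A / \min_j \# A_j$, and dividing yields $d \geq \min_j \# A_j$, as desired. There is no serious obstacle: the argument is essentially the observation that a hyperplane occupies at most a $1/\# A_{j_i}$ fraction of the grid in any direction in which its coefficient is not a zero divisor, combined with a union-bound style count of the covering. The one subtlety is that we \emph{must} invoke the hyperplane hypothesis (some $c_{i,j}$ is not a zero divisor); without it, a single linear polynomial can vanish on several points of $A_{j}$, as already happens for $2t \in (\Z/6\Z)[t]$, which has two zeros in $\{0,3\}$.
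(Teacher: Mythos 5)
Your proof is correct and takes essentially the same route as the paper's: both bound $\# Z_A(H)$ for a single hyperplane by $\prod_{k \neq j} \# A_k$ (where $j$ indexes a coordinate with a non-zero-divisor coefficient) via the observation that each fiber of the projection forgetting the $j$-th coordinate meets $Z(H)$ in at most one point, and then conclude by a union bound over the $d$ hyperplanes. The only cosmetic difference is that the paper first remarks that, under Condition (D), the bound is also immediate from the polynomial method (a product of fewer than $\min \# A_i$ hyperplane polynomials is a nonzero $A$-reduced polynomial), before giving the same projection argument you use for the general case.
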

\begin{proof}
%For $i \in [n]$, put $a_i = \# A_i$.  \\ \indent
 First we observe that if $A$ satisfies Condition (D) then the result is almost immediate: going by contraposition, if $d \leq \# A_i-1$ for all $i \in [n]$ then $f_{\mathcal{H}}$ is nonzero and $A$-reduced, so it cannot vanish identically on $A$. \\ \indent
Now we give a non-polynomial method proof in the general case.  Without loss of generality assume $\# A_1 \geq \dots \geq \dots \geq \# A_n$.  We {\sc claim} that any hyperplane $H = \sum_{i=1}^n c_i t_i + g$ covers at most $\prod_{i=1}^{n-1} \# A_i$ points of $A$: this suffices, for then $d \geq \# A_n$.
\\
{\sc proof of claim:} Fix $i \in [n]$ such that $c_i$ is not a zero-divisor in $R$.  Let $\pi: R^n \ra R^{n-1}$ be the projection
$(x_1,\dots,x_n) \mapsto (x_1,\dots,x_{i-1},x_{i+1},\dots,x_n)$.  Then
\[ A = \coprod_{x' = (x_1,\dots,x_{i-1},x_{i+1},\dots,x_n) \in \pi(A)} \{x_1 \} \times \dots \times \{x_{i-1}\} \times A_i \times \{x_{i+1}\} \times \dots \{x_n\} \]
is a partition of $A$ into $\# \pi(A) = \prod_{j \neq i} \# A_j$ nonempty subsets, each one of which meets $H$ in at most one point.  So
\[ \# \left( Z(H) \cap A \right) \leq \prod_{j \neq i} \# A_j \leq \prod_{i=1}^{n-1} \# A_i. \qedhere \]
\end{proof}

\begin{conj}
\label{CONJ1}
Let $R$ be a ring, and let $A_1,\dots,A_n \subset R$ be nonempty (but possibly infinite).  Let $\mathcal{H} = \{H_j\}_{j \in J}$ be a covering of the grid $A = \prod_{i=1}^n A_i$ by hyperplanes.  Then $\# J \geq \min_{i=1}^n \# A_i$.
\end{conj}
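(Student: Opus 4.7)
The plan is to reduce Conjecture \ref{CONJ1} to the finite case established in Theorem \ref{NEWPOTUTHM} by passing to finite sub-grids. Write $\kappa := \min_i \#A_i$ and $\lambda := \#J$; the assertion is $\lambda \geq \kappa$ as cardinals. Suppose for contradiction that $\lambda < \kappa$. If $\lambda$ is finite, choose finite subsets $A_i' \subseteq A_i$ with $\#A_i' \geq \lambda + 1$ for every $i$, which is possible since $\#A_i \geq \kappa > \lambda$. The family $\mathcal{H}$ still covers the finite sub-grid $A' = \prod_i A_i'$ with $\lambda$ hyperplanes, so Theorem \ref{NEWPOTUTHM} applied to $A'$ yields $\lambda \geq \min_i \#A_i' \geq \lambda + 1$, a contradiction. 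This settles the conjecture whenever $\kappa \leq \aleph_0$, since then any counterexample would force $\lambda$ to be finite.

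The essential remaining case is $\kappa$ uncountable with $\aleph_0 \leq \lambda < \kappa$. Here the counting bound underlying Theorem \ref{NEWPOTUTHM} is useless, since each hyperplane's bound of $\prod_{i \neq i^*} \#A_i$ equals $\#A$ itself for infinite factors. My proposed approach is induction on $n$. The case $n = 1$ is immediate: a hyperplane $ct_1 + r$ has $c$ a non-zero-divisor, hence at most one root in $R$, giving $\lambda \geq \#A_1$. For the inductive step with $n \geq 2$, fix a coordinate $i^*$ and slice $A$ into the sets $A^{(x)} := \prod_{i < i^*} A_i \times \{x\} \times \prod_{i > i^*} A_i$ indexed by $x \in A_{i^*}$. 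Partition $\mathcal{H}$ into the \emph{vertical} hyperplanes (those vanishing identically on some slice) and the \emph{transverse} ones. The goal is to show $(i)$ each transverse hyperplane restricts on every slice to a hyperplane of the lower-dimensional grid $\prod_{i \neq i^*} A_i$; $(ii)$ the induction hypothesis applied to the transverse family of size $\leq \lambda < \kappa$ then forces at least one slice to remain uncovered by the transverse hyperplanes alone; and $(iii)$ each vertical hyperplane covers a unique value of $x \in A_{i^*}$. Together these give $\#A_{i^*} \leq \lambda < \kappa \leq \#A_{i^*}$, a contradiction.

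The main obstacle is executing the induction in the presence of zero-divisors. Under the additional hypothesis that each $A_i$ satisfies Condition (D), the CATS Lemma applied to the slice grid $\prod_{i \neq i^*} A_i$ shows that $H_j = \sum_i c_{j,i} t_i + g_j$ vanishes on $A^{(x)}$ precisely when $c_{j,i} = 0$ for all $i \neq i^*$ and $c_{j,i^*} x + g_j = 0$; vertical hyperplanes then have the clean form $c t_{i^*} + r$ and cover a unique slice each, delivering step $(iii)$. Step $(i)$ remains subtle when the only non-zero-divisor coefficient of a transverse $H_j$ is $c_{j, i^*}$, since the restriction may then be a nonzero degree-one polynomial with all remaining coefficients zero-divisors -- hence not a hyperplane in the paper's sense. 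A clean induction likely requires either a strengthening of Theorem \ref{NEWPOTUTHM} covering such ``degenerate'' linear forms, or a choice of $i^*$ tailored to each hyperplane; beyond Condition (D), vertical hyperplanes can behave even more exotically, and I expect the cleanest route is first to prove Conjecture \ref{CONJ1} under Condition (D) and subsequently bootstrap to the general ring case by reducing modulo appropriate annihilator ideals.
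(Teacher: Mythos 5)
The statement you have been asked to prove is labelled a conjecture in the paper (Conjecture \ref{CONJ1}), and the paper does not prove it.  Rather, the remark that follows records two partial results: (a) restricting a hyperplane covering of $A$ to finite subgrids $B_i \subseteq A_i$ and applying Theorem \ref{NEWPOTUTHM} shows that at least $\min(\min_i \# A_i, \aleph_0)$ hyperplanes are always needed, so the conjecture holds whenever every $A_i$ is countable (in particular when $R$ is countable); and (b) when $R$ is a field and $A = R^n$, the conjecture follows from an earlier result of Clark cited there.

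Your first paragraph reproduces partial result (a) by exactly the same restriction-to-finite-subgrids argument; you phrase the conclusion as ``the conjecture holds when $\kappa \leq \aleph_0$'' rather than ``when $R$ is countable,'' but the underlying bound $\# J \geq \min(\kappa,\aleph_0)$ is the same.  Your remaining two paragraphs are a frank, incomplete sketch for the genuinely open case of uncountable $\kappa$ with $\aleph_0 \leq \# J < \kappa$.  The difficulty you single out in step (i) is real: a transverse hyperplane may restrict on a slice to a nonzero linear form all of whose surviving coefficients are zero divisors, so the restriction fails to be a hyperplane in the sense required by Theorem \ref{NEWPOTUTHM} or by any naive inductive hypothesis.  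I would also flag that step (ii) as stated is weaker than what the argument needs: if one chooses the slicing coordinate $i^*$ where the minimum $\kappa$ is attained and if step (i) could be secured, the inductive hypothesis would show the transverse family covers \emph{no} slice, so every slice would have to be contained in some vertical hyperplane; step (iii) (each vertical contains at most one slice, valid under Condition (D) when all slice factors have size at least $2$) would then give $\# A_{i^*} \leq \# J < \kappa \leq \# A_{i^*}$, a contradiction.  None of this closes the gap in step (i), and you rightly stop short of claiming a proof.  Since the conjecture remains open in the paper, the correct assessment is that your proposal establishes the same partial case that the paper records and identifies, without resolving, obstructions that leave the general case unsettled.
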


\begin{remark}
\begin{enumerate}[$(a)$]
\item For $i \in [n]$, let $B_i \subset A_i \subset R$.
Then we need at least as many hyperplanes to cover $\prod_{i=1}^n A_i$ as we do to cover $\prod_{i=1}^n B_i$.  Together with Theorem \ref{NEWPOTUTHM} it follows that in the setting of Conjecture \ref{CONJ1} we need at least $\min(\# A_i,\aleph_0)$ hyperplanes. Thus Conjecture \ref{CONJ1} holds when $R$ is countable.  
\item When $R$ is a field and $A = R^n$, Conjecture
\ref{CONJ1} is a case of \cite[Thm. 3]{Clark12}.
\end{enumerate}
\end{remark}

%\begin{remark}
%Theorem \ref{thm:hyperplanes}a) seems to be new.  It %bears some relation to the fact that over any field %$F$, the minimum cardinality of a covering of $F^n$ %by hyperplanes is $\# F$ \cite[Thm. 6]{Clark12}.  The %intersection of these two results is the case $R = %A_1 = \dots = A_n = \F_q$ of Theorem %\ref{thm:hyperplanes}a).  But this is a trivial case: %for any hyperplane $H$ in $\F_q^n$, $\# Z(H) = %%q^{n-1}$, so one needs $q$ hyperplanes for %cardinality reasons alone.
%\end{remark}

%\noindent
%Let $H_1,\dots,H_d \in R[\underline{t}]$ be %hyperplanes that cover all points of $A$ except one.  %Then
%\[ 1 = \mathcal{U}_A(H_1 \dots H_d) \geq \mm(\# %A_1,\dots,\# A_n;\sum_{i=1}^n \# A_i - d), \]
%so Lemma \ref{LEMMA2.2} gives $\sum_{i=1}^n \# A_i - %d \leq n$, i.e.,
%\[ d \geq \sum_{i=1}^n (\# A_i-1). \]
%Moreover the construction of $\S$3.3 specializes to %choosing a point $x \in A$ and giving a family of %$\sum_{i=1}^n (\# A_i-1)$ hyperplanes which covers %every element of $A$ except $x$.

\subsection{Partial Covers and Blocking Sets in Finite Geometries}
\label{sec:partial_blocking}
The same ideas can be used to prove old and new results about Desarguesian projective and affine spaces over finite fields.
\\ \\
Let $\mathrm{PG}(n,q)$ denote the $n$-dimensional projective space over $\mb F_q$ (the same object would in some other circles be denoted by $\mathbb{P}^n(\F_q)$) and let $\mathrm{AG}(n,q)$ denote the $n$-dimensional affine space over $\mb F_q$ (resp. $\mathbb{A}^n(\F_q)$).  The set $\mathrm{AG}(n,q)$ comes equipped with a sharply transitive action of the additive group of $\F_q^n$ and thus a choice of a point $x \in \mathrm{AG}(n,q)$ induces an isomorphism $\mathrm{AG}(n,q) \cong \F_q^n$.  We will make such identifications without further comment.
\\ \\
A \textbf{partial cover} of $\mathrm{PG}(n,q)$ is a set of hyperplanes that do not cover all the points.  The points missed by a partial cover are called  \textbf{holes}.

\begin{thm}
%\label{thm:partial_cover}
\label{PARTIALCOVERTHM}
Let $\m H$ be a partial cover of $\mathrm{PG}(n,q)$ of size $k \in \Z^+$.
Then $\m H$ has at least $\mm(q, \dots, q; nq - k + 1)$ holes.
\end{thm}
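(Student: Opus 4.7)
The plan is to reduce the projective problem to an application of Alon--F\"uredi on the affine space $\F_q^n$, using a hyperplane of $\mathcal{H}$ itself as the hyperplane at infinity.

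More precisely, since $k \geq 1$, I would pick an arbitrary hyperplane $H_\infty \in \mathcal{H}$ and identify the complement $\mathrm{PG}(n,q) \setminus H_\infty$ with $\mathrm{AG}(n,q) \cong \F_q^n$. Every other $H \in \mathcal{H} \setminus \{H_\infty\}$ meets $\mathrm{AG}(n,q)$ in an affine hyperplane of $\F_q^n$; this gives a family $\mathcal{H}'$ of at most $k-1$ affine hyperplanes in $\F_q^n$. A point of $\mathrm{AG}(n,q)$ is covered by $\mathcal{H}$ if and only if it is covered by $\mathcal{H}'$, and every hole of $\mathcal{H}$ lies in $\mathrm{AG}(n,q)$ since $H_\infty$ itself covers all of its own points. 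In particular the existence of a hole for $\mathcal{H}$ guarantees that $\mathcal{H}'$ is a (proper) partial cover of $\F_q^n$.

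Now I would invoke Theorem \ref{thm:hyperplanes}(a) applied to the grid $A = \F_q^n$ (so $\#A_i = q$ for each $i$) and the partial cover $\mathcal{H}'$ of size $d \leq k-1$. That result yields
\[ \#(\text{holes of } \mathcal{H}) = \#(\text{holes of } \mathcal{H}' \text{ in } \F_q^n) \geq \mm(q,\dots,q;\, nq - d) \geq \mm(q,\dots,q;\, nq - k + 1), \]
where the last inequality uses that $\mm(q,\dots,q;N)$ is non-increasing in $N$ (fewer balls to distribute can only lower the minimum product, and in the prefilled formulation this is immediate from Lemma \ref{LEMMA2.2} together with the greedy description in Lemma \ref{lem:FS}).

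There is really no substantive obstacle here: the only subtle point is to observe that the hole(s) lie off $H_\infty$ and to handle the case where some $H \in \mathcal{H} \setminus \{H_\infty\}$ coincides with $H_\infty$ at infinity (in which case $\mathcal{H}'$ has strictly fewer than $k-1$ hyperplanes, which only strengthens the bound). The essential move is the \emph{choice} to throw away a hyperplane from $\mathcal{H}$ rather than an arbitrary one; this lets the affine version of Alon--F\"uredi produce the bound with parameter $k-1$ rather than $k$, which is what gives the $nq - k + 1$ inside $\mm$.
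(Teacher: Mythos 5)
Your proof is correct and follows essentially the same route as the paper's: remove one hyperplane $H$ from $\mathcal{H}$ to serve as the hyperplane at infinity, note that every hole lies in $\mathrm{PG}(n,q)\setminus H \cong \mathrm{AG}(n,q)$, and apply the affine Alon--F\"uredi bound (Theorem \ref{thm:hyperplanes}(a)) to the remaining $k-1$ hyperplanes. The only cosmetic difference is that you hedge about $\mathcal{H}'$ possibly having fewer than $k-1$ members; in fact distinct hyperplanes of $\mathrm{PG}(n,q)$ other than $H_\infty$ restrict to distinct affine hyperplanes, so $\#\mathcal{H}' = k-1$ exactly, and that extra monotonicity step is unnecessary (though harmless).
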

\begin{proof}
Let $H \in \mathcal{H}$.  Then $\mathrm{PG}(n,q) \setminus H \cong  \mathrm{AG}(n,q)$ so $\mathcal{H} \setminus H$ is a partial cover of $\F_q^n$ by $k-1$ hyperplanes.  As above, there are at least $\mm(q, \dots,q; nq - (k-1))$ points not covered by $\m H$.
\end{proof}
%\noindent
%The following corollary improves one of the main %results of \cite{DSV10} on the minimum number of %holes of a partial cover of size $q + a$.

\begin{cor}
\label{PARTIALCOVERCOR}
If $0 \leq a < q$, a partial cover of $\mathrm{PG}(n,q)$ of size $q+a$ has at least $q^{n-1} - aq^{n-2}$ holes.
\end{cor}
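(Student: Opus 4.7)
The plan is to apply Theorem~\ref{PARTIALCOVERTHM} with $k = q + a$, and then evaluate the resulting balls-in-bins minimum explicitly using Lemma~\ref{lem:FS}(b). Theorem~\ref{PARTIALCOVERTHM} directly yields that the number of holes is at least
\[
\mm(q,\ldots,q;\, nq - (q + a) + 1) \;=\; \mm(q,\ldots,q;\, (n-1)q - a + 1),
\]
with $n$ copies of $q$ in the first argument.

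To finish, I would set $N = (n-1)q - a + 1$ and rewrite $N - n = (n-1)(q-1) - a$ in the normal form $j(q-1) + r$ with $0 \leq r < q - 1$ required by Lemma~\ref{lem:FS}(b). When $a = 0$, we get $j = n-1$, $r = 0$, so $\mm(q,\ldots,q; N) = q^{n-1}$, which equals $q^{n-1} - a q^{n-2}$. When $1 \leq a \leq q - 1$, writing $(n-1)(q-1) - a = (n-2)(q-1) + (q-1-a)$ gives $j = n-2$ and $r = q - 1 - a$, so Lemma~\ref{lem:FS}(b) yields $(r+1) q^{n-2} = (q - a) q^{n-2} = q^{n-1} - a q^{n-2}$, matching the desired bound.

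The only point of genuine care is the case split at $a = 0$: the decomposition $N - n = j(q-1) + r$ behaves slightly differently depending on whether $a$ vanishes, so one must separate the two subcases to apply Lemma~\ref{lem:FS}(b) directly. Apart from this small bookkeeping, the corollary is a direct unpacking of Theorem~\ref{PARTIALCOVERTHM} together with the explicit formula for the balls-in-bins minimum in the equal-bin-size setting, and so presents no real obstacle.
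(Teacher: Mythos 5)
Your proof is correct and follows essentially the same route as the paper: invoke Theorem~\ref{PARTIALCOVERTHM} with $k = q+a$ and then evaluate $\mm(q,\ldots,q;(n-1)q - a + 1)$ explicitly. The only difference is cosmetic: the paper identifies the greedy distribution $(q,\dots,q,q-a,1)$ directly via Lemma~\ref{lem:FS}(a), which handles $a=0$ and $a \geq 1$ uniformly, whereas you invoke the closed formula of Lemma~\ref{lem:FS}(b) and therefore have to split cases to put $N-n = (n-1)(q-1) - a$ into the canonical form $j(q-1)+r$ with $0 \leq r < q-1$; both computations are correct and land on $q^{n-1} - aq^{n-2}$.
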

\begin{proof}
By Theorem \ref{PARTIALCOVERTHM} there are at least $\mm(q,\dots,q;(n-1)q-a + 1)$ holes.  Since $0 \leq a < q$, the greedy distribution is $(q,\dots,q,q-a,1)$, and the result follows.
\end{proof}
\noindent
Dodunekov, Storme and Van de Voorde have shown that a partial cover of $\mathrm{PG}(n,q)$ of size $q+a$ has at least $q^{n-1} - a q^{n-2}$ holes if $0 \leq a < \frac{q-2}{3}$ \cite[Thm. 17]{DSV10}.  Corollary \ref{PARTIALCOVERCOR} gives an improvement in that the restriction on $a$ is relaxed.
They also show that if $a < \frac{q - 2}{3}$ and the number of holes are at most $q^{n-1}$, then they are all contained in a single hyperplane.
We cannot make any such conclusions from our arguments.
% They also show that if $a < \frac{q-}there are at most $q^{n-1}$ holes then all the holes %are contained in a single hyperplane.
\\ \\
Projective duality yields a dual form of Theorem \ref{PARTIALCOVERTHM}: $k$ points in $\mathrm{PG}(n,q)$ which do not meet all hyperplanes must miss at least $\mm(q, \dots, q; nq - k + 1)$ of them.  Thus:
%This leads to the following result.

\begin{thm}
\label{thm:affine}
Let $S$ be a set of $k$ points in $\mathrm{AG}(n,q)$.
Then there are at least $\mm(q, \dots, q; nq - k + 1) - 1$ hyperplanes of $\mathrm{AG}(n,q)$ which do not meet $S$.
\end{thm}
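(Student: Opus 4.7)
The plan is to deduce this directly from Theorem~\ref{PARTIALCOVERTHM} by invoking projective duality and then passing from projective to affine space. First I would dualize Theorem~\ref{PARTIALCOVERTHM}: the standard point-hyperplane correspondence in $\mathrm{PG}(n,q)$ reverses incidence, so a set of $k$ hyperplanes missing at least $m$ points translates to a set of $k$ points that is missed by at least $m$ hyperplanes. In particular, any set of $k$ points in $\mathrm{PG}(n,q)$ that fails to meet every hyperplane is avoided by at least $\mm(q,\dots,q;nq-k+1)$ hyperplanes.

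Next I would fix an embedding $\mathrm{AG}(n,q) \hookrightarrow \mathrm{PG}(n,q)$ with complementary hyperplane at infinity $H_\infty$, so that the affine hyperplanes of $\mathrm{AG}(n,q)$ are exactly the restrictions of the projective hyperplanes of $\mathrm{PG}(n,q)$ different from $H_\infty$. Given $S \subset \mathrm{AG}(n,q)$ with $\# S = k$, view $S$ as a subset of $\mathrm{PG}(n,q)$; by construction $S \cap H_\infty = \emptyset$, so $S$ is automatically not a blocking set in the projective sense, which is exactly the hypothesis needed to apply the dualized form of Theorem~\ref{PARTIALCOVERTHM}.

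Applying that dual form to $S$, at least $\mm(q,\dots,q;nq-k+1)$ projective hyperplanes avoid $S$. Exactly one of these, $H_\infty$, is not an affine hyperplane; all of the others restrict to affine hyperplanes of $\mathrm{AG}(n,q)$ disjoint from $S$. Subtracting $1$ for $H_\infty$ yields the claimed bound of $\mm(q,\dots,q;nq-k+1)-1$. The only substantive point is verifying that duality faithfully transports the ``partial cover'' hypothesis of Theorem~\ref{PARTIALCOVERTHM} into the ``not a blocking set'' hypothesis of its dual while preserving the numerology; after that, the affine-to-projective passage and the single subtraction for $H_\infty$ are pure bookkeeping, so I do not anticipate a serious obstacle.
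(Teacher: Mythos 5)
Your proposal is correct and follows essentially the same route as the paper's (very terse) proof: embed $\mathrm{AG}(n,q)$ in $\mathrm{PG}(n,q)$, apply the dualized form of Theorem~\ref{PARTIALCOVERTHM} to $S$, and subtract one for the hyperplane at infinity $H_\infty$, which is necessarily one of the hyperplanes missing $S$. The only difference is that you spell out the duality and the bookkeeping more explicitly than the paper does.
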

\begin{proof}
Add a hyperplane at infinity to get to the setting of $\mathrm{PG}(n,q)$ and then apply the dual form of Theorem \ref{PARTIALCOVERTHM}.
\end{proof}
\noindent
The general problem of the number of linear subspaces missed by a given set of points in $\mathrm{PG}(n,q)$ is studied by Metsch in \cite{Metsch06}.
We wish to note that Theorem \ref{thm:affine} gives the same bounds as Part $(a)$ of \cite[Theorem 1.2]{Metsch06} for the specific case when the linear subspaces are hyperplanes.% and the number of points $k$ is at most $q^{n-1}$.
%\todo{Compare the bounds}
\\ \\
A \textbf{blocking set} in $\mathrm{AG}(n,q)$ or $\mathrm{PG}(n,q)$ is a set of points that meets every hyperplane.  The union of the coordinate axes in $\F_q^n$ yields a blocking set in $\mathrm{AG}(n,q)$ of size $n(q-1) + 1$.  Doyen conjectured in a 1976 Oberwolfach lecture that $n(q-1)+1$ is the least possible size of a blocking set in $\mathrm{AG}(n,q)$.  A year later two independent proofs appeared, by Jamison \cite{Jamison77}, and then a (simpler) proof by Brouwer and Schrijver \cite{Brouwer--Schrijver78}.  We are in a position to give another proof.

\begin{cor}[Jamison--Brouwer--Schrijver]
The minimum size of a blocking set in $\mathrm{AG}(n,q)$ is $n(q-1)+1$.
\end{cor}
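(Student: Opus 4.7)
The plan is to deduce both directions quickly from material already developed. The upper bound is a construction that has essentially been exhibited in the paragraph preceding the statement: the union $S$ of the $n$ coordinate axes of $\F_q^n$ has $n(q-1)+1$ points (each axis contributes $q-1$ nonzero points plus the shared origin). I would verify that $S$ is a blocking set by observing that every affine hyperplane $\sum_{i=1}^n c_i t_i = d$ contains either the origin (when $d=0$) or, choosing any $i$ with $c_i \neq 0$, the point whose sole nonzero coordinate is $d/c_i$ in position $i$ (when $d \neq 0$). So the real work is in establishing the matching lower bound.

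For the lower bound, let $S \subset \mathrm{AG}(n,q)$ be an arbitrary blocking set of size $k$. I would apply Theorem \ref{thm:affine} directly: since $S$ meets every hyperplane, no hyperplane of $\mathrm{AG}(n,q)$ misses $S$, so
\[ 0 \geq \mm(q, \dots, q; nq - k + 1) - 1. \]
As a product of positive integers the balls-in-bins quantity satisfies $\mm(q, \dots, q; nq - k + 1) \geq 1$ always, so in fact $\mm(q, \dots, q; nq - k + 1) = 1$.

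The remaining step is to translate this equality into the desired bound on $k$. From the definition of $\mm$ given in $\S$\ref{sec:balls} one has $\mm(q,\dots,q;N) = 1$ iff $N \leq n$: the case $N < n$ is the convention, and for $N \geq n$ any distribution $(y_1, \dots, y_n)$ satisfies $\prod y_i \geq 1$ with equality iff every $y_i = 1$, which forces $N = n$. Substituting $N = nq - k + 1 \leq n$ yields $k \geq n(q-1)+1$, which together with the construction above gives the claimed equality. There is no genuine obstacle to carrying this out; the only subtlety worth tracking is the $-1$ appearing in Theorem \ref{thm:affine}, which accounts for the hyperplane at infinity being automatically disjoint from any set of affine points and which is precisely what makes the threshold in $\mm$ occur at $N = n$ rather than at $N = n-1$.
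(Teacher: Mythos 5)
Your proof is correct and follows essentially the same route as the paper: both directions hinge on Theorem \ref{thm:affine} combined with the observation (Lemma \ref{LEMMA2.2}, which you re-derive rather than cite) that $\mm(q,\dots,q;N)=1$ exactly when $N\leq n$. The paper phrases the lower bound as a contradiction starting from a hypothetical blocking set of size at most $n(q-1)$, while you argue directly from an arbitrary blocking set and force $\mm=1$, but the content is identical.
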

\begin{proof}
Let $B \subset \mathrm{AG}(n,q)$ be a blocking set of cardinality at most $n(q-1)$.  By Theorem \ref{thm:affine} and Lemma \ref{LEMMA2.2} there are at least \[\mm(q, \dots, q; nq - n(q-1) + 1) - 1 = \mm(q, \dots, q; n + 1) - 1 \geq 1 \] hyperplanes which do not meet $B$.
\end{proof}
\noindent
Turning now to $\mathrm{PG}(n,q)$, every line is a blocking set.  But classifying blocking sets that do not contain any line is one of the major open problems in finite geometry. For a survey on blocking sets in finite projective spaces, see \cite[Chapter 3]{DeBeule-Storme11}.
\\ \\
If $B \subset \mathrm{PG}(n,q)$, $x \in B$ and $H$ is a hyperplane in $\mathrm{PG}(n,q)$, then \textbf{$H$ is a tangent to B through x} if $H \cap B = \{x\}$.  An \textbf{essential point} of a blocking set $B$ in $\mathrm{PG}(n,q)$ is a point $x$ such that $B \setminus \{x\}$ is not a blocking set.  A point $x$ of $B$ is essential if and only if there is a tangent hyperplane to $B$ through $x$.

 % that $H \cap B = \{x\}$, i.e., $H$ is tangent to %$B$.
%We now prove a general lower bound on the number of %tangent hyperplanes through an essential point of a %blocking set in $\mathrm{PG}(n,q)$.

\begin{thm}
\label{THM7.5}
Let $B$ be a blocking set in $\mathrm{PG}(n,q)$ and $x$ an essential point of $B$.
There are at least $\mm(q, \dots, q; nq - \# B + 2)$ tangent hyperplanes to $B$ through $x$.
\end{thm}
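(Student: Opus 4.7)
The plan is to reduce to an affine question about point sets missing hyperplanes, and then invoke Theorem \ref{thm:affine} on the complement of a chosen tangent.

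First, since $x$ is essential, I would fix a tangent hyperplane $H_0$ through $x$, so that $H_0 \cap B = \{x\}$. Identifying $\mathrm{AG}(n,q)$ with $\mathrm{PG}(n,q) \setminus H_0$, this choice has the useful consequence that $B \cap \mathrm{AG}(n,q) = B \setminus \{x\}$, a set of $\#B - 1$ points in $\mathrm{AG}(n,q)$. Theorem \ref{thm:affine} then gives at least
\[ \mm(q,\dots,q;nq-(\#B-1)+1) - 1 = \mm(q,\dots,q;nq-\#B+2) - 1 \]
affine hyperplanes of $\mathrm{AG}(n,q)$ which do not meet $B \setminus \{x\}$.

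Next, I would set up the bijection between affine hyperplanes of $\mathrm{AG}(n,q)$ and projective hyperplanes of $\mathrm{PG}(n,q)$ distinct from $H_0$: each affine hyperplane $\alpha$ is the restriction to $\mathrm{AG}(n,q)$ of its projective closure $\bar\alpha \neq H_0$. Under this bijection, $\alpha \cap (B \setminus \{x\}) = \emptyset$ translates to $\bar\alpha \cap B \subseteq \{x\}$; since $B$ is a blocking set, $\bar\alpha \cap B \neq \emptyset$, forcing $\bar\alpha \cap B = \{x\}$. So each of the counted affine hyperplanes lifts to a tangent hyperplane through $x$ that is different from $H_0$.

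Finally, adding $H_0$ itself back into the count, one obtains at least
\[ \bigl(\mm(q,\dots,q;nq-\#B+2) - 1\bigr) + 1 = \mm(q,\dots,q;nq-\#B+2) \]
tangent hyperplanes to $B$ through $x$, which is the claimed bound. The only potential pitfall is keeping the correspondence between affine and projective hyperplanes (and the role of $H_0$ itself) straight, but once the choice of $H_0$ is made the whole argument is essentially a one-line application of Theorem \ref{thm:affine} combined with the blocking property of $B$.
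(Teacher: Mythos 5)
Your argument is correct and follows essentially the same approach as the paper: fix a tangent hyperplane $H_0$ through $x$, identify its complement with $\mathrm{AG}(n,q)$ so that $B \setminus \{x\}$ becomes a set of $\#B - 1$ affine points, invoke Theorem \ref{thm:affine}, use the blocking property to force each resulting hyperplane's projective closure to be a tangent through $x$, and add $H_0$ back in. You are a bit more explicit than the paper in spelling out the bijection between affine hyperplanes and projective hyperplanes distinct from $H_0$ (hence the disjointness that justifies the ``$+1$''), but the proof is the same in substance.
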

\begin{proof}
Let $H$ be a tangent hyperplane to $B$ through $x$.  Then $B' = B \setminus \{x\} \subset \mathrm{PG}(n,q) \setminus H \cong \mathrm{AG}(n,q)$.  By Theorem \ref{thm:affine} there are at least $\mm(q, \dots, q; nq - \#B + 2) - 1$ hyperplanes in $\mathrm{AG}(n,q)$ that do not meet $B'$.
Since $B$ is a blocking set all of these hyperplanes, when seen in $\mathrm{PG}(n,q)$, must meet $x$.  Thus there are at least $\mm(q, \dots, q; nq - \#B + 2)$ tangent hyperplanes to $B$ through $x$.
\end{proof}

\begin{cor}[Blokhuis--Brouwer \cite{Blokhuis--Brouwer86}]
Let $B$ be a blocking set in $\mathrm{PG}(2, q)$ of size $2q-s$.  There are at least $s + 1$ tangent
lines through each essential point of $B$.
\end{cor}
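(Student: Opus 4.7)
The plan is a direct application of Theorem \ref{THM7.5} followed by evaluating the ``balls in bins'' quantity $\mm(q,q;s+2)$ via the greedy distribution given in Lemma \ref{FSLEMMA}.

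First I would specialize Theorem \ref{THM7.5} to $n=2$ and $\#B = 2q-s$. This immediately produces at least
\[ \mm(q,q;\,2q - (2q-s) + 2) = \mm(q,q;\, s+2) \]
tangent lines through each essential point $x$ of $B$. So the entire task reduces to showing $\mm(q,q;s+2) \geq s+1$ under the hypotheses in effect.

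Next I would compute $\mm(q,q;s+2)$. With two bins of capacity $q$ prefilled with one ball each, after the mandatory prefilling there remain $s$ balls to distribute greedily. Provided $s \leq q-1$, the greedy distribution dumps all $s$ remaining balls into the first bin, producing $(s+1,1)$ with product $s+1$; by Lemma \ref{FSLEMMA}$(d)$ (applied with $j=0$, $r=s$, $a_{j+1}=q$) this is the exact value of $\mm(q,q;s+2)$.

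The only thing to check is the range condition $s \leq q-1$. This is where I would invoke the classical fact that a blocking set in $\mathrm{PG}(2,q)$ has at least $q+1$ points (indeed any line is one, and any blocking set must meet every line). Hence $2q-s = \#B \geq q+1$ forces $s \leq q-1$, which is exactly the regime in which the greedy computation above is valid, and we conclude that $x$ is incident with at least $s+1$ tangent lines. No step here is a real obstacle; the content of the corollary has been absorbed into Theorem \ref{THM7.5}, and what remains is simply to unpack the balls-in-bins constant in the planar case and to note the elementary lower bound $\#B \geq q+1$.
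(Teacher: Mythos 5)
Your proof is correct and follows essentially the same route as the paper's: apply Theorem \ref{THM7.5}, reduce to computing $\mm(q,q;s+2)$ via the greedy distribution $(s+1,1)$, and justify $s\leq q-1$ by the lower bound $\#B\geq q+1$ for blocking sets in $\mathrm{PG}(2,q)$. One minor quibble: your parenthetical ``indeed any line is one, and any blocking set must meet every line'' does not actually \emph{prove} $\#B\geq q+1$ (the first clause only shows the bound is attained, and the second is just the definition); the paper supplies the one-line argument — pick a point $x\in\mathrm{PG}(2,q)\setminus B$, note $B$ must meet each of the $q+1$ lines through $x$, and those intersection points are pairwise distinct since the lines meet only at $x\notin B$.
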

\begin{proof}
By Theorem \ref{THM7.5}, each essential point of
$B$ has at least \[\mm(q,q;2q-(2q-s)+2) = \mm(q,q;s+2)\] tangent lines.  Since $\# B = 2q-s < q^2 + q+ 1 = \# \mathrm{PG}(2,q)$, there is $x \in \mathrm{PG}(2,q) \setminus B$.  There are $q+1$ lines through $x$, so $2q-s = \# B \geq q+1$.  Thus $s+1 \leq q$, so the greedy distribution is $(s+1,1)$ and $\mm(q, q; s + 2) = s+1$.
\end{proof}

\begin{cor}{\cite[Theorem $7$]{DSV10}}
If $0 \leq a < q$, there are at least $q^{n-1} - aq^{n-2}$ tangent hyperplanes through each essential point of a blocking set of size $q+a+1$ in $\mathrm{PG}(n,q)$.
%The number of tangent hyperplanes through an %essential point of a blocking set $B$ of size %$q+a+1$, $|B| \leq 2q$, in $\mathrm{PG}(n,q)$ is at least %$q^{n-1} - aq^{n-2}$.
\end{cor}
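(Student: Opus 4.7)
The plan is to invoke Theorem \ref{THM7.5} directly and then evaluate the balls-in-bins quantity explicitly via the greedy distribution. Applying Theorem \ref{THM7.5} to a blocking set $B$ with $\# B = q+a+1$ yields that the number of tangent hyperplanes through an essential point is at least
\[ \mm(q,\dots,q; nq - (q+a+1) + 2) = \mm(\underbrace{q,\dots,q}_{n}; (n-1)q - a + 1). \]
So the whole task reduces to showing
\[ \mm(\underbrace{q,\dots,q}_{n}; (n-1)q - a + 1) = q^{n-1} - a q^{n-2}. \]

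To evaluate this quantity, I will appeal to Lemma \ref{FSLEMMA}(a), which says that the minimum is attained by the greedy distribution. After the mandatory prefilling of one ball per bin, we are left with $N - n = (n-1)(q-1) - a$ balls to distribute, filling bins from left to right. Since $0 \leq a < q$, one has $0 \leq (n-1)(q-1)-a \leq (n-1)(q-1)$, so the greedy algorithm fills the first $n-2$ bins completely (contributing $(n-2)(q-1)$ balls) and then places the remaining $q-1-a$ balls into bin $n-1$, bringing its total to $q-a$; bin $n$ stays at $1$. Thus the greedy distribution is
\[ y_G = (\underbrace{q,\dots,q}_{n-2},\, q-a,\, 1), \]
whose product is $(q-a) \cdot q^{n-2} = q^{n-1} - a q^{n-2}$, as required.

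The only mild care needed is to verify that this description of $y_G$ is valid at the boundary values $a=0$ and $a=q-1$. When $a=0$ we have $q-a=q$ and $y_G=(q,\dots,q,1)$ with $n-1$ copies of $q$, giving product $q^{n-1}=q^{n-1}-0\cdot q^{n-2}$; when $a=q-1$ we have $q-a=1$ and $y_G=(q,\dots,q,1,1)$ with $n-2$ copies of $q$, giving $q^{n-2}=q^{n-1}-(q-1)q^{n-2}$. So the formula $q^{n-1}-aq^{n-2}$ holds uniformly on the entire range $0\le a<q$, and there is no real obstacle; the argument is essentially a one-line computation of the greedy product sitting on top of Theorem \ref{THM7.5}.
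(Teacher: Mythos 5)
Your proof is correct and follows the paper's own argument: apply Theorem \ref{THM7.5} and then evaluate $\mm(q,\dots,q;(n-1)q-a+1)$ via the greedy distribution $(q,\dots,q,q-a,1)$, which is exactly the computation the paper performs (by reference to the proof of Corollary \ref{PARTIALCOVERCOR}). The extra boundary-case check is harmless but not needed.
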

\begin{proof}
By Theorem \ref{THM7.5} and the proof of Corollary \ref{PARTIALCOVERCOR}, each essential point of
$B$ has at least $\mm(q,\dots,q;nq-(q+a+1)+2) = q^{n-1}-aq^{n-2}$ tangent hyperplanes.
%The quantity $\mm(q, \dots, q; nq - (q + a + 1) + 2)$ %is equal to $(q-a)q^{n-2}$ for $a \leq q - 1$.
\end{proof}

%\appendix
\section{Multiplicity Enhancements}
\label{sec:mult}
\noindent
That one can assign to a zero of a polynomial a positive integer called \textbf{multiplicity} is a familiar concept in the univariate case.  The definition of the multiplicity $m(f,x)$ of a multivariate polynomial $f \in R[\underline{t}]$ at a point $x \in R^n$ (see $\S$7.2) may be less familiar, but the concept is no less useful.  All of the main results considered thus far are upper bounds on $\# Z_A(f)$, the number of zeros of a polynomial $f$ on a finite grid.  By a \textbf{multiplicity enhancement} we mean the replacement of $\# Z_A(f)$ by $\sum_{x \in A} m(f,x)$ in such an upper bound.  The prototypical example: for a nonzero univariate polynomial $f$ over a field $F$ we have $\sum_{x \in F} m(f,x) \leq \deg f$.
\\ \\
Recently, multiplicity enhancements have become part of the polynomial method toolkit. In \cite{DKSS13} Dvir, Kopparty, Saraf and Sudan gave a multiplicity enhancement of the Schwartz--Zippel Lemma.  This was a true breakthrough with important applications in both combinatorics and theoretical computer science.  In $\S$4 we saw that the original work of Schwartz, DeMillo--Lipton and Zippel consists of more than the Schwartz--Zippel Lemma and gave some extensions of this work, in particular working over an arbitrary ring.  So it is natural to consider multiplicity enhancements of these results.  We do so here, giving a multiplicity enhancement of
Theorem \ref{SCHWARTZ} and thus also of Theorem \ref{LOWDEGREEAFTHM}.  On the other hand the Alon--F\"uredi Theorem does not allow for a multiplicity enhancement (at least not in the precise sense described above), as we will see in Example \ref{EXAMPLE3}.
\\ \\
%This section is of a somewhat more technical %character than the rest of the paper.  Already the %definition of multiplicity in the multivariable case %involves some preliminary machinery.
This is a situation where working over a ring under Condition (D) makes things a bit harder.
%even the univariate case is nontrivial!  
Lemma \ref{SOUPEDROOTFACTORLEMMA} pushes through the single variable root-factor phenomenon under Condition (D).
\\ \\
In places our treatment closely follows that of \cite{DKSS13}.  We need to set things up over a ring, whereas they work over a field.  Nevertheless, their work carries over verbatim much of the time, and when this is the case we state the result in the form we need it, cite the analogous result in \cite{DKSS13} and omit the proof.

%\section{Appendix: Multiplicity Enhancements}
%\noindent
%BLAH

\subsection{Hasse Derivatives}
\label{sec:Hasse}
Let $R[\underline{t}] = R[t_1,\dots,t_n]$.  For $I = (i_1,\dots,i_n) \in \N^n$, put
\[ \underline{t}^I = t_1^{i_1} \cdots t_n^{i_n}\]
and $|I| = \sum_{j = 1}^n i_j = \deg \underline{t}^I$.
Thus, $\{\underline{t}^I\}_{I \in \N^n}$ is an $R$-basis for
$R[\underline{t}]$.
We put
\[ {I \choose J} = \prod_{k=1}^n {i_k \choose j_k}, \]
taking ${i \choose j} = 0$ if $j > i$.
\\ \\
For $J \in \N^n$, let $D^J: R[\underline{t}] \ra R[\underline{t}]$ be the unique $R$-linear map
such that
\[ D^J(\underline{t}^I) = {I \choose J} \underline{t}^{I-J}. \]
We have $D^J(\underline{t}^I) = 0$, unless $J \leq I$.
Repeated application of the identity
\[ t^n = (t-x+x)^n = \sum_{j=0}^n {n \choose j} x^{n-j} (t-x)^j \]
leads to the Taylor expansion: for $f \in R[\underline{t}]$
and $x \in R^n$,
\begin{equation}
\label{TAYLOREXP}
 f(\underline{t}) = \sum_J D^J(f)(x) (\underline{t}-x)^J.
\end{equation}
Applying the automorphism $\underline{t} \mapsto \underline{t}+x$ gives
the alternate form
\[ f(\underline{t}+x) = \sum_J D^J(f)(x) \underline{t}^J. \]
These $D^J(f)$ were defined in \cite{Hasse36} and are now called \textbf{Hasse derivatives}.

\begin{prop}[{\cite[Prop. 2.3]{DKSS13}}]
Let $f \in R[\underline{t}]$, and let $I,J \in \N^n$.  
\begin{enumerate}[$(a)$]
\item If $f$ is homogeneous of degree $d$ and $D^I(f)$ is non-zero, then $D^I(f)$ is homogeneous of degree $d - |I|$.  
\item We have
\[ D^J(D^I (f)) = {I+J \choose I} D^{I+J}(f). \]
\end{enumerate}
\end{prop}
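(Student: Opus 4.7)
The plan is to reduce both parts to checking the identity on a single monomial $\underline{t}^K$ via the $R$-linearity of each operator $D^J$, and then turn the multivariate statements into coordinate-wise facts about binomial coefficients.

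For part (a), since $f$ is homogeneous of degree $d$, we may write $f = \sum_{|K|=d} c_K \underline{t}^K$ with $c_K \in R$. By definition $D^I(\underline{t}^K) = \binom{K}{I}\underline{t}^{K-I}$ when $I \le K$ and vanishes otherwise, so every surviving term in $D^I(f)$ is a scalar multiple of $\underline{t}^{K-I}$ with $|K - I| = d - |I|$. Hence $D^I(f)$ is either zero or homogeneous of degree $d - |I|$. This step is essentially formal and should not present difficulty.

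For part (b), by $R$-linearity of both sides in $f$, it is enough to verify the identity when $f = \underline{t}^K$ for an arbitrary $K \in \N^n$. A direct computation gives
\[ D^J(D^I(\underline{t}^K)) = \binom{K}{I}\binom{K-I}{J}\,\underline{t}^{K-I-J} \]
(with the convention that $\binom{K-I}{J} = 0$ unless $I + J \le K$), while
\[ \binom{I+J}{I} D^{I+J}(\underline{t}^K) = \binom{I+J}{I}\binom{K}{I+J}\,\underline{t}^{K-I-J}. \]
So the claim reduces to the multivariate identity $\binom{K}{I}\binom{K-I}{J} = \binom{I+J}{I}\binom{K}{I+J}$. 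Because all three binomial symbols factor over coordinates by the definition of $\binom{\cdot}{\cdot}$, this in turn reduces to the scalar identity $\binom{k}{i}\binom{k-i}{j} = \binom{i+j}{i}\binom{k}{i+j}$ for non-negative integers $k,i,j$, which is the standard ``choosing $i$ then $j$ from $k$ equals choosing $i+j$ from $k$ and then splitting'' identity and is immediate from the factorial formula when $i+j \le k$ (and trivially $0 = 0$ otherwise).

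The only place where one might worry is in handling the boundary cases coordinate-by-coordinate, i.e., confirming that both sides vanish simultaneously when some $i_\ell + j_\ell > k_\ell$; this is clear since $\binom{K-I}{J} = 0$ forces the left-hand side to vanish, while the same inequality forces $\binom{K}{I+J} = 0$ on the right. No subtlety due to working over a ring $R$ rather than a field arises, since the binomial coefficients are integers acting on $R$ through the unique ring map $\Z \to R$.
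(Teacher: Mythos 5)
Your proof is correct. Note that the paper does not actually supply a proof of this proposition: the surrounding text explains that results which carry over verbatim from \cite{DKSS13} are stated in the needed form with the proof omitted, and this is one of them. Your argument — reduce both parts by $R$-linearity to a single monomial $\underline{t}^K$, observe in (a) that each surviving term $\underline{t}^{K-I}$ has degree $|K|-|I| = d-|I|$, and reduce (b) coordinatewise to the scalar identity $\binom{k}{i}\binom{k-i}{j} = \binom{i+j}{i}\binom{k}{i+j}$ — is the standard and essentially unique route, and matches the argument in \cite{DKSS13}. Your handling of the boundary cases (both sides vanish together when $i_\ell + j_\ell > k_\ell$) and the remark that the integer binomial coefficients act on $R$ through the unique map $\Z \to R$, so nothing changes over a general commutative ring, address precisely the two points one needs to check.
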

%\begin{proof}
%The argument of \cite[Prop. 2.3]{DKSS13} carries over verbatim.
%\end{proof}
%In both cases it is enough to check the identity on basis elements %$\underline{t}^K$.
%This makes part a) immediate.  As for part b), we have
%\[ (D^I \underline{t}^K)^J - D^{I+J} (\underline{t}^K) = D^J\left({K %\choose I} \underline{t}^{K-I}\right) - {K \choose I+J} %\underline{t}^{K-I-J} \] \[ =
%\left( {K \choose I} {K-I \choose J} - {K \choose I+J} \right) %\underline{t}^{K-I-J} = 0. \qedhere\]
%\end{proof}

\begin{lemma}[Leibniz Rule]
\label{LEIBNIZLEMMA}
Let $n =1$, $i \in \N$ and $g,h \in R[t]$.  Then we have 
\begin{equation}
\label{LEIBNIZLEMMAEQ}
 D^i(gh) = \sum_{j=0}^i D^j(g) D^{i-j}(h). 
\end{equation}
\end{lemma}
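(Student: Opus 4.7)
The plan is to reduce the identity to monomials and then invoke the Vandermonde convolution, or equivalently to read it off from the Taylor expansion of a product. I would present the Taylor approach because it gives the cleanest derivation and parallels the setup already used in the paper.

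First I would note that both sides of \eqref{LEIBNIZLEMMAEQ} are $R$-bilinear in $(g,h)$ and that $D^i$ is $R$-linear, so by expanding $g$ and $h$ in the monomial basis it suffices to verify the identity when $g = t^a$ and $h = t^b$ for nonnegative integers $a,b$. In that case the left-hand side is $D^i(t^{a+b}) = \binom{a+b}{i} t^{a+b-i}$, while the right-hand side collects to $\left(\sum_{j=0}^i \binom{a}{j}\binom{b}{i-j}\right) t^{a+b-i}$, so the claim reduces to the Vandermonde identity $\binom{a+b}{i} = \sum_{j=0}^i \binom{a}{j}\binom{b}{i-j}$, which holds in $\mathbb{Z}$ and hence in any ring.

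A slicker route, which avoids even invoking Vandermonde separately, is to use the Taylor expansion \eqref{TAYLOREXP}. In one variable this reads $f(t+x) = \sum_{j \geq 0} D^j(f)(x)\, t^j$ as an identity in $R[t,x]$. Applying this to $g$, $h$, and $gh$ and comparing coefficients of $t^i$ in the identity
\[ \sum_{i \geq 0} D^i(gh)(x)\, t^i \;=\; (gh)(t+x) \;=\; g(t+x)\, h(t+x) \;=\; \sum_{i \geq 0}\Big(\sum_{j=0}^i D^j(g)(x) D^{i-j}(h)(x)\Big) t^i \]
(which lives in $R[t,x]$) yields, for every $i$ and every $x$, the polynomial identity obtained by substituting a formal variable for $x$. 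Since the coefficients of $t^i$ are polynomials in $x$ and these polynomials agree coefficient-wise in $R[x]$, we recover \eqref{LEIBNIZLEMMAEQ} verbatim.

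There is no real obstacle here: the main point to watch is simply that one must not appeal to ordinary derivatives or to division by $i!$, since we are working over an arbitrary commutative ring with possible torsion. The Taylor expansion \eqref{TAYLOREXP} is a purely combinatorial identity of binomial coefficients and therefore is characteristic-free, and the Vandermonde identity is likewise an integer identity, so both approaches go through without any assumption on $R$.
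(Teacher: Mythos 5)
Your first sketch (reduce to monomials by bilinearity, then invoke Vandermonde) is exactly the proof in the paper. Your preferred second route, via the formal Taylor expansion, is a genuinely different argument and a nice one: rather than invoking Vandermonde as a separate combinatorial input, you read the Leibniz rule off the identity $f(t+x) = \sum_{j\geq 0} D^j(f)(x)\,t^j$ in $R[t,x]$ together with the fact that the substitution $t \mapsto t+x$ is a ring homomorphism, so $(gh)(t+x) = g(t+x)h(t+x)$. This is more conceptual, and it actually subsumes Vandermonde (take $g=t^a$, $h=t^b$) rather than depending on it. Two small points to tighten if you write this up. First, the paper states \eqref{TAYLOREXP} with $x$ an element of $R^n$; for your argument you need it as a formal identity in $R[t,x]$ with $x$ an indeterminate, which is fine since the derivation via the binomial expansion of $(t-x+x)^n$ is itself a formal identity, but this should be said. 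Second, your phrase ``for every $i$ and every $x$'' is a bit misleading: comparing coefficients of $t^i$ on both sides gives an equality of elements of $R[x]$ (not just an equality in $R$ for each $x \in R$), and it is this polynomial identity, after renaming $x$ to $t$, that yields \eqref{LEIBNIZLEMMAEQ}. Reasoning pointwise over $x \in R$ would not in general suffice over an arbitrary commutative ring, since distinct polynomials can induce the same function.
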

\begin{proof}
Step 1: Recall Vandermonde's Identity: for $m,n,r \in \N$, we have 
\[ {m+n \choose i} = \sum_{j=0}^i {m \choose j}{n \choose i-j}. \]
(If we have a set consisting of $m$ red balls and $n$ blue balls, then for $0 \leq j \leq i$, the number of $i$ element 
subsets containing exactly $j$ red balls is ${m \choose j}{n \choose i-j}$, so $\sum_{j=0}^i {m \choose j}{n \choose i-j}$ is the total number of $i$ element subsets of an $m+n$ element set.) \\
Step 2: 
Using Vandermonde's Identity, we get
\[ D^i(t^m t^n) = {m+n \choose i} t^{m+n-i} =   \sum_{j=0}^i{m \choose j}{n \choose i-j} t^{m+n-i} =  \sum_{j=0}^i D^j(t^m) D^{i-j}(t^{m+n-i}). \]
By $R$-linearity of the Hasse Derivatives, this establishes (\ref{LEIBNIZLEMMAEQ}).
\end{proof}

\subsection{Multiplicities}
\label{sec:multiplicities}
\textbf{} \\ \\ \noindent
Let $f \in R[\underline{t}]$ be nonzero and $x \in R^n$.  The
\textbf{multiplicity of f at x},
denoted $m(f,x)$, is the natural number $m$ such that $D^J(f)(x) = 0$
for all $J$ with $|J| < m$ and $D^J(f)(x) \neq 0$ for some $J$ with $|J| = m$.  We put $m(0,x) = \infty$ for all $x \in R^n$.

\begin{lemma}[{\cite[Lemma 2.4]{DKSS13}}]
\label{0.5}
For $f \in R[\underline{t}]$, $x \in R^n$ and $I \in \N^n$, we have
\[ m(D^I (f),x) \geq m(f,x) - |I|. \]
\end{lemma}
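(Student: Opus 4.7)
The plan is to reduce this directly to the composition identity for Hasse derivatives proved just above, namely
\[ D^J(D^I(f)) = \binom{I+J}{I} D^{I+J}(f). \]
Setting $m = m(f,x)$, I want to show that $D^J(D^I(f))(x) = 0$ for every $J \in \N^n$ with $|J| < m - |I|$; this, by the definition of multiplicity, is exactly the claim $m(D^I(f),x) \geq m - |I|$.

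First I would dispose of the degenerate cases. If $f = 0$ then $D^I(f) = 0$ and both sides of the inequality are $\infty$, so the bound is vacuous. If $m \leq |I|$, the right-hand side $m - |I|$ is non-positive while the left-hand side is a non-negative integer (or $\infty$), so the inequality holds trivially. So I may assume $f \neq 0$ and $m > |I|$, where there is actual content.

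Now suppose $J \in \N^n$ satisfies $|J| < m - |I|$. Then $|I+J| = |I|+|J| < m$, so by the definition of $m(f,x) = m$ we have $D^{I+J}(f)(x) = 0$. Applying the composition identity and evaluating at $x$ gives
\[ D^J(D^I(f))(x) = \binom{I+J}{I}\, D^{I+J}(f)(x) = 0, \]
which is exactly what we needed. Taking the infimum over all such $J$ yields $m(D^I(f), x) \geq m(f,x) - |I|$.

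There isn't really a hard step here: the entire proof rests on the composition formula for Hasse derivatives together with bookkeeping on $|I|+|J|$, and works identically over an arbitrary ring $R$ because the binomial coefficient $\binom{I+J}{I}$ only appears as a scalar multiplier of something that already vanishes. The only subtlety worth flagging in the write-up is the edge case $m \leq |I|$, which must be addressed since $m - |I|$ could otherwise be read as a positive lower bound; handling it via the convention that multiplicities are non-negative (or $\infty$) keeps the proof a single line after the reduction.
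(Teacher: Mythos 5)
Your proof is correct and is essentially the standard argument: the paper itself omits the proof and simply cites \cite[Lemma 2.4]{DKSS13}, and the argument there proceeds exactly as yours does, reducing the claim to the composition identity $D^J(D^I(f)) = \binom{I+J}{I}D^{I+J}(f)$ and the vanishing of $D^{I+J}(f)(x)$ when $|I+J| < m(f,x)$. One trivial polish: the final ``taking the infimum'' phrasing is not quite what you want --- you have directly verified the defining condition for $m(D^I(f),x) \geq m(f,x) - |I|$ (all Hasse derivatives of $D^I(f)$ of order below $m(f,x)-|I|$ vanish at $x$, and if $D^I(f)=0$ the multiplicity is $\infty$), so no infimum is being taken.
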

%\begin{proof}
%The argument of \cite[Lemma 2.4]{DKSS13} carries over verbatim.
%Let $J \in \N^n$ be such that $|I| + |J| < m(f,x)$.  Then
%\[ D^J(D^I f)(x) = {I+J \choose I} (D^{I+J} f)(x) = 0. \qedhere \]
%\end{proof}
\noindent
Given a vector $\underline{f} = (f_1,\dots,f_k) \in R[\underline{t}]^k$, we put $m(\underline{f},x) = \min_{1 \leq j \leq k}
m(f_j,a)$.

\begin{prop}[{\cite[Prop. 2.5]{DKSS13}}]
\label{DKSS2.5}
\label{0.6}
Let $X_1,\dots,X_n,Y_1,\dots,Y_{\ell}$ be independent indeterminates.
Let $\underline{f} = (f_1,\dots,f_k) \in R[X_1,\dots,X_n]^k$ and
let $\underline{g} = (g_1,\dots,g_n) \in R[Y_1,\dots,Y_{\ell}]^n$.
We define $\underline{f} \circ \underline{g} \in R[Y_1,\dots,Y_{\ell}]^k$
to be $\underline{f}(g_1,\dots,g_n)$.  
\begin{enumerate}[$(a)$]
\item For any $a \in R^{\ell}$ we have
\[ m(\underline{f} \circ \underline{g},a) \geq m(\underline{f},\underline{g}(a)) m(\underline{g}-\underline{g}(a),a). \]
\item We have
\[ m(\underline{f} \circ \underline{g},a) \geq m(\underline{f},\underline{g}(a)). \]
\end{enumerate}
\end{prop}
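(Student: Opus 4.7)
The plan is to reduce (a) to a general \emph{product rule for multiplicities} and then derive (b) as a trivial corollary. Set $b = \underline{g}(a) \in R^n$, and put $M_1 = m(\underline{f},b)$ and $M_2 = m(\underline{g}-\underline{g}(a),a)$; we may assume both are finite (the infinite cases are handled by the convention $m(0,x)=\infty$). The strategy is to Taylor expand each $f_j$ at $b$, substitute $\underline{X}=\underline{g}(\underline{Y})$, and then bound the multiplicity of each summand using the product rule. Minimizing over $j \in [k]$ will then yield (a), and (b) will follow from the automatic bound $M_2 \geq 1$.

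The key auxiliary statement is: for any $h_1,\dots,h_r \in R[\underline{t}]$ and $x \in R^n$,
\[ m(h_1 \cdots h_r, x) \geq \sum_{i=1}^r m(h_i,x). \]
To prove it, one first observes that $\{(\underline{t}-x)^J\}_{J \in \N^n}$ is a free $R$-module basis of $R[\underline{t}]$, since the translation $\underline{t} \mapsto \underline{t}+x$ is an $R$-algebra automorphism; hence by (\ref{TAYLOREXP}) the Taylor coefficients $D^J(\cdot)(x)$ are uniquely determined. Writing each $h_i = \sum_{|J_i| \geq m(h_i,x)} D^{J_i}(h_i)(x)(\underline{t}-x)^{J_i}$, multiplying the expansions out and collecting, every monomial $(\underline{t}-x)^K$ appearing in the product $h_1 \cdots h_r$ satisfies $|K| \geq \sum_i m(h_i,x)$. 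Uniqueness then forces $D^K(h_1 \cdots h_r)(x) = 0$ for all $|K| < \sum_i m(h_i,x)$, which is exactly the claimed bound.

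For part (a), fix $j \in [k]$ and apply (\ref{TAYLOREXP}) to $f_j$ at $b$:
\[ f_j(\underline{X}) = \sum_{|J| \geq M_1} D^J(f_j)(b)\,(\underline{X}-b)^J. \]
Substituting $\underline{X} = \underline{g}(\underline{Y})$ and using $b = \underline{g}(a)$ yields
\[ (f_j \circ \underline{g})(\underline{Y}) = \sum_{|J| \geq M_1} D^J(f_j)(b) \prod_{i=1}^n (g_i(\underline{Y})-g_i(a))^{j_i}. \]
Each factor $g_i - g_i(a)$ has multiplicity at least $M_2$ at $a$, so by the product rule the inner product has multiplicity at least $\sum_i j_i \cdot M_2 = |J| M_2 \geq M_1 M_2$ at $a$. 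The condition ``multiplicity at $a$ is at least $M_1 M_2$'' is the simultaneous vanishing of the Hasse derivatives $D^K(\cdot)(a)$ for $|K| < M_1 M_2$, which is an $R$-linear condition; hence it is preserved by the outer $R$-linear combination, and $m(f_j \circ \underline{g}, a) \geq M_1 M_2$. Taking the minimum over $j$ gives (a).

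Part (b) is then immediate: for each $i$ we have $(g_i - g_i(a))(a) = 0$, so $m(g_i - g_i(a),a) \geq 1$ (with $m(0,a)=\infty$), hence $M_2 \geq 1$ and (b) follows from (a). The only real obstacle is the product rule for multiplicities over a general ring: in the field setting this is often proven via univariate factoring, which is unavailable once zero divisors are present, but the Taylor-uniqueness argument above sidesteps this cleanly and works over any commutative ring, which is precisely why the result extends beyond the field setting of \cite{DKSS13}.
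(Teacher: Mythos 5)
The paper itself gives no proof of this proposition: it cites \cite[Prop.~2.5]{DKSS13} and relies on the blanket remark at the start of $\S$\ref{sec:mult} that the field-case arguments of \cite{DKSS13} ``carry over verbatim'' to a ring. So there is no in-paper proof to compare against; what you have supplied is a reconstruction, and it is a correct one. Your argument is the standard Taylor-expansion proof: expand each $f_j$ about $b=\underline{g}(a)$ using (\ref{TAYLOREXP}), substitute $\underline{X}=\underline{g}(\underline{Y})$, bound each summand via the product rule for multiplicities, and invoke $R$-linearity of the Hasse derivatives to pass the bound through the sum; part (b) then follows from $M_2\ge 1$. The one step that is not off-the-shelf over a ring is the product rule $m(h_1\cdots h_r,x)\ge\sum_i m(h_i,x)$, and your justification is right: since $\underline{t}\mapsto\underline{t}+x$ is an $R$-algebra automorphism, $\{(\underline{t}-x)^J\}_J$ is a free $R$-basis, so (\ref{TAYLOREXP}) gives a \emph{unique} expansion and multiplying truncated expansions shows the low-order Hasse coefficients of the product vanish --- no integral-domain hypothesis needed. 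This is a cleaner route than the paper's own single-variable Leibniz argument (Lemma~\ref{LEIBNIZLEMMA}), which is stated only for $n=1$ and would need a multivariate extension before it could be used here. One small bookkeeping point worth making explicit: when you ``assume both $M_1,M_2$ are finite,'' the case $M_2=\infty$ (i.e.\ $\underline{g}$ constant) with $M_1=0$ requires the convention $0\cdot\infty=0$ for the inequality to read correctly; it does hold, but the reader should see why.
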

%\begin{proof}
%The argument of \cite[Prop. 2.5]{DKSS13} carries over verbatim.
%a) Put $m_1 = m(\underline{f},\underline{g}(a))$ and $m_2 = %m(\underline{g}-
%\underline{g}(a),a)$, so $m_2 \geq 1$.  The result is clear if $m_1 = %0$,
%so we suppose $m_1 \geq 1$, i.e., $\underline{f}(\underline{g}(a)) = %0$.
%Let $\underline{Z} = (Z_1,\dots,Z_n)$ be a vector indeterminate, and %put
%\[h(\underline{Z}) = \sum_{|I| \geq m_2} \underline{g}^I(a) %\underline{Z}^I. \]
%Then
%\[ \underline{f}(\underline{g}(a+\underline{Z}) = %\underline{f}(\underline{g}(a)
%+ \sum_{ |I| \geq 1} \underline{g}^I(a) \underline{Z}^I) \] \[ =
%\underline{f}(\underline{g}(a) + \sum_{|I| \geq m_2} %\underline{g}^I(a) \underline{Z}^J) = %\underline{f}(\underline{g}(a+h(\underline{Z})) \]
%\[ = \underline{f}(\underline{g}(a)) + \sum_{|J| \geq 1} %\underline{f}^J(
%\underline{g}(a)) h(\underline{Z})^J = \sum_{|J| \geq m_1}
%\underline{f}^J(\underline{g}(a))h(\underline{Z})^J. \]
%Since each monomial term of $h$ was weight $|I| \geq m_1$ and each
%occurrence of $h(\underline{Z})$ in the last expression above
%is raised to the $J$ with $|J| \geq m_2$, it follows that
%\[ \underline{f}(\underline{g}(a+ \underline{Z})) = \sum_{|K| \geq m_1 %m_2}
%c_K \underline{Z}^K. \]
%Replacing $\underline{Z}$ with $\underline{Z}-a$ shows that
%\[ m(\underline{f} \circ \underline{g},a) \geq m_1 m_2 =
%m(\underline{f},\underline{g}(a)) m(\underline{g}-\underline{g}(a),a). %\]
%b) Since $m_2 \geq 1$, this follows from part a).
%\end{proof}

\begin{cor}[{\cite[Cor. 2.6]{DKSS13}}]
\label{DKSS2.6}
\label{0.7}
Let $f \in R[\underline{t}]$ and let $a,b \in R^n$.  Then for all $c \in R$ we have
\[ m(f(a+tb),c) \geq m(f,a+cb). \]
\end{cor}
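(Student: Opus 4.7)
The plan is to derive this corollary as a direct application of Proposition \ref{DKSS2.5}(b), with a carefully chosen composition. Specifically, I would introduce $\ell = 1$ and the single auxiliary indeterminate $Y_1 = t$, and then define
\[ \underline{g}(t) = (a_1 + tb_1, \ldots, a_n + tb_n) \in R[t]^n, \]
so that $\underline{g}$ is a vector of $n$ polynomials in the single variable $t$. Taking $k = 1$ and letting $\underline{f} = f \in R[X_1, \ldots, X_n]$, the composition $\underline{f} \circ \underline{g}$ is by definition
\[ (\underline{f} \circ \underline{g})(t) = f(a_1 + tb_1, \ldots, a_n + tb_n) = f(a + tb) \in R[t]. \]

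Now I would simply apply Proposition \ref{DKSS2.5}(b) at the point $c \in R$ (viewed as a point of $R^\ell = R^1$): this gives
\[ m(\underline{f} \circ \underline{g}, c) \geq m(\underline{f}, \underline{g}(c)). \]
The left-hand side is $m(f(a+tb), c)$ by the computation above, and the right-hand side is $m(f, a+cb)$ since $\underline{g}(c) = (a_1 + cb_1, \ldots, a_n + cb_n) = a + cb$. Combining these two identifications yields the desired inequality
\[ m(f(a+tb), c) \geq m(f, a + cb). \]

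There is no real obstacle here: the whole content of the corollary is that Proposition \ref{DKSS2.5}(b) applies to this particular affine substitution. The only minor thing to check is that the definition of composition in Proposition \ref{DKSS2.5} is compatible with our notation $f(a+tb)$, which is immediate from the definition $\underline{f} \circ \underline{g} := \underline{f}(g_1, \ldots, g_n)$. Since Proposition \ref{DKSS2.5} has already been stated (with its proof cited from \cite{DKSS13}) and requires no additional hypotheses beyond those we have, the corollary follows in one line once the substitution is identified.
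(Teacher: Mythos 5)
Your proof is correct and is precisely the argument used in \cite[Cor.\ 2.6]{DKSS13}, which the paper cites without reproducing (the authors note explicitly that where DKSS's proofs carry over verbatim to the ring setting, they state the result and omit the proof). Applying Proposition~\ref{DKSS2.5}(b) with $k=1$, $\ell=1$, and $\underline{g}(t)=a+tb$ is exactly the intended derivation, and your identification of $\underline{f}\circ\underline{g}$ with $f(a+tb)$ and of $\underline{g}(c)$ with $a+cb$ is what makes it a one-line corollary.
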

%\begin{proof}
%Apply Proposition \ref{DKSS2.5}b) with $m = \ell = 1$ and %$\underline{g} = a + tb \in R[t]^n$.
%\end{proof}

\begin{lem}
\label{lem:mult_Leibniz}
Let $f, g, h \in R[t]$ be nonzero polynomials such that $f = gh$ and let $x \in R$.
If $g(x)$ is not a zero divisor, then $m(f, x) = m(h, x)$. 
\end{lem}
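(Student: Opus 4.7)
The plan is to prove this as a direct application of the Leibniz rule (Lemma \ref{LEIBNIZLEMMA}), combined with the non-zero-divisor hypothesis on $g(x)$. Set $m = m(h,x)$, so by definition $D^k(h)(x) = 0$ for all $k < m$ while $D^m(h)(x) \neq 0$. The goal is to show that $D^i(f)(x) = 0$ for all $i < m$ and that $D^m(f)(x) \neq 0$.

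First I would handle the vanishing at orders below $m$. By Lemma \ref{LEIBNIZLEMMA},
\[ D^i(f)(x) = \sum_{j=0}^{i} D^j(g)(x)\, D^{i-j}(h)(x). \]
For $i < m$ and any $0 \leq j \leq i$, the index $i-j$ satisfies $i-j \leq i < m$, so $D^{i-j}(h)(x) = 0$. Hence every summand vanishes and $D^i(f)(x) = 0$, which shows $m(f,x) \geq m$.

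Next I would evaluate $D^m(f)(x)$ using the same Leibniz expansion. For every $j \geq 1$, the index $m-j$ is strictly less than $m$, forcing $D^{m-j}(h)(x) = 0$; only the $j = 0$ term survives, giving
\[ D^m(f)(x) = g(x)\, D^m(h)(x). \]
Here is where the non-zero-divisor hypothesis is essential: $D^m(h)(x) \neq 0$ by the definition of $m$, and since $g(x)$ is not a zero divisor, the product $g(x) D^m(h)(x)$ is nonzero. Therefore $m(f,x) = m = m(h,x)$.

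I do not anticipate any real obstacle here; the entire argument is a bookkeeping exercise once Lemma \ref{LEIBNIZLEMMA} is in hand. The one point that genuinely requires the hypothesis (rather than being purely formal) is the nonvanishing of $g(x) D^m(h)(x)$: over a general ring, without Condition (D)-style assumptions guaranteeing that $g(x)$ is regular, a product of two nonzero elements can be zero and the conclusion would fail.
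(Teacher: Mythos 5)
Your proof is correct and uses the same key ingredients as the paper: the Leibniz rule for Hasse derivatives (Lemma \ref{LEIBNIZLEMMA}) together with the hypothesis that $g(x)$ is not a zero divisor. The only difference is organizational — you set $m = m(h,x)$ and directly exhibit $D^m(f)(x) = g(x)D^m(h)(x) \neq 0$, while the paper sets $m = m(f,x)$ and argues step by step that $D^i(h)(x)=0$ for all $i<m$; the two routes amount to the same bookkeeping, and yours is if anything a touch more streamlined since it also subsumes the $m=0$ case without a separate remark.
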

 \begin{proof}
Let $m := m(f, x) \geq 1$ (for $m = 0$ the result is easily proved). By Lemma \ref{LEIBNIZLEMMA}, for all $i \in \N$ we have 
 \[D^i(f) = D^0(g)  D^i(h) + \dots + D^i(g) D^0(h).\]
Thus $D^i(f)(x) = 0$ for all $i < m(h,x)$ and $m(f,x) \leq m(h,x)$.   \\ \indent
We now show that $D^i(h)(x) = 0$ for all $i < m$, which will give $m(h, x) \geq m(f, x)$ and complete the proof. 
For $i = 0$, we have $0 = f(x) = g(x) h(x)$, and since $g(x)$ is not a zero divisor, we must have $h(x) = D^0(h)(x) = 0$. 
Now $D^1(f)(x) = D^0(g)(x)D^1(h)(x) + D^1(g)(x) D^0(h)(x) = g(x) D^1(h)(x)$. 
Again, since $g(x)$ is not a zero divisor, $D^1(f)(x) = 0$ if and only if $D^1(h)(x) = 0$. 
Continuing in this way, at the $i$th step we have $D^j(h)(x) = 0$ for all $j < i$ and thus $D^i(f)(x) = g(x) D^i(h)(x)$. 
Since $g(x)$ is not a zero divisor and $i < m$, we see that $D^i(h)(x) = 0$. 
 \end{proof}

\begin{lem}
\label{SOUPEDROOTFACTORLEMMA}
\label{lem:mult_single_ring}
Let $R$ be a ring, and let $f \in R[t]$ be a polynomial of degree $d \geq 1$.  
Let $A= \{x_1,\dots,x_n\} \subseteq R$ be a finite set satisfying Condition (D).  Then
\[ \sum_{x \in A} m(f,x) \leq d. \]
\end{lem}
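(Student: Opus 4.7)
The plan is to argue by induction on $n = \#A$, factoring $f$ repeatedly by linear terms $t - x_i$ and then controlling multiplicities at other points via Lemma \ref{lem:mult_Leibniz}.

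For the base case $n = 1$, I would repeatedly apply polynomial division by the monic polynomial $t - x_1$: the remainder is $f(x_1)$, so whenever the current quotient vanishes at $x_1$ we can factor out another copy of $t - x_1$. Since the degree drops at each step the process terminates with a factorization $f = (t - x_1)^k h$ where $h(x_1) \neq 0$ and $0 \leq k \leq d$. Applying the Leibniz rule (Lemma \ref{LEIBNIZLEMMA}) and evaluating at $x_1$, every term $D^j((t-x_1)^k)(x_1) D^{i-j}(h)(x_1)$ with $j < k$ vanishes because $D^j((t-x_1)^k)(x_1) = \binom{k}{j}(x_1-x_1)^{k-j} = 0$, while at $i = k$ the unique surviving term equals $h(x_1) \neq 0$. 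Hence $m(f,x_1) = k \leq d$.

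For the inductive step $n \geq 2$, I would pick any $x_1 \in A$, set $m_1 = m(f,x_1)$, and (if $m_1 \geq 1$) use the base-case procedure to write $f = (t - x_1)^{m_1} h$ with $\deg h = d - m_1$ and $h(x_1) \neq 0$. For $i \geq 2$, Condition (D) ensures $x_i - x_1$ is a non-zero-divisor; since the set of non-zero-divisors of $R$ is closed under multiplication, $(x_i - x_1)^{m_1}$ is also a non-zero-divisor. Lemma \ref{lem:mult_Leibniz}, applied with the factor $(t-x_1)^{m_1}$ in the role of $g$, then gives $m(f, x_i) = m(h, x_i)$ for every $i \geq 2$. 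Applying the inductive hypothesis to $h$ on $A' = A \setminus \{x_1\}$ yields $\sum_{i \geq 2} m(h, x_i) \leq \deg h = d - m_1$, so
\[ \sum_{x \in A} m(f,x) \;=\; m_1 + \sum_{i=2}^{n} m(h,x_i) \;\leq\; m_1 + (d - m_1) \;=\; d. \]
The case $m_1 = 0$ is immediate by applying the inductive hypothesis to $f$ itself on $A'$.

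The main obstacle is the absence of a field structure: we cannot freely divide or factor, and a priori nothing prevents $h(x_i)$ from being a zero-divisor, which would derail a naive induction on the degree of $h$. The crucial point is to peel off $(t - x_1)^{m_1}$ rather than some generic divisor — this yields a factor whose value at every other grid point is a product of non-zero-divisors, exactly the hypothesis needed to invoke Lemma \ref{lem:mult_Leibniz}. Condition (D) is used precisely at this step and nowhere else.
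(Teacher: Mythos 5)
Your proof is correct and follows essentially the same route as the paper's: induct on $n = \# A$, peel off the factor $(t-x_j)^{m(f,x_j)}$ at one chosen grid point, use Condition (D) to conclude that this factor evaluates to a non-zero-divisor at every remaining point, and invoke Lemma \ref{lem:mult_Leibniz} to transfer the multiplicities at those points to the quotient. The differences are cosmetic: you peel off $x_1$ rather than $x_n$; your base case derives the factorization $f = (t-x_1)^{m(f,x_1)} h$ with $h(x_1) \neq 0$ by explicit repeated division and the Leibniz rule, whereas the paper simply reads $(t-x_1)^{m(f,x_1)} \mid f$ off the Taylor expansion \eqref{TAYLOREXP}; and your case split at $m_1 = 0$ is unnecessary but harmless, since Lemma \ref{lem:mult_Leibniz} applies just as well when $g$ is the constant $1$.
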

 \begin{proof}
We will prove this by induction on $n$.  
For $n = 1$, we have that $(t - x_1)^{m(f, x_1)}$ divides $f$ and hence $\deg f \geq m(f, x_1)$. 
So suppose $n \geq 2$. 
Write $f = (t - x_n)^{m(f, x_n)} g$. 
Since $A$ satisfies Condition (D), the element $(x_i - x_n)^{m(f, x_i)}$ of $R$ is not a zero divisor for any $i \in [n-1]$. 
Therefore, by Lemma \ref{lem:mult_Leibniz} we have $m(f, x_i) = m(g, x_i)$ for all $i \in [n-1]$. 
By induction hypothesis we get \[\sum_{i=1}^{n-1} m(f,x_i) = \sum_{i = 1}^{n - 1}m(g, x_i)  \leq \deg g = \deg f - m(f_, x_n), \] from which the result follows. 
 \end{proof}

\begin{remark}
An earlier draft of this work contained a different proof of Lemma \ref{SOUPEDROOTFACTORLEMMA}.  In place of 
Lemma \ref{lem:mult_Leibniz}, we used the following: if $f = \prod_{i=1}^m (t-x_i), \ g = \prod_{j=1}^m (t-y_j) \in R[t]$ 
and $x_i - y_j \in R^{\times}$ for all $i$ and $j$, then $f$ and $g$ generate the unit ideal of $R[t]$.  This was proved using some commutative algebra (localization and Nakayama's Lemma) and was thus a bit of a departure from the rest of 
the paper.  The interested reader can find the details in \url{https://arxiv.org/pdf/1508.06020v1.pdf}.
\end{remark}

\begin{lemma}[DKSS Lemma]
\label{DKSS}
Let $A = \prod_{i=1}^n A_i \subset R^n$ be a finite subset satisfying
Condition (D).  Let $f \in R[\underline{t}]$, and write
\[ f= \sum_{j=0}^{d_n} f_j(t_1,\dots,t_{n-1}) t_n^j \]
with $f_{d_n} \neq 0$.  Put $A' = \prod_{i=1}^{n-1} A_i$.
For all $x' = (x_1,\dots,x_{n-1}) \in A'$, we have
\[ \sum_{x \in A_n} m(f,(x',x)) \leq (\# A_n) m(f_{d_n},x') + d_n. \]
\end{lemma}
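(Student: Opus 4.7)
The plan is to reduce to the single-variable root-counting lemma (Lemma \ref{SOUPEDROOTFACTORLEMMA}) applied to a suitably chosen one-variable slice of $f$, after first differentiating in the first $n-1$ variables just enough times to ensure the slice still has degree $d_n$ in $t_n$.

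\textbf{Step 1 (Choosing the right Hasse derivative).} Set $m := m(f_{d_n}, x')$. By definition of multiplicity, $m$ is finite (since $f_{d_n} \neq 0$) and there exists some multi-index $J_0 \in \mathbb{N}^{n-1}$ with $|J_0| = m$ for which $D^{J_0}(f_{d_n})(x') \neq 0$. Fix such a $J_0$. Define the univariate polynomial
\[ g(t_n) := D^{(J_0,0)}(f)(x', t_n) \in R[t_n]. \]
Writing $D^{(J_0,0)}(f) = \sum_{j=0}^{d_n} D^{J_0}(f_j)(t') \, t_n^j$ and substituting $t' = x'$, the coefficient of $t_n^{d_n}$ in $g$ is $D^{J_0}(f_{d_n})(x') \neq 0$, so $\deg g = d_n$.

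\textbf{Step 2 (Applying the univariate bound).} Since $A_n$ satisfies Condition (D), Lemma \ref{SOUPEDROOTFACTORLEMMA} applied to the nonzero polynomial $g$ of degree $d_n$ yields
\[ \sum_{x \in A_n} m(g, x) \leq d_n. \]

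\textbf{Step 3 (Relating $m(g,x)$ back to $m(f,(x',x))$).} For each $x \in A_n$, combine Lemma \ref{0.5} and Corollary \ref{DKSS2.6} as follows. Lemma \ref{0.5} gives $m(D^{(J_0,0)}(f), (x', x)) \geq m(f, (x', x)) - |J_0| = m(f, (x', x)) - m$. Next, Corollary \ref{DKSS2.6}, applied with $F = D^{(J_0,0)}(f)$, $a = (x',0)$, $b = (0,\ldots,0,1)$ and $c = x$ so that $F(a + tb) = g(t)$ and $a + cb = (x', x)$, gives $m(g, x) \geq m(D^{(J_0,0)}(f), (x', x))$. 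Chaining these inequalities,
\[ m(f,(x',x)) \leq m(g,x) + m. \]

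\textbf{Step 4 (Summation).} Summing over $x \in A_n$ and invoking the bound from Step 2,
\[ \sum_{x \in A_n} m(f,(x',x)) \leq \sum_{x \in A_n} m(g,x) + m \cdot \#A_n \leq d_n + (\#A_n) \, m(f_{d_n}, x'), \]
which is the asserted inequality. The main subtlety lies in Step 1: one must take a Hasse derivative in the first $n-1$ variables of order exactly equal to $m(f_{d_n}, x')$, chosen so that the leading coefficient in $t_n$ survives the evaluation at $x'$. This preserves $\deg_{t_n}$ as $d_n$ and is precisely what makes the otherwise degenerate case (where $f(x', t_n)$ itself may be zero) tractable.
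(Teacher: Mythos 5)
Your proof is correct and follows the same strategy as the paper's: choose a Hasse derivative in the first $n-1$ variables of order exactly $m(f_{d_n},x')$ so that specialization at $x'$ leaves a degree-$d_n$ univariate polynomial, then combine Lemma~\ref{0.5}, Corollary~\ref{DKSS2.6}, and the univariate bound of Lemma~\ref{SOUPEDROOTFACTORLEMMA}. The only differences are notational ($J_0$ for $I'$, $g$ for $D^I(f)(x',t_n)$) and that you are slightly more explicit about why $\deg g = d_n$.
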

\begin{proof}
Choose $I' \in \N^{n-1}$ such that $|I'| = m(f_{d_n},x')$ and
$D^I(f_{d_n})(x') \neq 0$.  Put $I = I' \times \{0\} \in \N^n$.  Then
\[ D^I(f) = \sum_{j=0}^{d_n} D^{I'}(f_j) t_n^j, \]
so $D^I f \neq 0$.
By Lemma \ref{0.5}, we have
\[ m(f,(x',x)) \leq |I| + m(D^I(f), (x',x)) = m(f_{d_n},x') + m(D^I(f),(x',x)). \]
Apply Corollary \ref{0.7} to $D^I(f)$ with $a = (x',0)$, $b = (0,1)$ and
$c = x$: we get
\[ m(D^I(f),(x',x)) \leq  m(D^I (f)(x',t_n),x). \]
Summing over $x \in A_n$ gives
\[ \sum_{x \in A_n} m(f,(x',x)) \leq (\# A_n) m(f_{d_n},x') +
\sum_{x \in A_n} m(D^I(f)(x',t_n),x). \]
Since $I = I' \times \{0\}$, $D^I(f)(x',t_n)$
has degree $d_n$ and thus Lemma \ref{SOUPEDROOTFACTORLEMMA} gives
\[ \sum_{x \in A_n} m(D^I(f)(x',t_n),x) \leq d_n. \]
The result follows.
\end{proof}

\begin{remark}
The case of Lemma \ref{DKSS} in which $R$ is a field and
$A_1 = \dots = A_n$ is due to Dvir, Kopparty, Saraf and
Sudan \cite[pp. 8-9]{DKSS13}.  Our proof follows theirs very closely,
but uses Lemma \ref{SOUPEDROOTFACTORLEMMA} in place
of the root-factor phenomenon.
\end{remark}

\subsection{Multiplicity Enhanced Schwartz Theorem}
\label{sec:mult_Schwartz}
\begin{thm}[Multiplicity Enhanced Schwartz Theorem]
\label{MULTISCHWARTZ}
Let $R$ be a ring, let $A = \prod_{i=1}^n A_i \subset R^n$ be finite, nonempty
and satisfy Condition (D), and let $f = f_n \in F[t_1,\dots,t_n]$ be a
nonzero polynomial.  Let $d_n = \deg_{t_n} f$, and
let $f_{n-1} \in R[t_1,\dots,t_{n-1}]$ be the coefficient of $t_n^{d_n}$
in $f_n$.  Let $d_{n-1} = \deg_{t_{n-1}} f_{n-1}$, and let $f_{n-2} \in R[t_1,\dots,t_{n-2}]$
be the coefficient of $t_{n-2}^{d_{n-2}}$ in $f_{n-2}$.  Continuing in this
manner we define for all $1 \leq i \leq n$ a polynomial
$f_i \in R[t_i,\dots,t_n]$ with $\deg_{t_i} f_i = d_i$.   Then
\[ \sum_{x \in A} m(f,x) \leq \# A\sum_{i=1}^n \frac{d_i}{\# A_i}. \]
\end{thm}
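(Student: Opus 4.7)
The plan is to proceed by induction on $n$, exactly mirroring the structure of the proof of Schwartz's Theorem (Theorem \ref{SCHWARTZ}) but with the univariate root-factor bound replaced by the multiplicity-enhanced version already established in Lemma \ref{SOUPEDROOTFACTORLEMMA}, and with the outer ``partition on the last coordinate'' step replaced by the DKSS Lemma (Lemma \ref{DKSS}), which was set up precisely for this purpose.

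For the \textbf{base case} $n=1$, the recursive definition gives $f_1 = f$ and $d_1 = \deg f$. Since $A_1$ satisfies Condition (D), Lemma \ref{SOUPEDROOTFACTORLEMMA} yields $\sum_{x\in A_1} m(f,x) \leq d_1$, which equals $\# A \cdot d_1/\# A_1$, as required.

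For the \textbf{inductive step}, write $f = \sum_{j=0}^{d_n} f_j(t_1,\dots,t_{n-1}) t_n^j$ with $f_{d_n} = f_{n-1} \neq 0$ (nonzero because $d_n = \deg_{t_n} f$). Set $A' = \prod_{i=1}^{n-1} A_i$. For each $x' \in A'$, Lemma \ref{DKSS} gives
\[ \sum_{x \in A_n} m(f,(x',x)) \leq (\# A_n)\, m(f_{n-1},x') + d_n. \]
Summing over $x' \in A'$ and applying the inductive hypothesis to $f_{n-1} \in R[t_1,\dots,t_{n-1}]$ over the grid $A'$ (noting that the sequence $f_{n-1}, f_{n-2}, \dots, f_1$ and the degrees $d_{n-1},\dots,d_1$ produced by the recursion for $f_{n-1}$ coincide with those for $f$, and that Condition (D) is inherited by $A'$), I obtain
\[ \sum_{x' \in A'} m(f_{n-1},x') \leq \# A' \sum_{i=1}^{n-1} \frac{d_i}{\# A_i}. \]
Combining the two displays and using $\#A = \#A' \cdot \#A_n$ gives
\[ \sum_{x \in A} m(f,x) \;\leq\; \#A \sum_{i=1}^{n-1} \frac{d_i}{\#A_i} \;+\; d_n \cdot \#A' \;=\; \#A \sum_{i=1}^n \frac{d_i}{\#A_i}. \]

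Since both key ingredients -- the single-variable Condition (D) bound and the DKSS Lemma -- are already in place, there is no real obstacle beyond bookkeeping. The only point that needs care is ensuring the recursion for $f$ restricts cleanly to the recursion for $f_{n-1}$ so that the induction hypothesis applies with exactly the same constants $d_1,\dots,d_{n-1}$; this is immediate from the definitions, since the process that produces $f_{n-2},\dots,f_1$ from $f_{n-1}$ is by construction identical whether we start from $f$ or from $f_{n-1}$.
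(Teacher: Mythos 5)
Your proof is correct and is essentially identical to the paper's own argument: induction on $n$ with Lemma \ref{SOUPEDROOTFACTORLEMMA} as the base case, Lemma \ref{DKSS} providing the key per-$x'$ bound, and the inductive hypothesis applied to $f_{n-1}$ on $A'$. The paper condenses the bookkeeping into a two-line chain of inequalities, but the structure and ingredients are the same.
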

\begin{proof}
By induction on $n$.  The case $n = 1$ is Lemma \ref{SOUPEDROOTFACTORLEMMA}.
Suppose the result holds for polynomials in $n-1$ variables.  Let $A' = \prod_{i=1}^{n-1} A_i$.  Applying Lemma \ref{DKSS} and then the induction hypothesis, we get
\[ \sum_{x \in A} m(f,x) = \sum_{x' \in A'} \sum_{x \in A_n} m(f,(x',x))
\leq \# A_n \sum_{x' \in A'}  m(f_{n-1},x') + \# A' d_n \]
\[ \leq \# A_n \# A' \sum_{i=1}^{n-1} \frac{d_i}{\# A_i} + \# A \frac{d_n}{\# A_n} =
\# A \sum_{i=1}^n \frac{d_i}{\# A_i}. \qedhere\]
\end{proof}

\begin{thm}[Multiplicity Enhanced Generalized Schwartz--Zippel Lemma]
Let $A = \prod_{i=1}^n A_i \subset R^n$ be a finite grid satisfying Condition (D), and suppose $\# A_1 \geq \dots \geq \# A_n$.  Let $f \in R[\underline{t}] = R[t_1,\dots,t_n]$ be a nonzero polynomial.  Then
\[ \sum_{x \in A} m(f,x) \leq \deg f \prod_{i=1}^{n-1} \# A_i. \]
\end{thm}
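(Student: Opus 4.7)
The plan is to derive this bound directly from the Multiplicity Enhanced Schwartz Theorem (Theorem \ref{MULTISCHWARTZ}), following the same reduction used to deduce the (non-enhanced) Generalized Schwartz--Zippel Lemma from Theorem \ref{SCHWARTZ}. The only new ingredient compared to the non-enhanced version is that $\#Z_A(f)$ is replaced throughout by $\sum_{x\in A} m(f,x)$; this substitution is legitimate precisely because Theorem \ref{MULTISCHWARTZ} already delivers the multiplicity-weighted bound with the same numerics.

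First I would form the iterated leading-coefficient polynomials $f = f_n, f_{n-1}, \dots, f_1$ as in the statement of Theorem \ref{MULTISCHWARTZ}, and set $d_i = \deg_{t_i} f_i$. By construction the monomial $t_1^{d_1} t_2^{d_2} \cdots t_n^{d_n}$ appears in $f$ with a nonzero coefficient, which gives the key inequality
\[ \sum_{i=1}^n d_i \;\leq\; \deg f. \]
Applying Theorem \ref{MULTISCHWARTZ} then yields
\[ \sum_{x\in A} m(f,x) \;\leq\; \#A \sum_{i=1}^n \frac{d_i}{\#A_i} \;=\; \sum_{i=1}^n d_i \prod_{j\neq i} \#A_j. \]
The hypothesis $\#A_1 \geq \cdots \geq \#A_n$ forces $\prod_{j\neq i}\#A_j \leq \prod_{j=1}^{n-1}\#A_j$ for every $i \in [n]$, since the largest such product is obtained by dropping the smallest factor $\#A_n$. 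Combining this with the previous display and the bound $\sum_i d_i \leq \deg f$ delivers exactly the claimed inequality $\sum_{x\in A} m(f,x) \leq (\deg f)\prod_{i=1}^{n-1}\#A_i$.

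I do not anticipate any real obstacle: all the hard work has already been done in Lemma \ref{SOUPEDROOTFACTORLEMMA}, Lemma \ref{DKSS} and Theorem \ref{MULTISCHWARTZ}, and the present statement is a purely formal corollary obtained by rewriting the Schwartz-type bound in a looser form that depends only on $\deg f$ and the sizes $\#A_i$. The one minor thing to verify along the way is that the iterated leading coefficients $f_{n-1}, \ldots, f_1$ are each nonzero, but this is automatic from the construction (the leading coefficient of a nonzero polynomial in one of its variables is nonzero).
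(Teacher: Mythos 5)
Your proposal is correct and matches the paper's approach: the paper itself says only that the theorem ``follows from Theorem \ref{MULTISCHWARTZ} as Theorem \ref{LOWDEGREEAFTHM} does from Theorem \ref{SCHWARTZ},'' and the underlying chain of inequalities is exactly the one you spelled out, namely $\sum_{x\in A} m(f,x) \leq \#A \sum_i d_i/\#A_i \leq \bigl(\prod_{i=1}^{n-1}\#A_i\bigr)\sum_i d_i \leq (\deg f)\prod_{i=1}^{n-1}\#A_i$. No gaps.
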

\begin{proof}
This follows from Theorem \ref{MULTISCHWARTZ} as Theorem
\ref{LOWDEGREEAFTHM} does from Theorem \ref{SCHWARTZ}.
\end{proof}

\begin{remark}
\begin{enumerate}[$(a)$]
\item When $R$ is a field, Theorem \ref{MULTISCHWARTZ} was proved by Geil and Thomsen \cite[Thm. 5]{Geil-Thomsen13}.  They also build closely on \cite{DKSS13}. 
\item Unlike most of the other results presented here, Theorem \ref{MULTISCHWARTZ} is \emph{not} claimed to be sharp in all cases.  In fact, it is not always sharp, and Geil and Thomsen give significant discussion of this point including an algorithm which sometimes leads to an improved bound \cite[Thm. 6]{Geil-Thomsen13} and further numerical exploration.
\end{enumerate}
\end{remark}

\subsection{A Counterexample}
\label{sec:counterexample}
It is natural to ask whether Alon--F\"uredi holds in multiplicity enhanced form, i.e., whether the bound
\[ \# Z_A(f) \leq \# A - \mm(\# A_1,\dots,\# A_n;\sum_{i=1}^n \# A_i - \deg f) \]
could be improved to
\[ \sum_{x \in A} m(f,x) \leq \# A - \mm(\# A_1,\dots,\# A_n;\sum_{i=1}^n \# A_i - \deg f). \]
The following example shows that such an improvement does not always hold.

\begin{example}
\label{EXAMPLE3}
Let $n = 2$ and $R = A_1 = A_2 = \F_q$.
Let $d_1,d_2 \in \Z^+$ be such that $d_1, d_2 < q \leq d_1 + d_2 + 1$. Then $f = t_1^{d_1} t_2^{d_2}$ is $A$-reduced, and we have
\[ \sum_{x \in A} m(f,x) = q d_1 + q d_2 > q^2 - 2q + d_1 + d_2 + 1 =
q^2 - \mathfrak{m}(q,q;2q - d_1 - d_2). \]
\end{example}
\noindent
Notice that the polynomial $f = t_1^{d_1} t_2^{d_2}$ is polylinear (see $\S$\ref{sec:Petrov}).  So far as we know it may be true that $\sum_{x \in A} m(f,x)$ is maximized among all polynomials of fixed degree when $f$ is a polylinear polynomial.  We leave this as an open problem.

\end{document}